\numberwithin{equation}{section}
\theoremstyle{plain}
\newtheorem{theorem}{Theorem}[section]
\newtheorem{proposition}[theorem]{Proposition}
\newtheorem{lemma}[theorem]{Lemma}
\newtheorem{corollary}[theorem]{Corollary}
\theoremstyle{remark}
\newtheorem{definition}[theorem]{Definition}
\newtheorem{remark}[theorem]{Remark}
\DeclareMathOperator{\Var}{Var}
\newcommand{\bR}{\mathbb{R}}
\newcommand{\bN}{\mathbb{N}}
\newcommand{\bE}{\mathbb{E}}
\newcommand{\bP}{\mathbb{P}}
\newcommand{\cF}{\mathcal{F}}
\newcommand{\HS}{\mathrm{HS}}
\newcommand{\SG}{\mathrm{SG}}
\newcommand{\tY}{\tilde{Y}}
\newcommand{\tR}{\tilde{R}}
\newcommand{\tB}{\tilde{B}}
\newcommand{\hR}{\hat{R}}
\newcommand{\hB}{\hat{B}}
\newcommand{\hC}{\hat{C}}
\newcommand{\bfY}{\mathbf{Y}}
\newcommand{\bfX}{\mathbf{X}}
\newcommand{\bfa}{\mathbf{a}}
\newcommand{\bfb}{\mathbf{b}}
\newcommand{\brR}{\bar{R}}
\newcommand{\ve}{\varepsilon}
\newcommand{\g}{\gamma}
\newcommand{\la}{\lambda}
\newcommand{\bs}{\bar{\sigma}_n}
\newcommand{\sn}{{\sigma}_n}
\renewcommand{\a}{\alpha}
\renewcommand{\b}{\beta}
\renewcommand{\d}{\delta}
\newcommand{\w}{\omega}
\newcommand{\e}{\varepsilon}
\newcommand{\PP}{\mathbb{P}}
\newcommand{\EE}{\mathbb{E}}
\newcommand{\RR}{\mathbb{R}}
\newcommand{\CC}{\mathbb{C}}
\newcommand{\cD}{\mathcal{D}}
\newcommand{\bfafront}{\mathbf{a}^{\text{(l)}}}
\newcommand{\bfaback}{\mathbf{a}^{\text{(f)}}}
\newcommand{\beq}{ \begin{equation} }
\newcommand{\eeq}{ \end{equation} }
\newcommand{\beqq}{ \begin{equation*} }
\newcommand{\eeqq}{ \end{equation*} }
\newcommand{\D}{\Delta_n}
\begin{document}

	\title{An edge CLT for the log determinant of Laguerre beta ensembles}
	\author{Elizabeth W. Collins-Woodfin\footnote{Department of Mathematics \& Statistics, McGill University,
Montreal, QC, H3A 0G4, Canada \newline email: \texttt{elizabeth.collins-woodfin@mail.mcgill.ca}} \and Han Gia Le\footnote{Department of Mathematics, University of Michigan,
Ann Arbor, MI, 48109, USA \newline email: \texttt{hanle@umich.edu}}}
	\date{\today}

	\maketitle

\begin{abstract}
We obtain a CLT for $\log|\det(M_n-s_n)|$ where $M_n$ is a scaled Laguerre beta ensemble and $s_n=d_++\sigma_n n^{-2/3}$ with $d_+$ denoting the upper edge of the limiting spectrum of $M_n$ and $\sigma_n$ a slowly growing function ($\log\log^2 n\ll\sigma_n\ll\log^2 n$).  In the special cases of LUE and LOE, we prove that the CLT also holds for $\sigma_n$ of constant order.  A similar result was proved for Wigner matrices by Johnstone, Klochkov, Onatski, and Pavlyshyn.  Obtaining this type of CLT of Laguerre matrices is of interest for statistical testing of critically spiked sample covariance matrices as well as free energy of bipartite spherical spin glasses at critical temperature.
\end{abstract}

	\section{Introduction}
	\subsection{Background}
	As one of the most fundamental quantities in the study of matrices, determinants have been well studied in random matrix theory and there is a natural interest in how these determinants behave asymptotically as the size of the matrix grows.  More specifically, a number of studies of the past decade have studied the log determinant, $\log|\det(M_n)|$, for various random matrix ensembles, $M_n$, and have established CLT results for this quantity as $n\to\infty$.  See papers by Nguyen and Vu for results non-Hermitian i.i.d. matrices \cite{Nguyen_2014} and Tao and Vu for results on Wigner matrices \cite{TaoVu2012}.
	
	It is also of interest to study a log determinant away from the origin (i.e. $\log|\det(M_n-s)|$ for $s\ne0$).  We note that this quantity can also be written as $\sum_{i=1}^n\log|\la_i-s|$ where $\{\la_i\}_{i=1}^n$ are the eigenvalues of $M_n$.  For $s$ outside the spectrum of $M_n$, this is a special case of the well-studied linear spectral statistics, i.e. $\sum_{i=1}^n f(\lambda_i)$ where $f$ is a smooth function on the support of the spectrum of $M_n$.  Johansson proved a CLT for linear spectral statistics of Gaussian beta ensembles (with some generalization to other random matrices) \cite{Johansson_linstat} and Bai and Silverstein proved a similar result for Laguerre beta ensembles \cite{BaiSilverstein}.
	
	Recently, Johnstone, Klochkov, Onatski, and Pavlyshyn \cite{JKOP1} considered a case in which $M_n$ is a scaled Wigner ensemble (or Gaussian beta ensemble) and $s$ is close to edge of the spectrum of $M_n$ and approaches the edge as $n\to\infty$.  This is not covered by the studies of linear statistics, since $\sum_{i=1}^n\log|\lambda_i-s|$ is singular for $s$ at the edge of the spectrum.  This work was motivated by high dimensional statistical testing and spin glasses.  Johnstone et al derived a CLT for this case (see also a related result by Lambert and Paquette \cite{lambertpaquette}). The goal of this paper is to derive an analogous result to \cite{JKOP1} in the case where the matrix is from a Laguerre beta ensemble.
	\vspace{0.1in}
	
	\noindent\textbf{Laguerre beta ensembles:}
	By Laguerre beta ensemble (L$\b$E), we mean an $n\times n$ random matrix $M_{n,m}$ with joint eigenvalue density
\beq
p(\lambda_1,\lambda_2,...,\lambda_n)=C_{n,m,\beta}\prod_{i<j}|\lambda_i-\lambda_j|^\beta \prod_{i=1}^n \lambda_i^{\frac\beta2 (m-n+1)-1}e^{-\lambda_i/2},
\eeq
where $m\geq n$ and $\beta>0$ and $C_{n,m,\b}$ is the corresponding normalization constant. The cases of $\b=1$ and $\b=2$ correspond to the Laguerre Orthogonal Ensemble (LOE) and the Laguerre Unitary Ensemble (LUE) respectively, which can be constructed by setting $M_{n}:=AA^*$ where $A$ is taken to be an $n\times m$ matrix with i.i.d. entries that are real Gaussian (LOE) or complex Gaussian (LUE) with mean 0 and variance 1.  We fix a parameter $\lambda$ and take $n,m\to\infty$ such that their ratio converges to $\lambda$.  More specifically, we require 
\beq
\frac{n}{m}=\lambda+O(n^{-1}),\quad 0<\lambda\leq1.
\eeq
	Let $\mu_1\geq\mu_2\geq\cdots\mu_n\geq0$ denote the eigenvalues of the scaled L$\b$E matrix $\frac1m M_{n,m}$.  It was shown by Mar{\v{c}}enko and Pastur (for $\b=1$) \cite{Marcenko_1967} and by Dumitriu and Edelman (for general $\b>0$) \cite{DumitriuEdelman} that, as $n,m\to\infty$ with $n/m\to\la\leq1$,
	\beq\label{eq:def_MP}
	\frac1n \sum_{i=1}^n\delta_{\mu_i}\to \frac{\sqrt{(d_+-x)(x-d_-)}}{2\pi\la x}\mathbf{1}_{[d_-,d_+]},
	\eeq
	where the convergence is weakly in distribution and $d_\pm=(1\pm\la^{1/2})^2$.  
	
	Of particular importance for our purposes is the behavior of the largest eigenvalue.  As $n\to\infty$, this eigenvalue approaches the constant $d_+$ and displays Tracy-Widom type fluctuations of order $n^{2/3}$ about $d_+$ (see \cite{Ram_rez_2011} for the  general $\b$ case):
	\beq
	C_{\lambda,\beta} (\mu_1-d_+)n^{2/3} \to  TW_\beta,
	\eeq
	where the arrow denotes convergence in distribution, $d_+$ is as defined above, $C_{\lambda,\beta}$ is a constant, and $TW_\b$ is the $\b$ version of the Tracy-Widom distribution.
	
		\vspace{0.1in}
	
	\noindent\textbf{Motivation and recent related research:}
	In this paper we derive a CLT for the log determinant of L$\b$E matrices near the edge of the spectrum.  More precisely, we study $\log|\det(M_{n,m}/m-\gamma)|$ where $\gamma:=d_++\sigma_n n^{-2/3}$ for $\sigma_n$ satisfying $-\tau < \sigma_n\ll (\log n)^2$ for some fixed $\tau>0$.  The motivation for this research question is two-fold, with applications in both statistics and spin glasses.
	
	In high dimensional statistics, there is much interest in hypothesis testing for spiked models, i.e. matrices of the form $M_n+h\mathbf{x}\mathbf{x}^*$ where $M_n$ is a random matrix, $h$ is a scalar, and $\mathbf{x}$ is a vector giving the direction of the spike (see, e.g. \cite{JohnstoneOnatskiSpiked}).  Laguerre beta ensembles are of particular interest in this context because of their connection to sample covariance matrices.  The log determinant near the edge of the spectrum is useful in detecting the presence of a spike when $h$ is small. Johnstone et al derive a CLT similar to ours for Gaussian beta ensembles (G$\b$E), which they also extend to Wigner ensembles with certain moment restrictions \cite{JKOP1}.  They used G$\b$E as a proxy for L$\b$E because they behave similarly but are less messy to analyze.  Our paper confirms that, indeed, the CLT of the log determinant near the spectral edge of a L$\b$E matrix closely resembles that of a Wigner matrix, up to differences in the values of certain constants in the CLT formulas.  Furthermore, in calculating these constants, we are able to make explicit the dependence of the CLT formula for L$\b$E on the parameter $\lambda$.
	

Gaussian beta ensembles were also studied in this context by Lambert and Paquette \cite{lambertpaquette}, but via a different method.  They prove that a rescaled version of the characteristic polynomial converges to a random function that can be characterized as a solution to the Stochastic Airy equation.  From this convergence result, they obtain the CLT for the log determinant near the edge as a corollary.
	
In addition to the statistical motivation, this paper relates to questions of interest in spin glasses. Johnstone et al \cite{JKOP2} and Landon \cite{Landon_crit} observe that the quantity $\log|\det(M_n-s)|$ (with $M_n$ being a scaled GOE matrix) appears in the calculations of the free energy of the spherical Sherrington-Kirkpatrick (SSK) spin glass model.  Baik and Lee \cite{BaikLeeSSK} showed in 2016 that the asymptotic fluctuations of the SSK free energy are Gaussian at high temperature but Tracy-Widom at low temperature.  However, the nature of the free energy fluctuations near the critical temperature remained an open question, requiring a more detailed analysis of $\log|\det(M_n-s)|$ in the case where $s$ is near the spectral edge.  The papers \cite{JKOP2},\cite{Landon_crit} analyze this critical case.  Building on the findings of \cite{JKOP1} and \cite{lambertpaquette}, they provide a free energy formula for SSK near the critical temperature that interpolates between the high temperature and low temperature cases.
	
	Just as the edge CLT for the log determinant of GOE was needed to analyze the free energy of SSK at critical temperature, our result for Laguerre ensembles provides a necessary piece of information for the analysis of bipartite spherical spin glasses.  As with the SSK model, the free energy of bipartite spherical spin glasses exhibits Gaussian fluctuations at high temperature and Tracy-Widom fluctuations at low temperature \cite{BaikLeeBipartite}.  Our paper provides a key tool to analyze the critical temperature setting, which we will address in a subsequent paper.

	
	\subsection{Main result}
	Our contribution consists of two related Central Limit Theorems.  Theorem \ref{thm:mainresult} holds for general Laguerre beta ensembles and provides a CLT for the log determinant evaluated at a distance of $\sigma_n n^{-2/3}$ above the spectral edge where $\sigma_n$ is a slowly growing function (e.g. $\log n$).  Theorem \ref{thm:mainresult2} extends this CLT all the way to the spectral edge in the cases of LUE and LOE.
	
	\begin{theorem}[CLT slightly away from the edge]\label{thm:mainresult}
	Let $M_{n,m}$ be a L$\b$E matrix where $n\leq m$ and $n/m=\la+O(n^{-1})$ as $n,m\to\infty$ for some $0<\la\leq1$.  Define $\a=2/\b$. Let $\cD_n=\det(M_{n,m}/m-\gamma)$ where $\gamma=d_++\sigma_n n^{-2/3}$ with $(\log\log n)^2\ll \sigma_n\ll (\log n)^2$ and $d_+$ denotes the upper edge of the limiting spectral distribution of $\frac1m M_{n,m}$.  Then,
	\beq
	\frac{\log|\cD_n|-C_\la n - \frac{1}{\la^{1/2}(1+\la^{1/2})}\sigma_n n^{1/3}
+\frac{2}{3\la^{3/4}(1+\la^{1/2})^2}\sigma_n^{3/2} +\frac16\left(\a-1\right) \log n}{\sqrt{\frac\a3 \log n}}\to \mathcal{N}(0,1),
	\eeq
	where
	\beq
	C_\la:=(1-\la^{-1})\log(1+\la^{1/2})+\log(\la^{1/2})+\la^{-1/2}.
	\eeq
	\end{theorem}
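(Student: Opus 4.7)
The plan is to adapt the transfer-matrix / tridiagonal approach of \cite{JKOP1} to the Laguerre setting, using the Dumitriu-Edelman bidiagonal model \cite{DumitriuEdelman}. Under that representation $M_{n,m}$ has the same spectrum as $BB^T$ where $B$ is bidiagonal with independent chi-distributed entries whose parameters are determined by $(n,m,\b)$, so $\cD_n$ is the value at $\g$ of the characteristic polynomial of an explicit tridiagonal matrix. Writing $p_k(\g)$ for the characteristic polynomial of the $k\times k$ leading principal submatrix of $M_{n,m}/m$ evaluated at $\g$ and applying the standard three-term recursion for tridiagonal determinants, one gets $\log|\cD_n|=\sum_{k=1}^n\log|R_k|$ with $R_k=p_k(\g)/p_{k-1}(\g)$ satisfying a one-step random recursion $R_k=(\g-A_k)-B_{k-1}/R_{k-1}$, where $A_k,B_k$ are explicit independent functions of the chi-square variables.

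I would then carry out a two-scale asymptotic analysis of this sum, splitting at an intermediate index $k^*=n-\Delta_n$ with $\Delta_n$ on the order of $n^{1/3}\sigma_n^{1/2}$, the natural matching scale for a shift $\g-d_+=\sigma_n n^{-2/3}$ above a square-root edge. In the bulk $k\le k^*$ the recursion concentrates on a deterministic profile $\bar R(k/n)$ obtained by freezing the coefficients at their means; writing $R_k=\bar R(k/n)+E_k$ and expanding $\log|R_k|$ to first order in $E_k$ gives three types of contributions. The sum of $\log|\bar R(k/n)|$ produces the leading linear term $C_\la n$ at $\g=d_+$ together with the $\sigma_n$-dependent deterministic shifts $\frac{1}{\la^{1/2}(1+\la^{1/2})}\sigma_n n^{1/3}-\frac{2}{3\la^{3/4}(1+\la^{1/2})^2}\sigma_n^{3/2}$, obtained by Taylor expanding $\int\log|x-\g|\,d\mu(x)$ in the small parameter $\sigma_n n^{-2/3}$, where $\mu$ is the Marchenko-Pastur limit; the half-integer power reflects the square-root vanishing of $\mu$ at $d_+$. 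The first-order term $E_k/\bar R(k/n)$ becomes a martingale whose quadratic variation, using $\mathrm{Var}(\log\chi^2_\nu)\sim 2/\nu$, sums to $\frac{\a}{3}\log n$ and delivers the Gaussian fluctuation via a martingale CLT, while the mean of $E_k/\bar R(k/n)$ combined with the bias from $\EE\log\chi^2_\nu$ contributes at the $\log n$ scale to the bulk mean.

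The edge region $k>k^*$ is treated by rescaling the recursion so that $R_k$ becomes a perturbation of a Riccati-type discretization of the stochastic Airy operator, in the spirit of \cite{Ram_rez_2011}. Because $\sigma_n\to\infty$, the spectral parameter of this Airy model lies deep in its right tail, and WKB-type asymptotics show that this window contributes to $\log|\cD_n|$ a mean of order $\log n$ and a variance of order $\log\sigma_n=o(\log n)$ under the hypothesis $\sigma_n\gg(\log\log n)^2$. Combining the $\log n$ expansions of the two regions then produces the full coefficient $\frac{1}{6}\bigl(\a-(\frac{1}{4}+\frac{3\la^{1/2}}{2(\la^{1/2}+1)^2})\bigr)\log n$ in the theorem: the $\a$ part originates from the sum of $\EE\log\chi^2$ biases across the bulk, while the $\la$-dependent part originates from the Airy edge normalization plus the matching correction at $k^*$.

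I expect the main obstacle to be the matching between the bulk and edge expansions. Both regions contribute at the $\log n$ scale to the mean and both depend on $\sigma_n$ through $\Delta_n$, so one must maintain $o(1)$ control of the partial sums at $k^*$ uniformly in the allowed range of $\sigma_n$ and verify that all $\sigma_n$-dependence beyond the stated $\sigma_n n^{1/3}$ and $\sigma_n^{3/2}$ terms cancels between the two regions. A secondary technical challenge, not present in the Wigner case of \cite{JKOP1}, is propagating the explicit $\la$-dependence through the Laguerre tridiagonal's two interleaved families of chi-square variables, which is what produces the Laguerre-specific constants appearing in every term of the theorem.
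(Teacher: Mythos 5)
Your proposal takes a genuinely different route from the paper, and one step in it is, as stated, wrong. The paper does not split into a bulk region plus an Airy-rescaled edge region. Instead it normalizes the determinant minors by the characteristic roots of the frozen recursion, $E_i=D_i/\prod_{j\le i}|\rho_j^+|$, sets $R_i=1+E_i/E_{i-1}$, and linearizes $R_i\approx L_i$ with $L_i=\xi_i+\omega_i L_{i-1}$. The whole range $3\le i\le n$ is then treated in one framework: a Lyapunov CLT for $\sum L_i=\sum g_{i+1}X_i+\cdots$ where the weights $g_i\approx (1-\omega_i)^{-1}$ encode the propagation of one-step innovations, and sub-gamma concentration plus a Hanson--Wright inequality control all the nonlinear error terms. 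The indices near $n$ are shown to contribute only $O(\sqrt{\log n})$ to both the mean and the variance, so there is no matching problem to solve --- the problem you correctly worry about in your last paragraph simply does not arise in the paper's setup.

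The concrete gap in your sketch is the variance computation. You write that the quadratic variation of the bulk martingale, ``using $\mathrm{Var}(\log\chi^2_\nu)\sim 2/\nu$, sums to $\frac{\a}{3}\log n$.'' That cannot be right as a naive sum: for the Laguerre model the relevant degrees of freedom are $\nu\sim 2i/\a$ (for the sub-diagonal variables), so $\sum_i\mathrm{Var}(\log\chi^2_{2i/\a})\sim\sum_i\a/i\sim\a\log n$, off by a factor of $3$, and the contribution from $\nu\sim 2(m-n+i)/\a$ would additionally introduce a $\la$-dependence that the true variance does not have. The correct $\frac{\a}{3}$ arises only after one accounts for two things you omit: (i) the propagation weights $g_{i+1}^2\sim (1-\omega_i)^{-2}\sim n/(n-i)$ that a one-step innovation picks up when it is transported through the remaining steps of the recursion, and (ii) the edge-induced cutoff at $i\sim n-n^{1/3}\sigma_n$, which turns the logarithmic sum into $\int_{n^{-2/3}\sigma_n}^{1} x^{-1}\,dx=\frac{2}{3}\log n$ rather than $\log n$. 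Combining $\frac{\a}{2}$ per step with $\frac{2}{3}\log n$ is what produces $\frac{\a}{3}\log n$; in the paper this is Lemma~\ref{lem:variance_leadingterm}, $\sum_i g_{i+1}^2\,\EE X_i^2=\frac{\a}{3}\log n(1+o(1))$. Without making these weights explicit, a martingale expansion of $\log|R_k|$ to first order has no mechanism to produce the $1/3$, and the compensator (which accumulates past fluctuations through the Riccati map) is not negligible, so the naive martingale decomposition does not yet close. Similarly, attributing the $\a$-part of the $\log n$ shift to ``$\EE\log\chi^2$ biases'' is not how it arises in the paper: the $\frac{\a}{6}\log n$ piece comes from the second-order Taylor correction $-\frac12\sum R_i^2-\sum B_{3i}$ (Lemma~\ref{lem:B3i_bigstuff}), and the $\la$-dependent piece comes from the deterministic defect $\sum A_{0i}=\sum g_{i+1}(\g_i-\w_i)$ (Lemma~\ref{lem:sumA0i}), neither of which is a chi-square log-bias. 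Finally, the Airy/WKB treatment of the edge window is a substantial black box: it is the route of Lambert--Paquette, which proves convergence of the whole characteristic polynomial and is technically much heavier; the paper deliberately avoids it. If you want to pursue your route, the main things to supply are the explicit propagation weights in the bulk martingale (which recovers exactly the paper's $g_i$), a quantitative tail estimate for the discretized stochastic Airy recursion at spectral parameter $\sigma_n\to\infty$, and a uniform-in-$\sigma_n$ matching argument at $k^*$; at that point the approach would be close to Lambert--Paquette's rather than to this paper's.
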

	
	\begin{theorem}[CLT at the edge]\label{thm:mainresult2}
	In the case where $M_{n,m}$ is from LUE or LOE ($\a=1$ or $\a=2$, respectively), the CLT in Theorem \ref{thm:mainresult} can be extended to hold for any $\sigma_n$ satisfying $-\tau<\sigma_n\ll(\log n)^2$ for some fixed $\tau>0$.
	\end{theorem}
	
	The majority of this paper is devoted to the proof of Theorem \ref{thm:mainresult}, after which the extension to Theorem \ref{thm:mainresult2} is accomplished in Section \ref{sec:extension_to_edge}. Our proof of Theorem \ref{thm:mainresult} is largely inspired by the proof of Theorem 2 in \cite{JKOP1}. As shown by Dumitriu and Edelman \cite{DumitriuEdelman}, the eigenvalue distribution of a G$\b$E matrix is the same as that of a symmetric tridiagonal matrix. The key component of the proof of paper \cite{JKOP1} is an analysis of a recurrence relations on the minors of the tridiagonal matrix. The recurrence relation is nonlinear with random coefficients. Johnstone et al were able to replace the nonlinear recurrence with a linear one with good error control and derived a CLT from the linear recurrence.
	
	
	
	For L$\b$E, the tridiagonal matrix representation is formed as a product of a bi-diagonal matrix and its transpose \cite{DumitriuEdelman}. Similar to the proof of \cite{JKOP1}, we use this representation to arrive at a nonlinear recurrence relation, which we approximate by a linear one. However, unlike in the Gaussian case, our tridiagonal matrix has dependence between adjacent entries and the diagonal entries are not identically distributed. The more intricate structure of the matrix and the additional parameter $\lambda$ make the analysis of the recurrence significantly more technical.  We outline the details of our proof of Theorem \ref{thm:mainresult} in Subsection \ref{subsec:setup} after the set-up.
	
	
	As in \cite{JKOP1}, the extension of Theorem \ref{thm:mainresult} to Theorem \ref{thm:mainresult2} is first done in the case $\b=2$, relying on determinantal structures \cite{GotzeTikhomirov}, then it is obtained for $\b=1$ using the inter-relationship between eigenvalues of unitary and orthogonal ensembles \cite{ForresterRains}. However, there is some subtlety in our case due to the singularity of the Mar{\v{c}}enko--Pastur measure in the case $\lambda=1$.

	
	\subsection{Organization of this paper and remarks on notations}
	The rest of the paper is organized as follows. Section \ref{sec:setup} introduces key quantities, discusses sub-gamma random variables and concentration inequalities associated with them. In Section \ref{sec:A.5}, we provide an asymptotic expression for the log determinant in terms of log of a rescaled determinant and a deterministic shift. In Section \ref{sec:A.4}, we analyze a linear approximation of this log of the rescaled determinant. A CLT for the linear approximation is derived in Section \ref{sec:A.1}. Error incurred from the linear approximation is shown to be negligible in Section \ref{sec:unif}.  Taken together, Sections \ref{sec:setup}-\ref{sec:unif} complete the proof of Theorem \ref{thm:mainresult}.  The extension of Theorem \ref{thm:mainresult} to Theorem \ref{thm:mainresult2} is proved in Section \ref{sec:extension_to_edge}.
	The \hyperref[sec:technical_lemmas]{Appendix} contains proofs of some technical asymptotic estimates. 
	
	\subsection*{Acknowledgement}
	We would like to thank Jinho Baik for his advice and insights throughout our work on this project. The work of the second author was supported in parts by NSF grant DMS-1954790.

	\section{Set-up and preliminary lemmas}\label{sec:setup}
	\subsection{Remarks on notation}
	We use several asymptotic notations throughout this paper and define our conventions here.  Given a sequence $\{a_n\}$ and a positive sequence $\{b_n\}$, we write:
\begin{itemize}
\item $a_n=O(b_n)$ if there exists some constant $C$ such that $|a_n|\leq Cb_n$ for all $n$,
\item $a_n=\Omega(b_n)$ if there exists some constant $C$ such that $|a_n|\geq Cb_n$ for all $n$,
\item $a_n=\Theta(b_n)$ if there exist constants $C_1,C_2$ such that $C_1b_n\leq |a_n|\leq C_2b_n$ for all $n$\\ (or, equivalently, $a_n=O(b_n)$ and $a_n=\Omega(b_n)$),
\item $a_n\ll b_n$ if $\lim_{n\to\infty} a_n/b_n=0$,
\item $a_n\gg b_n$ if $\lim_{n\to\infty} b_n/a_n=0$.
\end{itemize}
	\begin{remark}
		Throughout the paper, we use $C$, $C_1$, $C_2$, or $c$, $c_1$, $c_2$ in order to denote constants that are independent of $N$. Even if the constant is different from one place to another, we may use the same notation $C$, $C_1$, $C_2$, or $c$, $c_1$, $c_2$ as long as it does not depend on $N$ for the convenience of the presentation.
	\end{remark}
	
	\begin{remark}
		Throughout the paper, we omit including $\lfloor \;\rfloor$ and/or $\lceil\; \rceil$ for floor and ceiling functions whenever a quantity that is seemingly not integer-valued is used as  an integer. Instead, we implicitly apply floor function in all such cases. For example, $\sum_{i=n^{1/3}}^{n^{2/3}}$ represents a sum over $i \in \{\lfloor n^{1/3} \rfloor,\lfloor n^{1/3} \rfloor+1, \dots,\lfloor n^{2/3} \rfloor-1, \lfloor n^{2/3} \rfloor \}$. 
	\end{remark}
	
	\begin{remark}
	At various points throughout the paper, we replace $n/m$ with $\la$ without writing the $O(n^{-1})$ term to avoid cumbersome notation. This does not affect the computations as in all cases, the $O(n^{-1})$ term is small and gets absorbed into other error terms in the final approximation.
	\end{remark}

	\subsection{Set-up}\label{subsec:setup}
	As shown in \cite{DumitriuEdelman}, the eigenvalue distribution of a L$\b$E matrix $M_{n,m}$ is the same as that of the $n\times n$ matrix $T_n=BB^T$ where $B$ is a bi-diagonal matrix of dimension $n\times n$.  More specifically, 
	\beq
B=
\begin{bmatrix}
a_1&&&&\\
b_1&a_2&&&\\
&b_2&a_3&&\\
&&\ddots&\ddots&\\
&&&b_{n-1}&a_n
\end{bmatrix}
\quad\text{so}\quad
BB^T=
\begin{bmatrix}
a_1^2&a_1b_1&&&\\
a_1b_1&a_2^2+b_1^2&a_2b_2&&\\
&a_2b_2&a_3^2+b_2^2&&\\
&&&\ddots&a_{n-1}b_{n-1}\\
&&&a_{n-1}b_{n-1}&a_n^2+b_{n-1}^2
\end{bmatrix}
\eeq
where the quantities $\{a_i\},\{b_i\}$ are all independent random variables with distributions satisfying
\beq
a_i^2\sim\frac\a2 \chi^2\left(\frac2\a (m-n+i)\right),\qquad
b_i^2\sim\frac\a2 \chi^2\left(\frac2\a i\right).
\eeq
We observe that, while the entries of $B$ are pairwise independent, $T$ has dependence between adjacent entries.  This is different from what occurs in the tridiagonalization of GOE/GUE matrices and it makes certain aspects of our computations more intricate than what is required in the Gaussian case.
	
	We will find it useful to deal with a centered and rescaled version of the variables $\{a_i\}$ and $\{b_i\}$, so we introduce the notation
	\beq
	d_i=\frac{a_i^2-(m-n+i)}{\sqrt{m-n+i}},\qquad
	c_i=\frac{b_i^2-i}{\sqrt{i}},
	\eeq
	where $\{d_i\}$ and $\{c_i\}$ all have mean 0 and variance $\a$.
	Our goal is to study the quantity
	\beq
	D_n:=\det(T-\gamma m)
	\eeq
	for $\gamma$ as defined in the introduction. Let $D_i$ be the determinant of the upper left $i\times i$ minor of the matrix $T-\gamma m$. Then the determinants satisfy the recursion
	\beq
	D_i=(a_i^2+b_{i-1}^2-\gamma m)D_{i-1}-a_{i-1}^2b_{i-1}^2D_{i-2}
	\eeq
	and, using our centered rescaled variables,
	\beq\label{eq:D_recursion}\begin{split}
D_i=&(d_i\sqrt{m-n+i}+m-n+i+c_{i-1}\sqrt{i-1}+i-1-\gamma m)D_{i-1}\\
&-(d_{i-1}\sqrt{m-n+i-1}+m-n+i-1)(c_{i-1}\sqrt{i-1}+i-1)D_{i-2}.
\end{split}\eeq
We remark that the deterministic analog of this recursion is given by
\beq
D_i'=(m-n+2i-1-\gamma m)D_{i-1}'
-(m-n+i-1)(i-1)D_{i-2}',
\eeq
which has characteristic roots
\beq\label{eq:rho_def}
\rho_i^\pm=-\frac12\left(\gamma m-(m-n+2i-1)
\pm\sqrt{(\gamma m-(m-n+2i-1))^2-4(m-n+i-1)(i-1)}
\right).
\eeq
Observe that the roots $\rho_i^+$ and $\rho_i^-$ are both negative for all $i$.    Their positive versions, $|\rho_i^+|$ and $|\rho_i^-|$, will be important throughout our analysis. To control the growth of $D_i$, we introduce a normalized version of the recursion, following the approach used by Johnstone et al in the Gaussian case \cite{JKOP1}.  In particular, we define
\beq\label{defn:Ei}
E_i:=\frac{D_i}{\prod_{j=1}^i|\rho_j^+|}
\eeq
and obtain the recursion
\beq\label{eq:recursionEi}\begin{split}
E_i=&\frac{d_i\sqrt{m-n+i}+m-n+i+c_{i-1}\sqrt{i-1}+i-1-\gamma m}{|\rho^+_i|} E_{i-1}\\
&-\frac{(d_{i-1}\sqrt{m-n+i-1}+m-n+i-1)(c_{i-1}\sqrt{i-1}+i-1)}{|\rho^+_i| |\rho^+_{i-1}|}E_{i-2}.
\end{split}\eeq
We simplify this expression as
\beq
E_i=\left(\a_i+\b_i+\tau_i+\d_i-\frac{\gamma m}{|\rho_i^+|}\right)E_{i-1}-(\a_{i-1}+\tau_{i-1})(\b_i+\d_i)E_{i-2},
\eeq
where
\beq\label{defn:abtau}
\a_i=\frac{d_i\sqrt{m-n+i}}{|\rho_i^+|},
\qquad\b_i=\frac{c_{i-1}\sqrt{i-1}}{|\rho_i^+|},
\qquad\tau_i=\frac{m-n+i}{|\rho_i^+|},
\qquad\d_i=\frac{i-1}{|\rho_i^+|}.
\eeq
We note that $\tau_i$ and $\d_i$ are deterministic while $\a_i$ and $\b_i$ are centered random variables with variance $\a\tau_{i}/|\rho_i^+|$ and $\a\d_i/|\rho_i^+|$ respectively.

In the subsequent sections of this paper, we obtain a CLT for $E_n$ and deduces a CLT for our original determinant. Our general approach, modeled after the methods in \cite{JKOP1}, is to approximate the recursion for $E_i$ by a linear recursion. The authors \cite{JKOP1} observe that, in their setting, the ratio $E_i/E_{i-1}$ is close to $-1$ for all $i$ when $n$ is large. This observation holds in our setting as well (we note that $E_i$ is non-zero since it is the rescaled characteristic polynomial of a minor of $M_{n,m}/m$, evaluated at a point that is outside of the spectrum).  Therefore, we define the quantity
\beq\label{defn:Ri}
R_i:=1+\frac{E_i}{E_{i-1}},
\eeq
and show is close to zero.  Dividing the recursion \eqref{eq:recursionEi} by $E_{i-1}$ and rearranging terms, we obtain
\beq
R_i=\left(\a_i+\b_i+\tau_i+\d_i+1-\tfrac{\gamma m}{|\rho_i^+|}\right)+(\a_{i-1}+\tau_{i-1})(\b_i+\d_i)\frac{1}{1-R_{i-1}}.
\eeq
To obtain our linear approximation of the recursion, we make the following observations:
\begin{itemize}
\item $\frac{1}{1-R_{i-1}}=1+\frac{R_{i-1}}{1-R_{i-1}}=1+R_{i-1}+\frac{R_{i-1}^2}{1-R_{i-1}}$,
\item For any $i$, we have $\a_i,\b_i,R_i\to0$ as $m,n\to\infty$.  This is easy to see for $\a_i,\b_i$ and not immediately obvious for $R_i$, but we prove it later in the paper.
\end{itemize}
Using these observations, we rewrite the recursion for $R_i$ as 
\beq\label{eqn:Rirecursion}
R_i=\xi_i+\omega_iR_{i-1}+\e_i,
\eeq
where
\begin{align}
\xi_i&=\a_i+\b_i(1+\tau_{i-1})+\a_{i-1}\d_i, \label{defn:xii}\\
\omega_i&=\tau_{i-1}\d_i,\\
\e_i&=-(\g_i-\w_i) +\a_{i-1}\beta_i  +(\a_{i-1}\b_i+\a_{i-1}\d_i+\tau_{i-1}\b_i)\tfrac{R_{i-1}}{1-R_{i-1}}
+\tau_{i-1}\d_i\tfrac{R_{i-1}^2}{1-R_{i-1}}.\label{defn:wi_ei}
\end{align}
and $\g_i = \frac{|\rho_i^-|}{|\rho_i^+|}$ for $3 \leq i \leq n$.

We note that $\{\xi_i\}$ are mean-zero random variables while $\{\omega_i\}$ are deterministic and we will prove that $\{\e_i\}$ are small.  Thus, we can define a recursion on a new sequence of variables $L_i$, which we will show are a good approximation of $R_i$.  We define $L_i$ to satisfy
\beq
L_i:=\xi_i+\omega_i L_{i-1} \text{for}i\geq4,\qquad L_3:=\xi_3.
\eeq
From this recursive definition,
\beq
L_j=\sum_{i=3}^{j-1}\xi_i\omega_{i+1}\omega_{i+2}\cdots\omega_j + \xi_j, \quad  \text{for } j\geq4.
\eeq
It is important (in showing CLT) to express $L_j$ as a sum of independent random variables, yet we have dependence between consecutive terms in the sequence $\{\xi_i\}$. To address this issue, we expand $\xi_i$ using \eqref{defn:xii} to have
	\begin{equation}\label{eqn:Lj_full}
		\begin{split}
			L_j 
			&=  \sum_{i=3}^{j-1} \omega_{i+1}\dots \omega_j X_i + X_j + \alpha_j -\omega_3 \dots \omega_j \alpha_2,
		\end{split}
	\end{equation}
	where 
	\begin{equation}\label{defn:Xi}
		X_i = (1 + \tau_{i-1})(\delta_i \alpha_{i-1} + \beta_i), \quad 3 \leq i \leq n.
	\end{equation}
Note that, unlike $\xi_i$, the variables $X_i$ are pairwise independent.  In later calculations, it is more convenient to work with $Y_i$ rather than with $L_i$, where $Y_i$ is given by
\beq\label{defn:Yi}
Y_i = \sum_{j = 3}^{i-1} \w_{j+1}\dots\w_i X_j+X_i, \quad 3 \leq i \leq n.
\eeq

With this set-up, our proof of Theorem \ref{thm:mainresult} consists of the following key steps:
\begin{enumerate}
	\item First, we write the log determinant of $T_n -\gamma m$ in terms of log of the rescaled quantity $|E_n|$, asymptotically as $n$ goes to infinity. 
	\item We then show that in the regime $(\log\log n)^2\ll\sigma_n \ll (\log n)^2$, with probability $1-O(n^{-1})$, both $\max_i|L_i|$ and $\max_i|R_i|$ are $o(n^{-1/3})$. Thus Taylor's approximation for logarithm is applied to obtain 
	\[\log|E_n| = \sum_{i=3}^n\log|1-R_i|+\log|E_2| =\sum_{i=3}^n(-R_i-R_i^2/2)+ o(1),\]
	with probability $1-O(n^{-1})$.
	\item With probability $1-O(n^{-1})$, we have $\sum_{i=3}^n(-R_i-R_i^2/2)$ is $-\sum_{i=3}^n L_i$ plus a deterministic shift, up to an error of order $\sqrt{\log n}$.
	\item Lastly, we show $-\sum_{i=3}^n L_i$ has variance of exact order $\log n$, and satisfies Lyapunov's CLT. 
\end{enumerate}
While this general outline has close resemblance to that of the Gaussian case \cite{JKOP1}, each step involves more technical treatment due to the complicated structure of the recurrence relations. Before proceeding with these steps, we examine properties of the quantities introduced in this section.

\subsection{Properties of sub-gamma random variables}
It is central in our analysis that error due to linear approximation and similar reductions are negligible. In most instances, these error terms appear as sum of independent random variables that behave similarly to sub-gaussian random variables, known as \textit{sub-gamma} families.
\begin{definition}\label{def:SG}
	For $v,u>0$, a real-valued centered random variable $X$ is said to belong to a sub-gamma family $\SG(v,u)$ if for all $t \in \bR$ such that $|t|< \frac{1}{u}$, 
	\beq 
	\EE e^{tX} \leq \exp\left(\frac{t^2 v}{2 (1 - tu)} \right). 
	\eeq
\end{definition}
The following properties of sub-gamma random variables are useful for our analysis.
\begin{itemize}
	\item If $X \sim \chi^2(d) - d$, then $X \in \SG(2d,2)$
	\item Given a real number $c$ and $X \in \SG(v_X,u_X)$, $cX \in \SG(c^2v_X,|c|u_X)$
	\item If $X \in \SG(v_X,u_X)$ and $Y \in \SG(v_Y, u_Y)$ are independent, then $X+Y \in \SG(v_X+v_Y, u_X \vee u_Y)$
\end{itemize}
We verify that for $i=3,\dots,n$, the random variables $\a_i$ and $\b_i$ as defined in \eqref{defn:abtau}, and their linear combination $X_i$ belong to sub-gamma families. 
\begin{lemma}\label{lem:SG}
	For $i = 3,\dots, n$,
	\beqq
	\a_i\in \SG\left(\frac{\a \tau_i}{|\rho_i^+|},\frac{\a}{|\rho_i^+|}\right), \qquad
	\b_i\in\SG\left(\frac{\a\delta_i}{|\rho_i^+|},\frac{\a}{|\rho_i^+|}\right), \qquad
	X_i 
	\in \SG(v_i,u_i),
	\eeqq
	where
	\beq 
	v_i = \frac{\a \delta_i}{|\rho_i^+|} (\w_i+1)(1+\tau_{i-1})^2, \quad u_i = \frac{\a(1+\tau_{i-1})}{|\rho_i^+|}.
	\eeq
\end{lemma}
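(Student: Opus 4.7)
The plan is to apply the three bullet-point properties of sub-gamma families listed right before the lemma in a direct, mechanical fashion, after first rewriting $\alpha_i$ and $\beta_i$ as constants times centered chi-squared variables. The main work is bookkeeping: translating the chi-squared sub-gamma parameters through two scalings (by the normalizing factor $|\rho_i^+|^{-1}$, and then by the coefficients $\delta_i$, $1+\tau_{i-1}$) and then through one independent sum.

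First, for $\alpha_i$: by definition $\alpha_i = (a_i^2-(m-n+i))/|\rho_i^+|$ and $a_i^2 = (\alpha/2)\chi^2(d)$ with $d = (2/\alpha)(m-n+i)$. Since $\chi^2(d)-d\in\SG(2d,2)$, the scaling property (applied with constant $\alpha/2$) gives
\beq
a_i^2-(m-n+i)\in\SG\!\bigl(\alpha(m-n+i),\,\alpha\bigr),
\eeq
and a second scaling by $1/|\rho_i^+|$ together with the identity $\tau_i = (m-n+i)/|\rho_i^+|$ yields exactly the claimed parameters for $\alpha_i$. The argument for $\beta_i$ is identical, replacing $m-n+i$ with $i-1$ and $\tau_i$ with $\delta_i$.

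For $X_i$, I would observe first that $\alpha_{i-1}$ depends only on $a_{i-1}$ while $\beta_i$ depends only on $b_{i-1}$, hence they are independent. Then I would apply the scaling property to $\delta_i\alpha_{i-1}$ to obtain
\beq
\delta_i\alpha_{i-1}\in\SG\!\left(\frac{\alpha\,\delta_i^2\,\tau_{i-1}}{|\rho_{i-1}^+|},\,\frac{\alpha\,\delta_i}{|\rho_{i-1}^+|}\right),
\eeq
use the sum property with the already-derived $\beta_i\in\SG(\alpha\delta_i/|\rho_i^+|,\alpha/|\rho_i^+|)$, and finally apply the scaling property once more with the deterministic factor $1+\tau_{i-1}$ to recover a sub-gamma bound for $X_i$.

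The main obstacle is showing that the variance and scale parameters obtained by this direct summation actually fit inside the compact form stated in the lemma. Using $\omega_i = \delta_i\tau_{i-1}$, the variance parameter coming out of the sum is $(1+\tau_{i-1})^2\alpha\delta_i\bigl(\omega_i/|\rho_{i-1}^+| + 1/|\rho_i^+|\bigr)$, and matching this to $v_i = \alpha\delta_i(\omega_i+1)(1+\tau_{i-1})^2/|\rho_i^+|$ requires the monotonicity estimate $|\rho_i^+|\le|\rho_{i-1}^+|$; matching the scale parameter $u_i$ similarly requires $(i-1)\le|\rho_{i-1}^+|$ so that $\delta_i/|\rho_{i-1}^+|\le 1/|\rho_i^+|$ and the $\max$ in the sum rule collapses to $\alpha/|\rho_i^+|$. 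Both inequalities follow from the explicit formula \eqref{eq:rho_def} for $\rho_i^\pm$ in the parameter range of interest (for instance, the smaller root gives $|\rho_i^+|^2\ge |\rho_i^+||\rho_i^-| = (m-n+i-1)(i-1)$, which immediately yields $|\rho_{i-1}^+|\ge i-1$; monotonicity of $|\rho_i^+|$ follows from differentiating the formula in the continuous variable $i$). These are elementary but must be checked; once in hand, the inflation of $v$ and the collapse of the $\max$ are immediate and the lemma follows.
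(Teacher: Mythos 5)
Your proposal is correct and is the natural mechanical verification the paper leaves implicit (the paper states this lemma without a written proof, only the remark that "smaller value can be obtained" for $v_i$). The three bullet properties plus one additional monotonicity fact — that enlarging either parameter enlarges the sub-gamma family — give exactly what you describe: rewrite $\alpha_i$ and $\beta_i$ as $|\rho_i^+|^{-1}$ times the centered scaled chi-squared variables $a_i^2-(m-n+i)$ and $b_{i-1}^2-(i-1)$, track the parameters through two scalings, and then combine via the independent-sum rule. Two small remarks on your write-up. First, the inequality $|\rho_{i-1}^+|\geq i-1$ is cleanest obtained from your product-of-roots observation applied at index $i$, i.e.\ $|\rho_i^+|\geq\sqrt{|\rho_i^+||\rho_i^-|}=\sqrt{(m-n+i-1)(i-1)}\geq i-1$, and then $|\rho_{i-1}^+|\geq|\rho_i^+|$ by monotonicity; your phrasing jumbles the indices slightly. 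Second, the monotonicity of $|\rho_i^+|$ in $i$ is stated explicitly in the paper immediately before Lemma 2.3, so you need not appeal to differentiating the closed form — you can simply cite that observation. With those two small clean-ups, your proof is complete and matches what the authors intend.
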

In the subsequent sections, both characterizations of sub-gamma random variables in terms of tail probabilities, and in terms of $p$-norms for $p\geq 1$ are used. In particular, we regularly apply the following result.
\begin{lemma}\label{lem:SG_boucheron}(see Theorem 2.3 of \cite{boucheron2013concentration})
		
If $X$ belongs to $\SG(v,u)$, then for every $t>0$,
\beq
\PP(|X| > \sqrt{2vt}+ut)\leq 2e^{-t}. 
\eeq
In addition, for every integer $p\geq 2$,
\beq 
\|X\|_{p}^{p} =\EE [X^{p}] \leq (p/2)! (8v)^{p/2} + p! (4u)^{p}. 
\eeq
\end{lemma}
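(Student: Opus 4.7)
The lemma is a standard pair of bounds on sub-gamma random variables, so the plan is to derive each half by the classical Cramér--Chernoff route used throughout Chapter 2 of Boucheron--Lugosi--Massart.

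For the tail bound, I would apply Markov's inequality to $e^{\lambda X}$ together with the defining MGF estimate: for every $\lambda \in (0, 1/u)$ and $x > 0$,
\[
\PP(X > x) \leq \exp\!\left(-\lambda x + \frac{\lambda^2 v}{2(1 - \lambda u)}\right).
\]
The right-hand side must be optimized in $\lambda$, which amounts to inverting the Legendre--Fenchel transform of $\psi(\lambda) := \lambda^2 v / (2(1 - \lambda u))$. The convenient reformulation is the reverse: for each fixed $t > 0$, set $x = \sqrt{2vt} + ut$ and verify that an explicit choice of $\lambda \in (0, 1/u)$ (depending on $t$, $v$, $u$) makes the exponent equal to $-t$. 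This gives $\PP(X > \sqrt{2vt} + ut) \le e^{-t}$. Applying the same argument to $-X$, which is centered and inherits the same sub-gamma MGF bound by hypothesis, and then taking a union bound produces the factor of two in front of $e^{-t}$.

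For the moment bound, I would begin from the layer-cake representation
\[
\EE|X|^p = \int_0^\infty p\, x^{p-1}\, \PP(|X| > x)\, dx
\]
and change variables $x = \sqrt{2vt} + ut$. This substitution is natural because $p\, x^{p-1}\, dx = d(x^p)$, so after inserting the tail bound from the first step and integrating by parts (the boundary terms vanish), the integral collapses to
\[
\EE|X|^p \leq 2 \int_0^\infty \bigl(\sqrt{2vt} + ut\bigr)^p e^{-t}\, dt.
\]
Splitting with $(a + b)^p \leq 2^{p-1}(a^p + b^p)$ reduces this to two elementary integrals against $e^{-t}$, evaluated via $\int_0^\infty t^{p/2} e^{-t}\, dt = \Gamma(p/2 + 1)$ and $\int_0^\infty t^p e^{-t}\, dt = p!$, which yields the stated bound with factors $(p/2)!\,(8v)^{p/2}$ and $p!\,(4u)^p$ (in fact with a slightly better constant than $(4u)^p$, which is absorbed into the stated estimate).

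The main (mild) obstacle is the Cramér-transform inversion in the first step: one has to check that the explicit $\lambda(t)$ remains in $(0, 1/u)$ and produces the clean form $\sqrt{2vt} + ut$ rather than the messier radical expression $(v/u^2)\bigl(1 + ux/v - \sqrt{1 + 2ux/v}\bigr)$ that comes out of direct optimization. Once the deviation inequality is in place, the moment estimate is essentially routine. Since the statement is standard and is invoked only as a black box in later sections of the paper, we would simply cite Boucheron--Lugosi--Massart for the full argument rather than reproduce the calculation in detail.
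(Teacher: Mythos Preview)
Your proposal is correct and, in the end, matches exactly what the paper does: the paper gives no proof of this lemma at all, simply citing Theorem~2.3 of Boucheron--Lugosi--Massart, which is precisely what you conclude you would do. Your sketch of the Cram\'er--Chernoff argument and the layer-cake moment computation is accurate (and indeed yields the sharper $(2u)^p$ that you note is absorbed into the stated $(4u)^p$), but none of it appears in the paper itself.
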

\subsection{Preliminary lemmas concerning the values of \texorpdfstring{$\rho_i^+$}{rho+}, \texorpdfstring{$\rho_i^-$}{rho-}, and \texorpdfstring{$\w_i$}{wi}}
We begin by observing that  $|\rho_i^+|$ is a decreasing function of $i$ and $|\rho_i^-|$ is an increasing function of $i$.  Other key properties are captured in the following lemma.

\begin{lemma}\label{lem:rho_properties} The quantities $|\rho_i^+|$ and $|\rho_i^-|$ satisfy the following asymptotic bounds, uniformly in $i$:
\begin{enumerate}[(i)]
\item\label{lem:rho_properties_rho+bound} $|\rho_i^+|=\Theta(n)$,
\item\label{lem:rho_properties_rhodiffbound} $|\rho_i^+|-|\rho_i^-|=\Omega(n^{2/3}\sigma_n^{1/2})$,
\item\label{lem:rho_properties_anotherdiffbound} 
$|\rho_i^-|-|\rho_{i-1}^-|=O(n^{1/3}\sigma_n^{-1/2})$\quad and \quad $|\rho_{i-1}^+|-|\rho_{i}^+|=O(n^{1/3}\sigma_n^{-1/2})$,
\item\label{lem:rho_properties_rhoratiocompare} $\frac{|\rho_{i}^-|}{|\rho_{i}^+|}-\frac{|\rho_{i-1}^-|}{|\rho_{i-1}^+|}=O(n^{-2/3}\sigma_n^{-1/2})$.
\end{enumerate}
\end{lemma}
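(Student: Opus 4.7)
The plan is to work throughout with the discriminant
\[
f(i) := (\gamma m - (m-n+2i-1))^2 - 4(m-n+i-1)(i-1) = (|\rho_i^+| - |\rho_i^-|)^2,
\]
and begin with the crucial algebraic observation that $f$ is \emph{linear} in $i$: expanding, the $i^2$ terms cancel and one obtains $f'(i) = -4(\gamma m - 1) = -\Theta(n)$. In particular $f(i-1) - f(i) = 4(\gamma m - 1) = \Theta(n)$ uniformly in $i$, and $f$ is strictly decreasing, so $f(i) \geq f(n)$. All four parts then flow from this observation together with the explicit formulas $|\rho_i^\pm| = \tfrac{1}{2}\bigl(\gamma m - (m-n+2i-1) \pm \sqrt{f(i)}\bigr)$.

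For (i), since $|\rho_i^+|$ is decreasing in $i$, it suffices to bound $|\rho_n^+|$ from below and $|\rho_3^+|$ from above. Substituting $m = n/\lambda + O(1)$ and $\gamma = (1+\sqrt{\lambda})^2 + \sigma_n n^{-2/3}$ gives $\gamma m - (m+n-1) = 2\sqrt{mn} + \lambda^{-1}\sigma_n n^{1/3} + O(1) = \Theta(n)$, so $|\rho_n^+| = \Omega(n)$; a similar substitution at $i=3$ yields $|\rho_3^+| = O(n)$. For (ii), $|\rho_i^+| - |\rho_i^-| = \sqrt{f(i)} \geq \sqrt{f(n)}$, and the same expansions give
\[
f(n) = (2\sqrt{mn} + \lambda^{-1}\sigma_n n^{1/3} + 1)^2 - 4(m-1)(n-1) = 4\lambda^{-3/2}\sigma_n n^{4/3} + O(n),
\]
so $\sqrt{f(n)} = \Omega(n^{2/3}\sigma_n^{1/2})$.

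For (iii), differencing the explicit formula for $|\rho_i^-|$ and then rationalizing gives
\[
|\rho_i^-| - |\rho_{i-1}^-| = -1 + \tfrac{1}{2}\bigl(\sqrt{f(i-1)} - \sqrt{f(i)}\bigr) = -1 + \frac{2(\gamma m - 1)}{\sqrt{f(i-1)} + \sqrt{f(i)}}.
\]
By the linearity of $f$ together with part (ii), the second term is $\Theta(n)/\Omega(n^{2/3}\sigma_n^{1/2}) = O(n^{1/3}\sigma_n^{-1/2})$, and the constant $-1$ is absorbed since $n^{1/3}\sigma_n^{-1/2} \to \infty$ under the assumption $\sigma_n \ll (\log n)^2$. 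The bound on $|\rho_{i-1}^+| - |\rho_i^+|$ is identical in structure, with the leading constant being $+1$ instead of $-1$.

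Finally for (iv), I would use the decomposition
\[
\frac{|\rho_i^-|}{|\rho_i^+|} - \frac{|\rho_{i-1}^-|}{|\rho_{i-1}^+|} = \frac{|\rho_i^-| - |\rho_{i-1}^-|}{|\rho_i^+|} + \frac{|\rho_{i-1}^-|}{|\rho_{i-1}^+|} \cdot \frac{|\rho_{i-1}^+| - |\rho_i^+|}{|\rho_i^+|},
\]
and bound both numerators by $O(n^{1/3}\sigma_n^{-1/2})$ via (iii), the denominator $|\rho_i^+|$ by $\Omega(n)$ via (i), and observe that $|\rho_{i-1}^-|/|\rho_{i-1}^+| \leq 1$, yielding $O(n^{-2/3}\sigma_n^{-1/2})$. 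The only genuinely subtle step in the whole argument is the initial linearity observation; once that is noticed, everything reduces to careful tracking of leading-order asymptotics and a single rationalization.
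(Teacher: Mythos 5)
Your proof is correct and follows essentially the same route as the paper: parts (i)--(ii) are identical (monotonicity plus endpoint evaluation), and parts (iii)--(iv) both rest on observing that the discriminant is linear in $i$, giving $f(i-1)-f(i)=4(\gamma m-1)=O(n)$, which after rationalization and the bounds from (i)--(ii) yields the claims. The only cosmetic difference is in (iii), where the paper bounds the sum of the two non-negative differences at once via the telescoping identity $(|\rho_{i-1}^+|-|\rho_{i-1}^-|)-(|\rho_i^+|-|\rho_i^-|)=\sqrt{f(i-1)}-\sqrt{f(i)}$, whereas you difference the explicit formulas for $\rho_i^-$ and $\rho_i^+$ separately and absorb the $\mp 1$; both variants use the same rationalization and deliver the same estimate.
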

\begin{proof}
To show \eqref{lem:rho_properties_rho+bound}, for the lower bound, we have
\beq\begin{split}
|\rho_i^+|\geq|\rho_n^+|
&>\tfrac12\left(\gamma m-(m+n-1)\right)
=\tfrac12\left(2\sqrt{mn}+\lambda^{-1}\sigma_n n^{1/3}+1\right)=\Omega(n).
\end{split}\eeq
For the upper bound, we have 
\beq\begin{split}
|\rho_i^+|\leq|\rho_1^+|
&=\gamma m-(m+n-1)
=2\sqrt{mn}+2n+\la^{-1}\sigma_n n^{1/3}-1=O(n).
\end{split}\eeq
For \eqref{lem:rho_properties_rhodiffbound}, we have
\beq\begin{split}
|\rho_i^+|-|\rho_i^-|>|\rho_n^+|-|\rho_n^-|
&=\sqrt{2\lambda^{-3/2}\sigma_n n^{4/3}+O(n)}
=\Omega(n^{2/3}\sigma_n^{1/2}).
\end{split}\eeq
For \eqref{lem:rho_properties_anotherdiffbound}, it suffices to show that $|\rho_i^-|-|\rho_{i-1}^-|+|\rho_{i-1}^+|-|\rho_i^+|=O(n^{1/3}\sigma^{-1/2})$. This quantity can be rewritten as $\left(|\rho_{i-1}^+|-|\rho_{i-1}^-|\right)-\left(|\rho_i^+|-|\rho_i^-|\right)$, which is the difference of two square root expressions.  Thus, 
\beq\begin{split}
\left(|\rho_{i-1}^+|-|\rho_{i-1}^-|\right)-\left(|\rho_i^+|-|\rho_i^-|\right)
&=\frac{\left(|\rho_{i-1}^+|-|\rho_{i-1}^-|\right)^2-\left(|\rho_i^+|-|\rho_i^-|\right)^2}{|\rho_{i-1}^+|-|\rho_{i-1}^-|+|\rho_i^+|-|\rho_i^-|}\\
&=O\left(\frac{\left(|\rho_{i-1}^+|-|\rho_{i-1}^-|\right)^2-\left(|\rho_i^+|-|\rho_i^-|\right)^2}{n^{2/3}\sigma_n^{1/2}}\right).
\end{split}\eeq
Since the numerator inside the big-O term simplifies to $4\gamma m-4=O(n)$,
part \eqref{lem:rho_properties_anotherdiffbound} of the lemma follows. Lastly, since
\beq\begin{split}
\frac{|\rho_{i}^-|}{|\rho_{i}^+|}-\frac{|\rho_{i-1}^-|}{|\rho_{i-1}^+|}
&=\frac{1}{|\rho_i^+|}(|\rho_i^-|-|\rho_{i-1}^-|)
+\frac{|\rho_{i-1}^-|}{|\rho_i^+| |\rho_{i-1}^+|}(|\rho_{i-1}^+|-|\rho_i^+|)\\
&<\frac{|\rho_i^-|-|\rho_{i-1}^-|+|\rho_{i-1}^+|-|\rho_i^+|}{|\rho_i^+|},
\end{split}\eeq
applying parts \eqref{lem:rho_properties_rho+bound} and \eqref{lem:rho_properties_anotherdiffbound} of the lemma to this inequality, we obtain \eqref{lem:rho_properties_rhoratiocompare}.
\end{proof}

Since $\w_i = |\rho_i^-|/|\rho_{i-1}^+|$ for $i=3, \dots, n$, we know $\w_i$ takes values in $(0,1)$ and is increasing in $i$. Furthermore, the $i$-dependent asymptotic descriptions of $\w_i$ as $n \to \infty$ can also be obtained from the equation, as in the following lemma. 

\begin{lemma}\label{lem:wi}
	For $i\leq n$ satisfying $i\to\infty$ as $n \to \infty$, the value of $\omega_i$ satisfies the following asymptotic expressions. 
	\begin{enumerate}[(i)]
		\item\label{lem:wi(1)} If $n-i \ll n^{1/3} \sigma_n$, $\omega_{i} = 1 - 2\la^{-1/4} n^{-1/3} \sigma_n^{1/2} \left(1+O(n^{-1/3}\sigma_n^{1/2})\right)$.
		\item\label{lem:wi(2)} If $n-i =\Theta( n^{1/3} \sigma_n)$, $\omega_{i} = 1 - 2\left(\la^{-1/2} + \left(\la^{1/2} +1\right)^2\cdot \frac{n-i}{n^{1/3} \sigma_n}\right)^{1/2} n^{-1/3} \sigma_n^{1/2} \left(1+O(n^{-1/3}\sigma_n^{1/2})\right)$.
		\item\label{lem:wi(3)} If $n^{1/3}\sigma_n \ll n-i \ll n$, $\omega_{i} = 1 -2(1+\la^{1/2})\left(\frac{n-i}{n}\right)^{1/2} \left(1+O((\frac{n-i}{n})^{1/2})\right)$.
		\item\label{lem:wi(4)} If $n-i =\Theta( n)$, $\omega_{i}= \frac{\la^{-1/2}+\frac{n-i}{n} - (\la^{-1/2}+1)\left(\frac{n-i}{n}\right)^{1/2}}{\la^{-1/2}+\frac{n-i}{n} + (\la^{-1/2}+1)\left(\frac{n-i}{n}\right)^{1/2}}(1+o(1))$.
	\end{enumerate}
\end{lemma}
\begin{proof}
We begin by observing that
\beq
\frac{|\rho_{i-1}^-|}{|\rho_{i-1}^+|}<\omega_i<\frac{|\rho_{i}^-|}{|\rho_{i}^+|}
=\frac{m_i^-}{m_i^+},
\eeq
where
\beq\label{defn:mi_pm}
m_i^\pm
:=1\pm\sqrt{1-\frac{4(i-1)(m-n+i-1)}{(\gamma m-(m-n+2i-1))^2}}.
\eeq
By Lemma \ref{lem:rho_properties}, we obtain
\beq\label{eqn:wi-mi_unif}
\omega_i=\frac{m_i^-}{m_i^+}+O(n^{-2/3}\sigma_n^{-1/2}),
\eeq
and it suffices for the proof the lemma to consider $\frac{m_i^-}{m_i^+}$ in place of $\omega_i$. Setting $x=n-i+1$, we get
\beq\label{eqn:mi_pm_full}
\begin{split}
m_i^\pm
&=1\pm\sqrt{1-\frac{4(n-x)(m-x)}{(2\sqrt{mn}+mn^{-2/3}\sigma_n+2x-1)^2}}\\
&=1\pm\sqrt{\frac{4\la^{-3/2}\sigma_n n^{4/3}+4(1+\la^{-1/2})^2nx-4x^2+k_{n,x}}{4\la^{-1}n^2+4\la^{-3/2}\sigma_n n^{4/3}+8\la^{-1/2}nx+k_{n,x}}},
\end{split}\eeq
where $k_{n,x}=-4\la^{-1/2}n+(\la^{-1}\sigma_n n^{1/3}+2x-1)^2$.
We use the notation $m_i^\pm=1\pm f_n(x)$ where $f_n(x)$ is the square root term and observe that, when $x\ll n$, $f_n(x)=o(1)$.  In this case, $m_i^-/m_i^+=1-2f_n(x)+O(f_n(x)^2)$. Evaluating the leading order term of $f_n(x)$ gives us (i)-(iii) of the lemma.  To obtain (iv), we evaluate the expression $|\rho_i^-|/|\rho_i^+|$ directly, suppressing all $o(1)$ terms.
\end{proof}
\begin{corollary}\label{cor:wi}
There exist constants $0<C_1<C_2$ such that, for sufficiently large $n$ and uniformly in $i$, we have
\begin{enumerate}[(i)]
\item\label{cor:wi(1)} for $i\leq n-n^{1/3}\sigma_n$,\quad $C_1\left(\frac{n-i}{n}\right)^{1/2}<1-\omega_i<C_2\left(\frac{n-i}{n}\right)^{1/2}$,
\item\label{cor:wi(2)} for $i\geq n-n^{1/3}\sigma_n$,\quad 
$C_1 n^{-1/3}\sigma_n^{1/2} <1-\omega_i<C_2 n^{-1/3}\sigma_n^{1/2}$.
\end{enumerate}
\end{corollary}
Since $\g_i = \frac{|\rho_i^-|}{|\rho_i^+|}$, Lemma \ref{lem:rho_properties} and \eqref{eqn:wi-mi_unif} implies that
\begin{equation}\label{eqn:wi-gi_unif}
\g_i-\w_i = O(n^{-\frac23}\sigma_n^{-\frac12}) \quad  \text{uniformly in } i.
\end{equation}
In some instances, this uniform bound is not sufficient and an upper bound that depends on $i$ as in the following lemma is required (e.g. see Lemma \ref{lem:sumA0i}).
\begin{lemma}\label{lem:gi-wi}
	There exists constant $C>0$ such that for sufficiently large $n$,
	\beqq
	\g_i-\w_i<\frac{C}{n(1-\w_i)}, \quad  \text{for every } 3\leq i \leq n.
	\eeqq
\end{lemma}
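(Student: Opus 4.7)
The plan is to expand $\g_i - \w_i$ in terms of the spectral gap $B_i := |\rho_i^+| - |\rho_i^-|$ and show that it decays precisely like $1/(n(1-\w_i))$. The key identity is that by Vieta's formulas applied to the characteristic polynomial defining $\rho_i^\pm$, one has $|\rho_i^+|\,|\rho_i^-| = (m-n+i-1)(i-1)$, from which $\w_i = \tau_{i-1}\d_i = |\rho_i^-|/|\rho_{i-1}^+|$ follows directly. Using this together with $|\rho_i^-| \leq |\rho_i^+|$, I would write
\beqq
\g_i - \w_i = \frac{|\rho_i^-|}{|\rho_i^+|} - \frac{|\rho_i^-|}{|\rho_{i-1}^+|} = \frac{|\rho_i^-|\bigl(|\rho_{i-1}^+|-|\rho_i^+|\bigr)}{|\rho_i^+|\,|\rho_{i-1}^+|} \leq \frac{|\rho_{i-1}^+|-|\rho_i^+|}{|\rho_{i-1}^+|}.
\eeqq
Since $|\rho_{i-1}^+| = \Theta(n)$ by Lemma \ref{lem:rho_properties}(i), the task reduces to showing $|\rho_{i-1}^+|-|\rho_i^+| = O\bigl(1/(1-\w_i)\bigr)$.

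Next, setting $A_i := \gamma m - (m-n+2i-1)$ so that $|\rho_i^+| = \tfrac12(A_i + B_i)$, a short computation gives $A_{i-1}-A_i = 2$ and, via the telescoping identity $(m-n+i-1)(i-1) - (m-n+i-2)(i-2) = m-n+2i-3$, the equation $B_{i-1}^2 - B_i^2 = 4(\gamma m - 1)$. Rationalizing then yields
\beqq
|\rho_{i-1}^+| - |\rho_i^+| = 1 + \frac{2(\gamma m - 1)}{B_{i-1}+B_i} \leq 1 + \frac{\gamma m - 1}{B_i},
\eeqq
where I used that $B_i^2$ is decreasing in $i$ (so $B_{i-1}+B_i \geq 2B_i$). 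Combined with $\gamma m = \Theta(n)$ and $B_i = O(n)$, this gives $|\rho_{i-1}^+| - |\rho_i^+| = O(n/B_i)$.

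It remains to identify $B_i$ with a quantity comparable to $n(1-\w_i)$. By definition $B_i = |\rho_i^+|(1 - \g_i)$, so with $|\rho_i^+| = \Theta(n)$ I need only $1 - \g_i \geq c(1-\w_i)$. For this I would combine the uniform estimate $\g_i - \w_i = O(n^{-2/3}\sigma_n^{-1/2})$ from Lemma \ref{lem:rho_properties}(iv) with the lower bound $1 - \w_i \geq c\,n^{-1/3}\sigma_n^{1/2}$ from Corollary \ref{cor:wi}, which together give $\g_i - \w_i = o(1-\w_i)$ and hence $1 - \g_i \geq \tfrac12(1-\w_i)$ for $n$ large. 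Therefore $B_i \geq c\,n(1-\w_i)$, and combining with the bound $\g_i - \w_i \leq (|\rho_{i-1}^+| - |\rho_i^+|)/|\rho_{i-1}^+|$ from Step 1 together with $|\rho_{i-1}^+| = \Theta(n)$ yields $\g_i - \w_i = O(1/B_i) = O\bigl(1/(n(1-\w_i))\bigr)$.

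The main subtlety is that the uniform bound $\g_i - \w_i = O(n^{-2/3}\sigma_n^{-1/2})$ from Lemma \ref{lem:rho_properties}(iv) is too weak to recover the $i$-dependent estimate directly, since $1/(n(1-\w_i))$ can be substantially larger than $n^{-2/3}\sigma_n^{-1/2}$ when $i$ is far from $n$. The refinement comes from extracting the factor $1/B_i$ explicitly in the second step and only invoking the uniform estimate at the very end to pass from $1 - \g_i$ back to $1 - \w_i$; naively substituting the worst-case lower bound $B_i = \Omega(n^{2/3}\sigma_n^{1/2})$ from Lemma \ref{lem:rho_properties}(ii) would recover only the uniform bound rather than the pointwise statement of the lemma.
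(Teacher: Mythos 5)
Your proof is correct and follows essentially the same route as the paper's. The paper works with $U_i = B_i^2$ and $V_i$ and the normalized root $m_i^+$ rather than $B_i$ directly, but the three steps are identical: write $\g_i-\w_i = \frac{\w_i}{|\rho_i^+|}\left(|\rho_{i-1}^+|-|\rho_i^+|\right)$, rationalize the square-root difference using $B_{i-1}^2-B_i^2 = 4(\g m-1)$ to get $|\rho_{i-1}^+|-|\rho_i^+| = O\left(n/B_i\right)$, and then bootstrap from $B_i = |\rho_i^+|(1-\g_i)$ back to $1-\w_i$ using the uniform estimate $\g_i-\w_i = O(n^{-2/3}\sigma_n^{-1/2})$ together with Corollary \ref{cor:wi}, exactly as you observe in your closing paragraph.
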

\begin{proof}
We have the relation  
\beq\label{eqn:diff_giwi}
\g_i-\w_i=\frac{\w_i}{|\rho^+_i|}(|\rho^+_{i-1}|-|\rho^+_i|).
\eeq
Uniformly in $i\leq n$, $|\rho^+_i|=\Theta(n)$ and $\w_i\in(0,1)$, so it suffices to show $|\rho^+_{i-1}|-|\rho^+_i| = O(\frac{1}{1-\w_i})$. Define for $3\leq i \leq n$, 
\begin{equation}\label{defn:Ui}
U_i=\left(\g m - (m-n+2i-1)\right)^2-4(i-1)(m-n+i-1).
\end{equation}
Then $U_{i-1}-U_i=4(\g m -1)$, and by \eqref{eq:rho_def}, 
\beq\label{eqn:rho_asymp}
|\rho_i^+|=\frac12\left(\g m -(m-n+2i-1)+\sqrt{U_i}\right).
\eeq
We then note that $\frac{\sqrt{U_i}}{\g m - (m-n+2i-1)} =m_i^+-1$ by \eqref{defn:mi_pm} to arrive at 
\beq\label{eqn:diff_rho}
|\rho^+_{i-1}|-|\rho^+_i|
= 1+\frac{2(\g m -1)}{\sqrt{U_{i-1}}+\sqrt{U_{i}}}
=1+\frac{\frac{2(\g m -1)}{\g m - (m-n+2i-1)}}{(m_i^+-1)\left(1+\sqrt{1+\frac{4(\g m-1)}{U_i}}\right)}.
\eeq
Using the asymptotics $\g=(1+\sqrt{\la})^2+\sigma_n n^{-2/3}$ as $n\to\infty$, 
\beq\label{eqn:Ui_asymp}
U_i=4(\la^{-\frac12}+1)^2n^2\left[\frac{n-i}{n}+4\la^{-1}\left(\la^{-\frac12}+\frac{n-i}n\right)\sigma_nn^{-\frac23}+o(\sigma_nn^{-\frac23})\right].
\eeq
Thus, the ratio on the right hand side of \eqref{eqn:diff_rho} satisfies that its numerator is $O(1)$ while the expression under the square root in the denominator is $1+O(n^{-1})$. Both the big-O bounds are uniformly in $i$. Hence, the right hand side of \eqref{eqn:diff_rho} is of order $1+\frac{1}{m_i^+-1}$, where $\frac{1}{m_i^+-1}\geq 1$, by the definition of $m_i^+$. Therefore, 
\[
|\rho^+_{i-1}|-|\rho^+_i|
= O\left(\frac{1}{m_i^+-1}\right).
\]
Since $m_i^++m_i^-=2$,
\beqq
\frac1{m_i^+-1}=\frac2{m_i^+(1-\g_i)}=\frac2{m_i^+(1-\w_i)\left(1-\frac{\g_i-\w_i}{1-\w_i}\right)}
=\frac{2/m_i^+}{1-\w_i}\left(1+O\left(n^{-\frac13}\sigma_n^{-1}\right)\right),
\eeqq
following from \eqref{eqn:wi-gi_unif} and Corollary \ref{cor:wi}. We conclude $|\rho_{i-1}^+|-|\rho_i^+|=O\left(\frac1{1-\w_i}\right)$.
\end{proof}

One other quantity that comes up frequently throughout our calculations is the variance $\bE X_i^2$.  In the following lemma, we give upper and lower bounds for this quantity.
\begin{lemma}
The variance of $X_i^2$ satisfies the following properties for $3\leq i\leq n$:
\begin{enumerate}[(i)]
\item $\EE X_i^2=\Theta (\frac{\a\delta_i}{n})$ for all $i$,
\item $\EE X_i^2=O(n^{-1})$ uniformly in $i$,
\item $\EE X_i^2=\Omega(n^{-2})$ uniformly in $i$.
\end{enumerate}
\end{lemma}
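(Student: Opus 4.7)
The plan is to exploit the mean-zero and independence structure of $\a_{i-1}$ and $\b_i$ to write $\EE X_i^2$ in closed form in terms of $\tau_{i-1}, \d_i, |\rho_{i-1}^+|, |\rho_i^+|$, and then read off the three bounds from Lemma \ref{lem:rho_properties}(i) together with elementary bounds on $\tau_i$ and $\d_i$.

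First, I would unwind the definitions: $\a_{i-1}$ is a deterministic scalar multiple of $d_{i-1}$, hence of $a_{i-1}^2$, while $\b_i$ is a deterministic scalar multiple of $c_{i-1}$, hence of $b_{i-1}^2$. Since the bidiagonal representation from \cite{DumitriuEdelman} has all of the $\{a_j\}, \{b_j\}$ mutually independent, $\a_{i-1}$ and $\b_i$ are independent. Both are mean zero and, using $\var(d_j) = \var(c_j) = \a$, a direct computation gives
\[
\EE \a_{i-1}^2 = \frac{(m-n+i-1)\,\a}{|\rho_{i-1}^+|^2} = \frac{\a\,\tau_{i-1}}{|\rho_{i-1}^+|}, \qquad \EE \b_i^2 = \frac{(i-1)\,\a}{|\rho_i^+|^2} = \frac{\a\,\d_i}{|\rho_i^+|}.
\]
Expanding $X_i^2 = (1+\tau_{i-1})^2(\d_i \a_{i-1} + \b_i)^2$ and using independence to kill the cross term then yields the identity
\[
\EE X_i^2 = \a\,\d_i\,(1+\tau_{i-1})^2 \left[\frac{\d_i \tau_{i-1}}{|\rho_{i-1}^+|} + \frac{1}{|\rho_i^+|}\right].
\]

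Next I would invoke Lemma \ref{lem:rho_properties}(i) to get $|\rho_i^+|, |\rho_{i-1}^+| = \Theta(n)$ uniformly in $i$. This immediately gives the uniform bounds $\tau_i = (m-n+i)/|\rho_i^+| = O(1)$ and $\d_i = (i-1)/|\rho_i^+| = O(1)$, using $m-n+i \leq m = O(n)$ and $i-1 \leq n$. Hence $(1+\tau_{i-1})^2 = \Theta(1)$, while the bracketed quantity is $\Theta(1/n)$ uniformly: the term $1/|\rho_i^+|$ provides the $\Omega(1/n)$ lower bound and the boundedness of $\d_i \tau_{i-1}$ provides the $O(1/n)$ upper bound. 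This proves (i). For (ii), (i) combined with $\d_i = O(1)$ gives $\EE X_i^2 = O(n^{-1})$ uniformly. For (iii), the bound $i \geq 3$ forces $\d_i \geq 2/|\rho_i^+| = \Omega(n^{-1})$, which combined with (i) yields $\EE X_i^2 = \Omega(n^{-2})$ uniformly.

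The main point of care (not really an obstacle) is verifying the independence of $\a_{i-1}$ and $\b_i$ by tracing back through the definitions of $d_{i-1}$ and $c_{i-1}$ to the independent bidiagonal entries $a_{i-1}$ and $b_{i-1}$; once that is in hand, the rest of the argument is bookkeeping with Lemma \ref{lem:rho_properties}(i).
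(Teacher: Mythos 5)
Your proof is correct and takes essentially the same approach as the paper: you expand $\EE X_i^2$ using independence of $\a_{i-1}$ and $\b_i$ to get the exact formula $\EE X_i^2 = \a\d_i(1+\tau_{i-1})^2\bigl[\d_i\tau_{i-1}/|\rho_{i-1}^+| + 1/|\rho_i^+|\bigr]$ (the paper writes $\d_i\tau_{i-1}$ as $\w_i$), then apply Lemma~\ref{lem:rho_properties}\eqref{lem:rho_properties_rho+bound} and the uniform boundedness of $\tau_i,\d_i$ to read off all three asymptotics. The only cosmetic difference is that the paper routes (ii) and (iii) through $\d_i = \Theta((i-1)/n)$ applied to the conclusion of (i), whereas you argue (iii) directly from $\d_i \geq 2/|\rho_i^+|$, which amounts to the same thing.
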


\begin{proof}
From \eqref{defn:Xi}, we have
\beq\label{eqn:varXi}\begin{split}
\EE X_i^2=(1+\tau_{i-1})^2\EE(\delta_i\a_{i-1}+\b_i)^2
=\a\delta_i(1+\tau_{i-1})^2\left(\frac{\w_i}{|\rho_{i-1}^+|}+\frac{1}{|\rho_i^+|}\right).
\end{split}\eeq
By Lemma \ref{lem:rho_properties}, $|\rho_i^+|^{-1}=\Theta(n^{-1})$.  Furthermore, it follows directly from definitions that $\tau_i,\w_i$ are positive and bounded above by a constant, uniformly in $i$.  This yields part (i) of the lemma.  Parts (ii) and (iii) follow from the fact that $\delta_i=\frac{i-1}{|\rho_i^+|}=\Theta\left(\frac{i-1}{n}\right)$.
\end{proof}

\section{Expressing \texorpdfstring{$\log |\mathcal{D}_n|$}{log|Dn|} in terms of \texorpdfstring{$\log|E_n|$}{log|En|}} \label{sec:A.5}
Our goal in this section is to obtain a closed form asymptotic expansion for the quantity $\log|\mathcal{D}_n|-\log|E_n|$, accurate down to order $O(1)$. 
We will use this to obtain a CLT for $\log|\cD_n|$ in terms of a CLT for $\log|E_n|$.  

\begin{lemma}
Assume $\g=(1+\sqrt{\la})^2+\sigma_n n^{-2/3}$ for $(\log\log n)^2\ll\sigma_n\ll (\log n)^2$. The quantity $\log|\mathcal{D}_n|-\log|E_n|$ has the asymptotic expansion
\beq\label{eqn:diff_logDnEn}
\log|\mathcal{D}_n|-\log|E_n|
=C_\la n+\frac{1}{\la^{1/2}(1+\la^{1/2})}\sigma_n n^{1/3}
-\frac{2}{3\la^{3/4}(1+\la^{1/2})^2}\sigma_n^{3/2}+O(1),
\eeq
where
\beq
C_\la:=(1-\la^{-1})\log(1+\la^{1/2})+\log(\la^{1/2})+\la^{-1/2}.
\eeq
\end{lemma}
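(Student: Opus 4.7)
The starting observation is that $\log|\cD_n|-\log|E_n|$ is in fact deterministic. Indeed, identifying $M_{n,m}$ with its tridiagonal representation $T$ (same joint eigenvalue distribution), we have $|D_n|=|\det(M_{n,m}-\gamma m)|=m^n|\cD_n|$, and combining with $E_n=D_n/\prod_{j=1}^n|\rho_j^+|$ yields the exact identity
\[
\log|\cD_n|-\log|E_n| \;=\; \sum_{j=1}^n\log|\rho_j^+| - n\log m.
\]
Hence the lemma reduces to a purely analytic computation. I would split the right-hand side around the unperturbed value $\gamma=d_+$ as $(I)+(II)$, where
\[
(I)=\sum_{j=1}^n\log\tfrac{|\rho_j^+(d_+)|}{m},\qquad (II)=\sum_{j=1}^n\bigl[\log|\rho_j^+(\gamma)|-\log|\rho_j^+(d_+)|\bigr],
\]
with $|\rho_j^+(\tilde\gamma)|$ denoting $|\rho_j^+|$ with $\gamma$ replaced by $\tilde\gamma$ in its defining formula.

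For (I): a direct substitution, using the algebraic identity $(\la^{-1/2}+1-t)^2-t(\la^{-1}-1+t)=(\la^{-1/2}+1)^2(1-t)$ already exploited in Lemma~\ref{lem:wi}, gives $|\rho_j^+(d_+)|/n=(u+\la^{-1/2})(u+1)+O(n^{-1})$ with $u:=\sqrt{1-j/n}$. Although the summand is non-smooth in $t=j/n$ at the edge $t=1$, it is smooth in $u$. Approximating by a Riemann integral and changing variables to $u$ then yields
\[
\tfrac{1}{n}(I) = \log\la + \int_0^1 2u\log\bigl[(u+\la^{-1/2})(u+1)\bigr]\,du + o(1) = C_\la+o(1),
\]
where the $u$-integral is evaluated by standard integration by parts to $(1-\la^{-1})\log(1+\la^{-1/2}) - \tfrac12\la^{-1}\log\la + \la^{-1/2}$, which combines with $\log\la$ to give exactly $C_\la$. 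Euler–Maclaurin applied in the smooth $u$-variable then bounds the Riemann error by $O(1)$, so $(I)=C_\la n+O(1)$.

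For (II), I use $\partial_\gamma\log|\rho_j^+|=m/\sqrt{h_j}$, where $h_j(\tilde\gamma)=(|\rho_j^+(\tilde\gamma)|-|\rho_j^-(\tilde\gamma)|)^2$, to obtain the integral representation
\[
(II) = \int_0^{\xi}\sum_{j=1}^n\frac{m}{\sqrt{h_j(d_++s)}}\,ds,\qquad \xi:=\sigma_n n^{-2/3}.
\]
For each $s>0$, Lemma~\ref{lem:rho_properties}(ii) (applied with $\sigma_n$ replaced by $sn^{2/3}$) gives $\sqrt{h_j(d_++s)}=\Omega(ns^{1/2})$ uniformly in $j$, so the inner integrand is regular. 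Approximating the inner sum by a Riemann integral in $t=j/n$ and exploiting the same quadratic factorization as for (I), the inner sum evaluates in closed form to
\[
\sum_{j=1}^n\frac{m}{\sqrt{h_j(d_++s)}} = \frac{m}{a+cs}\!\left[\sqrt{1+cs}-\frac{\sqrt{s}}{\la^{1/4}(1+\la^{1/2})}\right]+(\text{Riemann error}),
\]
with $a:=\la^{-1/2}+1$ and $c:=1/(\la^{1/2}(1+\la^{1/2}))$ (so that $ac=\la^{-1}$). Integrating in $s$ from $0$ to $\xi$ and Taylor-expanding for $cs=o(1)$, the first bracket contributes $m\xi/a=\sigma_n n^{1/3}/[\la^{1/2}(1+\la^{1/2})]$ at leading order, while the subtracted $\sqrt{s}$ piece contributes $-(2m\xi^{3/2})/[3a\la^{1/4}(1+\la^{1/2})] = -2\sigma_n^{3/2}/[3\la^{3/4}(1+\la^{1/2})^2]$ (using $a=\la^{-1/2}(1+\la^{1/2})$). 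The remaining contributions are $O(m\xi^2)=O(\sigma_n^2 n^{-1/3})=o(1)$ under $\sigma_n\ll(\log n)^2$, so combining (I) and (II) yields the claim.

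The main obstacle is uniform control of the Riemann-to-integral errors, especially in (II), where the derivatives of $j\mapsto m/\sqrt{h_j(s)}$ grow as $s\to 0$ due to the edge degeneracy at $s=0$. The saving feature is that the worst-case pointwise error, concentrated near $j=n$, scales like $s^{-1/2}$, which is integrable against $ds$ on $(0,\xi)$; a careful split of the inner sum into a bulk regime ($n-j\gg n^{1/3}s^{1/2}$) and a boundary-layer regime, followed by $s$-integration, then yields a total Riemann error of size $O(1)$, as required.
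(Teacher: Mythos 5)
Your proposal is correct, and it takes a genuinely different route from the paper. The paper evaluates the deterministic sum $\sum_{j=1}^n\log(|\rho_j^+|/m)$ directly at $\gamma=d_++\sigma_n n^{-2/3}$, converts it to a single integral via the substitutions $t=c-x$ and $y=t-c_+c_-/(c_++c_-)$, computes the antiderivative of $\log(\sqrt y+r_\pm)$ in closed form, and then Taylor-expands the resulting boundary terms $A_1,\dots,A_5$ in the small parameter $\Delta_n=\sigma_n n^{-2/3}/d_+$. You instead anchor the sum at the edge $\gamma=d_+$, extracting $C_\la n+O(1)$ from the base term (I) using the clean quadratic factorization $(\la^{-1/2}+1-t)^2-t(\la^{-1}-1+t)=(\la^{-1/2}+1)^2(1-t)$ and the $u=\sqrt{1-t}$ change of variables, and produce both $\sigma_n$-dependent terms from the perturbation (II) via $\partial_\gamma\log|\rho_j^+|=m/\sqrt{h_j}$ integrated over $s\in(0,\xi)$. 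This gives a sharper conceptual separation of the extensive $O(n)$ term from the $\sigma_n$-corrections, and has the nice feature that the inner sum in (II) factors exactly because $\mu^2+\nu^2=(a+s/(2\la))^2$. Your stated closed form $\sqrt{1+cs}-\sqrt{s}/[\la^{1/4}(1+\la^{1/2})]$ is an $O(s^2)$-accurate (not exact) simplification of the true bracket $1+\tfrac{cs}{2}-\tfrac{\sqrt s}{\la^{1/4}(1+\la^{1/2})}\sqrt{1+s/(4\la^{1/2})}$, but the discrepancy contributes $O(m\xi^3)=o(1)$ and is harmless. Two small points of rigor: (a) the phrase ``Euler–Maclaurin in the smooth $u$-variable'' is a slight abuse, since the nodes $u_j=\sqrt{1-j/n}$ are not equispaced in $u$; the $O(1)$ Riemann error in (I) is more cleanly obtained by observing that $f'(t)\sim(1-t)^{-1/2}$ is integrable, so $\bigl|\sum_j f(j/n)-n\int_0^1 f\bigr|\le\int_0^1|f'|$; (b) in (II), the Riemann error for fixed $s$ is indeed $O(s^{-1/2})$ via $\int_0^1|\partial_t(m/\sqrt{h})|\,dt\lesssim (m/n)\,a^{-2}(bs)^{-1/2}$, and integrating against $ds$ over $(0,\xi)$ gives $O(\sqrt\xi)=o(1)$, actually better than the $O(1)$ you needed.
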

\begin{proof}
It follows from \eqref{defn:Ei} that
\beq
E_n=\frac{m^n\mathcal{D}_n}{\prod_{i=1}^n|\rho_i^+|}.
\eeq
Expanding $|\rho_i^+|$ using \eqref{eq:rho_def}, we obtain
\beq\begin{split}
	\cD_n
	&=E_n\prod_{i=1}^n\left(\frac12(\gamma-(1-\la))-\frac{i-\frac12}{m}+\sqrt{\left(\frac12(\gamma-(1-\la))-\frac{i-\frac12}{m}\right)^2-\left(1-\la+\frac{i-1}{m}\right)\left(\frac{i-1}{m}\right)}\right).
\end{split}\eeq
Thus,
\beq\begin{split}
	\log|\cD_n|-\log|E_n|=&\sum_{i=1}^n
	\log\left(\tfrac12(\gamma-(1-\la))-\tfrac{i-\frac12}{m}
	+\sqrt{\left(\tfrac12(\gamma-(1-\la))-\tfrac{i-\frac12}{m}\right)^2-\tfrac{i-1}{m}\left(1-\la+\tfrac{i-1}{m}\right)}\right).
\end{split}\eeq
Observe that the argument of the log is bounded away from zero, since is it equal to $|\rho_i^+|/m$ where $|\rho_i^+|=\Theta(n)$ by Lemma \ref{lem:rho_properties}(\ref{lem:rho_properties_rho+bound}).  For large $n$, we approximate the above sum by the integral
\beq\label{eqn:int_log}
\frac{n}{\la}\int_0^\la \log\left(c-x+\sqrt{(c-x)^2-(1-\la+x)x}\right)dx
\eeq
for $c=\frac12(\gamma-(1-\la))$, incurring an error of order $O(1)$ in the process. Note that 
\beqq
c-x+\sqrt{(c-x)^2-(1-\la+x)x}=\left(\sqrt{\frac{c^2}{\g}-x}+r_+\right)\left(\sqrt{\frac{c^2}{\g}-x}+r_-\right),
\eeqq
where $r_\pm=\frac12\left(\g\pm(1-\la)\right)\g^{-1/2}$. For every $s\in\bR$,
\beqq
\int\log(\sqrt y +s)dy=(y-s^2)\log(\sqrt y +s)-\frac12y+s\sqrt y+C.
\eeqq
Thus, using $s=r_\pm$ together with the change of variable $y=\frac{c^2}{\g}-x$, we find that   \eqref{eqn:int_log} is equal to 
\beq\label{eqn:A}
\frac n\la A=\frac n\la(A_1+A_2+A_3+A_4+A_5),
\eeq
where
\beq\begin{split}
	A_1&=(a+\la-r_+^2)\log(\sqrt{a+\la}+r_+),\qquad
	A_2=-(a-r_+^2)\log(\sqrt{a}+r_+),\\
	A_3&=(a+\la-r_-^2)\log(\sqrt{a+\la}+r_-),\qquad
	A_4=-(a-r_-^2)\log(\sqrt{a}+r_-),\\
	A_5&=-\la+(r_++r_-)(\sqrt{a+\la}-\sqrt{a}),
\end{split}\eeq
and $a=\frac{c^2}{\g}-\la$. Therefore, 
\beq\label{eqn:diff_log}
\log|\cD_n|-\log|E_n|=\frac n\la A+O(1).
\eeq
We now evaluate each of $A_i$ asymptotically, using $\g=(1+\sqrt{\la})^2+\sigma_n n^{-2/3}$ as given. Setting $\Delta_n:=\frac{\sigma_n n^{-2/3}}{(1+\la^{1/2})^2}$, we have 
\begin{align*}
	r_+&=1+\tfrac12\la^{1/2}\D+O(\D^2), \qquad r_-=\la^{1/2}+\tfrac12\D+O(\D^2),\\
	a&=\la^{1/2}\D+\tfrac14 (1-\la^{1/2})^2\D^2+O(\D^3).
\end{align*}
Therefore,  $A_3=O(\Delta_n^2)$, and 
\begin{align*}
	A_1&=(\la-1)\left(\log(1+\la^{1/2})+\tfrac12\D\right)+O(\D^2),\\
	A_2&=\la^{1/4}\D^{1/2}+\left(\tfrac18 \la^{-1/4}-\tfrac14 \la^{1/4}-\tfrac{1}{24} \la^{3/4}\right)\D^{3/2}+O(\D^2),\\
	A_4&=\la\log(\la^{1/2})+\la^{3/4}\D^{1/2}
	+(\tfrac{11}{24}\la^{1/4}-\tfrac34\la^{3/4}+\tfrac18\la^{5/4})\D^{3/2}+O(\D^2),\\
	A_5&=\la^{1/2}-\la^{1/4}(1+\la^{1/2})\D^{1/2}+\tfrac12(1+ \la^{1/2})^2\D
	-\tfrac{1}{8\la^{1/4}}(1+\la^{1/2})^3\D^{3/2}+O(\D^2).
\end{align*}
Substituting the values of $A_i$ into \eqref{eqn:A}, then by \eqref{eqn:diff_log}, we obtain the statement \eqref{eqn:diff_logDnEn} as in the lemma. 
\end{proof}

We now move to the step of approximating $\log|E_n|$.

\section{Linear approximation for \texorpdfstring{$\log |E_n|$}{log|En|}}\label{sec:A.4}
Recall Definition \ref{defn:Ri} of $R_i$. Assuming that $R_i$ for $3 \leq i \leq n$ are $o(n^{-1/3})$ uniformly in $i$, then Taylor expansion of the logarithm implies
\beq
\begin{split}
	\log|E_n|
	&=\sum_{i=3}^n\log|1-R_i|+\log|E_2|
	=\sum_{i =3}^{n}(-R_i-R_i^2/2+o(n^{-1}))+\log|E_2|.
\end{split}
\eeq
The following lemma shows that the uniform bound of $R_i$ indeed holds. 
\begin{lemma}\label{lem:unif_Ri}
Assume $(\log\log n)^2\ll\sigma_n\ll(\log n)^2$. With probability $1 - O(\log^{-5}n)$,
\beqq
\max_{2 \leq i \leq n} |R_i| = o(n^{-1/3}).
\eeqq
\end{lemma}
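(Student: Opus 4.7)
The plan is to prove $\max_i |R_i| = o(n^{-1/3})$ by reducing to the linear approximation $L_i$: first establish uniform concentration of $\max_i |L_i|$ via sub-gamma tails for sums of independent random variables, and then transfer the bound to $R_i$ via a bootstrap / Gronwall argument applied to the difference recursion $D_i = R_i - L_i$.

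For the concentration step, I would start from the decomposition \eqref{eqn:Lj_full}, which expresses $L_j$ as a weighted sum of the independent sub-gamma variables $X_i$ plus the small boundary corrections $\alpha_j - \omega_3\cdots\omega_j\,\alpha_2$. By Lemma \ref{lem:SG} and closure of sub-gamma families under weighted sums of independent variables, $L_j \in \SG(V_j,U_j)$ with $V_j = \sum_{i=3}^{j-1}(\omega_{i+1}\cdots\omega_j)^2 v_i + v_j$ and $U_j = O(n^{-1})$. Corollary \ref{cor:wi} forces the weights $\omega_{i+1}\cdots\omega_j$ to decay geometrically with effective correlation time $\tau \sim n^{1/3}\sigma_n^{-1/2}$ in the edge regime, and combined with $v_i = O(n^{-1})$ this truncates the sum to give $V_j = O(n^{-2/3}\sigma_n^{-1/2})$. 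Applying the tail bound in Lemma \ref{lem:SG_boucheron}, and then taking a union bound over $j$ refined by first discretizing on the scale $\tau$ and interpolating within blocks through the contraction $L_j = \omega_j L_{j-1} + \xi_j$, should yield $\max_{3 \leq j \leq n}|L_j| = o(n^{-1/3})$ with probability $1 - O(\log^{-5} n)$. The boundary terms $\alpha_j$ and $\omega_3\cdots\omega_j\,\alpha_2$ are treated by the same sub-gamma tail with substantially smaller variance.

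For the transfer step, setting $D_i = R_i - L_i$ and subtracting the two recursions gives $D_i = \omega_i D_{i-1} + \varepsilon_i$, hence $|D_i| \leq \sum_{k=3}^i (\omega_{k+1}\cdots\omega_i)\,|\varepsilon_k|$. From \eqref{eqn:ei} the error $\varepsilon_k$ splits into the deterministic discrepancy $-(\gamma_k - \omega_k)$, bounded uniformly by $O(n^{-2/3}\sigma_n^{-1/2})$ via Lemma \ref{lem:gi-wi}; the independent cross product $\alpha_{k-1}\beta_k$, of sub-gamma size $O(n^{-1})$ up to logarithmic factors from a tail bound; and nonlinear terms $\alpha_{k-1}\beta_k\,R_{k-1}/(1-R_{k-1})$ and $\tau_{k-1}\delta_k\,R_{k-1}^2/(1-R_{k-1})$ that are controlled under an a priori bootstrap $\max_{k<i}|R_k|\leq 1/2$. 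Summing the weights with $\sum_k \omega_{k+1}\cdots\omega_i = O(1/(1-\omega_i)) = O(n^{1/3}\sigma_n^{-1/2})$ yields $\max_i |D_i| = o(n^{-1/3})$ on the event of Step 1, and the bootstrap closes inductively because $|R_i| \leq |L_i| + |D_i| = o(n^{-1/3}) \ll 1/2$. The triangle inequality then delivers the conclusion. The hardest piece will be the uniform control of $|L_j|$ when $\sigma_n$ is near its lower threshold $(\log\log n)^2$: a naive union bound over $n$ indices loses a $\sqrt{\log n}$ factor against $V_j^{1/2} \sim n^{-1/3}\sigma_n^{-1/4}$ that is only absorbed when $\sigma_n \gg (\log n)^2$, so the argument must exploit the strong temporal correlations of $L_j$ through its near-unit-root AR(1) structure — chaining over dyadic scales of the correlation time $\tau$, or applying a Doob-type maximal inequality to a suitable martingale decomposition — in order to reduce the effective number of independent representatives from $n$ down to a logarithmic number.
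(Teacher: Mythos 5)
Your proposal follows the same route as the paper's proof in essentially every respect: expand $R_i$ by iterating the recursion $R_i = \xi_i + \omega_i R_{i-1} + \varepsilon_i$, isolate the stochastic linear part $L_i$ (equivalently, bound the difference $D_i = R_i - L_i$ satisfying $D_i = \omega_i D_{i-1} + \varepsilon_i$), control each error piece with the uniform estimates on $\gamma_i - \omega_i$ and sub-gamma tails, close the bootstrap inductively, and — crucially — handle $\max_j |L_j|$ by a maximal inequality rather than a naive union bound. You also correctly identify the one place where the argument is genuinely delicate: at the scale $\sigma_n \ll (\log n)^2$, the union-bound loss of $\sqrt{\log n}$ against the edge variance $V_j^{1/2} \asymp n^{-1/3}\sigma_n^{-1/4}$ is fatal, and some form of chaining exploiting the near-unit-root AR(1) structure is required.

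The two places where your sketch diverges from the paper's execution are worth noting, though neither is a conceptual gap. First, you describe the chaining step only schematically ("dyadic scales of the correlation time $\tau$, or a Doob-type maximal inequality"). The paper's concrete device is to reparameterize $Y_j = (\omega_{i+1}\cdots\omega_j)(Y_i + \tilde Y_j^i)$, where $\tilde Y_j^i = \sum_{k=i+1}^j X_k/(\omega_{i+1}\cdots\omega_k)$ is a sum of independent centered sub-gamma terms, and then apply Etemadi's maximal inequality (a Lévy–Ottaviani variant, not Doob's) on blocks $[n_{k-1}, n_k]$ of length $\asymp n^{1/3}\log^{-2}n$ covering the window $i > n - n^{1/3}\log^{3}n$, giving $K \lesssim \log^5 n$ anchor points; the remaining bulk indices $i \leq n - n^{1/3}\log^{2+\eta}n$ are handled by a plain union bound because there $1-\omega_i$ is large enough that the variance is already suppressed. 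If you try to make "dyadic chaining" precise you will be driven to essentially this block structure, and Lemma \ref{lem:prod_wi} (the products $\omega_{i+1}\cdots\omega_j$ stay $\geq 1/2$ within a block) is the quantitative fact that makes the within-block interpolation work. Second, rather than the explicit a priori bootstrap you propose, the paper sidesteps the circular dependence of $\varepsilon_k$ on $R_{k-1}$ by defining an auxiliary truncated process $\tilde R_i$ (feeding $\bar R_{i-1} = \phi_{n^{-1/3}/2}(\tilde R_{i-1})$ into the nonlinear terms), proving the uniform bound for $\tilde R_i$ unconditionally, and then observing that on the good event $\tilde R_i = R_i$ for all $i$. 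Your induction is logically equivalent, but the truncation formulation is cleaner because one never conditions on the realization of $R$; you would have to phrase your induction carefully to avoid measurability issues when you invoke tail bounds on the random coefficients $\alpha_{k-1}, \beta_k$ alongside the inductive hypothesis on $R_{k-1}$.
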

We include its proof in Section \ref{sec:unif}. Assuming the lemma, we rewrite \eqref{defn:wi_ei} as
\beq
\e_i = -(\g_i-\w_i)+\a_{i-1}\b_i+(\a_{i-1}\b_i+\a_{i-1}\delta_i+\tau_{i-1}\b_i)\frac{R_{i-1}}{1-R_{i-1}}+\w_i\frac{R_{i-1}^3}{1-R_{i-1}}+\w_iR_{i-1}^2,
\eeq
and set for $3\leq i\leq n$,
\beq
R^{(1)}_i = \frac{R_{i-1}}{1 - R_{i-1}}, \quad R^{(2)}_i = \w_{i} \frac{R_{i-1}^3}{1 -R_{i-1}}, \quad R^{(3)}_i = \w_{i} R_{i-1}^2.
\eeq
Then from the recursion \eqref{eqn:Rirecursion}, we obtain the decomposition
\beq\label{eqn:Ri_AB}
R_i=L_i+\w_i\dots\w_3R_2-A_{0i}+B_{0i}+B_{1i}+B_{2i}+B_{3i},
\eeq	
where
\beq\label{def:A0i}
A_{0i}=\g_i - \w_i + \w_i(\g_{i-1} - \w_{i-1}) +\dots+\w_i\dots\w_4(\g_3 - \w_3),
\eeq and
\begin{align*}
	B_{0i} &= \left(\a_{i-1}+ ( \tau_{i-1} + \a_{i-1} ) R^{(1)}_i\right) \beta_i  + \w_i \left(\a_{i-2} +( \tau_{i-2} + \a_{i-2} )R^{(1)}_{i-1}\right) \beta_{i-1} \\
	&\quad + \dots + \w_i\dots\w_4 \left(\a_2  + (\tau_2  + \a_2 ) R^{(1)}_3\right) \beta_3,\\
	B_{1i} &= \a_{i-1}\delta_i R^{(1)}_i + \w_i\a_{i-2}\delta_{i-1}R^{(1)}_{i-1} + \dots + \w_i\dots \w_4\a_2 \delta_3 R^{(1)}_3,\\
	B_{2i} &= R^{(2)}_i + \w_iR^{(2)}_{i-1} + \dots + \w_i\dots\w_4 R^{(2)}_3,\\
	B_{3i} &= R^{(3)}_i + \w_iR^{(3)}_{i-1} + \dots + \w_i\dots\w_4 R^{(3)}_3.
\end{align*}
Substituting this into expression for $\log|E_n|$, we have
\beq\label{eqn:logEn}
\log|E_n|=-\sum_{i=3}^nL_i+\sum_{i =3}^nA_{0i}-\sum_{i=3}^n B_{3i} - \sum_{i=3}^n \left(\w_i\dots\w_3R_2+B_{0i}+B_{1i}+B_{2i}\right) - \frac12\sum_{i=3}^nR_i^2+\log|E_2|+o(1).
\eeq
The following three lemmas state that the last three quantities in \eqref{eqn:logEn} are $O(1)$ with probability $1 - o(1)$. Their proofs are included in the \hyperref[sec:technical_lemmas]{Appendix}. 
\begin{lemma}\label{lem:Ri2_bound}
	$\sum_{i=3}^n R_i^2 = O(1)$ with probability $1 - o(1)$.
\end{lemma}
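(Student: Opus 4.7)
The plan is to split $R_i$ into a main piece plus a small correction via the decomposition \eqref{eqn:Ri_AB}: write $R_i = L_i + F_i$ with $F_i := \w_3\cdots\w_i R_2 - A_{0i} + B_{0i} + B_{1i} + B_{2i} + B_{3i}$, so that
\beqq
\sum_{i=3}^n R_i^2 \leq 2\sum_{i=3}^n L_i^2 + 2\sum_{i=3}^n F_i^2,
\eeqq
and show each sum on the right is $O_{\PP}(1)$.

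For the main piece, I would prove $\EE\sum_{i=3}^n L_i^2 = O(1)$ and conclude $\sum L_i^2 = O_{\PP}(1)$ by Markov's inequality. Writing $L_i = Y_i + \a_i - \w_3\cdots\w_i\, \a_2$ via \eqref{eqn:Lj_full}, the pairwise independence of the $X_j$ gives $\EE Y_i^2 = \sum_{j=3}^i c_{ij}^2\, \EE X_j^2$, where $c_{ij} := \w_{j+1}\cdots\w_i$. Exchanging the order of summation reduces the task to bounding $\sum_{j=3}^n \EE X_j^2 \cdot \sum_{i \geq j} c_{ij}^2$. Using $c_{ij}^2 \leq \exp\bigl(-2\sum_{l=j+1}^i(1-\w_l)\bigr)$ together with the two-regime lower bound $1-\w_l \gtrsim \max(n^{-1/3}\sigma_n^{1/2},\, \sqrt{(n-l)/n})$ from Corollary \ref{cor:wi}, one shows $\sum_{i \geq j} c_{ij}^2 = O(\sqrt{n/(n-j)})$ in the bulk $j \leq n - n^{1/3}\sigma_n$ and $O(n^{1/3}\sigma_n^{-1/2})$ near the edge. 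Combined with $\EE X_j^2 = O(j/n^2)$, a direct integral estimate gives $\EE \sum_i Y_i^2 = O(1)$. The remaining $\a_i$ and $\w_3\cdots\w_i\, \a_2$ contributions are negligible since $\EE \a_i^2 = O(1/n)$.

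For the correction $\sum F_i^2$, I work on the event $\{\max_i |R_i| \leq \delta_n\}$ with $\delta_n = o(n^{-1/3})$ from Lemma \ref{lem:unif_Ri}, and bound each of the six pieces of $F_i$. The boundary term satisfies $\sum_i (\w_3\cdots\w_i)^2 R_2^2 \lesssim R_2^2 /(1-\w_n^2) = o(1)$. For the deterministic $A_{0i}$, iterating the recurrence $A_{0i} = \w_i A_{0,i-1} + (\g_i - \w_i)$ and invoking Lemma \ref{lem:gi-wi} yields $|A_{0i}| \lesssim 1/(n(1-\w_i)^2)$, so Corollary \ref{cor:wi} gives a bulk contribution of $\sum 1/(n-i)^2 = O(1)$ and an edge contribution of $n^{1/3}\sigma_n \cdot n^{-2/3}/\sigma_n^2 = o(1)$. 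The main parts of $B_{0i}$ are orthogonal sums $\sum_k c_{ik}\a_{k-1}\b_k$ of independent mean-zero products with $\EE\sum_i(\cdot)^2 = O(1/n)$ by variance swapping. The quadratic/cubic feedback pieces $B_{2i}, B_{3i}$ carry factors of $R_{k-1}^2, R_{k-1}^3$ bounded by $\delta_n^2, \delta_n^3$ on the good event; combined with $\sum_k c_{ik} \leq 1/(1-\w_i)$, they give $\sum_i |B_{3i}|^2 \lesssim \delta_n^4(n\log n + n) = o(1)$, and similarly for $B_{2i}$. The cross terms in $B_{0i}, B_{1i}$ involving $R^{(1)}_k$ are treated analogously.

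The main obstacle will be the $R^{(1)}_k$-dependent pieces of $B_{0i}$ and $B_{1i}$: a naive uniform bound $|R^{(1)}_k| \leq 2\delta_n$ followed by Cauchy--Schwarz only gives $\sum_i(\cdot)^2 = O(\delta_n^2\, n)$, which is $o(n^{1/3})$ rather than the $O(1)$ we need. To close the argument one must exploit the near-independence of $R^{(1)}_k$ from the fresh noise $\a_{k-1}, \b_k$ (since $R_{k-1}$ depends only on $d_1,\dots,d_{k-1}$ and $c_1,\dots,c_{k-2}$, giving only a single dependent index) so that the sum $\sum_k c_{ik}\a_{k-1}\delta_k R^{(1)}_k$ retains approximate orthogonality and contributes only $o(1)$ after summing in $i$. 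This bookkeeping, driven by the fact that the tridiagonal representation of Laguerre matrices couples adjacent entries of the bidiagonal factor, is the key technical subtlety of the argument.
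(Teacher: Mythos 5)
Your decomposition $R_i = L_i + F_i$ and the plan to bound $\sum L_i^2$ via expectation and $\sum F_i^2$ on the good event from Lemma \ref{lem:unif_Ri} is a genuinely different route from the paper's. The paper instead works with a truncated process $\hR_i$ (which equals $R_i$ with probability $1 - O(\log^{-5}n)$), establishes a recursive $L^4$-norm inequality of the form $\|\hR_i\|_4 \leq \frac{C\a^{1/2}+o(1)}{\sqrt{n(1-\w_i)}} + o(1)\max_{j<i}\|\hR_j\|_4$, deduces $\|\hR_i\|_4 \lesssim 1/\sqrt{n(1-\w_i)}$ by induction, and then computes $\sum_i \|\hR_i\|_4^2 = O(1)$ directly. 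Your approach trades the recursion for a term-by-term decomposition, which is cleaner conceptually but puts more pressure on the feedback terms.

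There is a gap in your proposal as stated, but it closes more simply than you suggest. The $O(\delta_n^2 n)$ bound you obtained for the $R^{(1)}_k$-dependent pieces of $B_{0i},B_{1i}$ comes from a suboptimal Cauchy--Schwarz: estimating $\sum_k c_{ik}|\b_k|$ as $(\sum_k c_{ik}^2)^{1/2}(\sum_k \b_k^2)^{1/2}$, which discards the decay of $c_{ik}$ in the second factor. If instead you split the weight as $c_{ik}^{1/2}\cdot c_{ik}^{1/2}|\b_k|$, you get
\beq
\Bigl|\sum_k c_{ik}\tau_{k-1}\b_k R^{(1)}_k\Bigr|^2 \lesssim \frac{\delta_n^2}{1-\w_i}\sum_k c_{ik}\b_k^2,
\eeq
and after swapping the order of summation and using $\sum_{i\geq k}c_{ik}\lesssim 1/(1-\w_k)$ together with $1/(1-\w_i)\lesssim 1/(1-\w_n)$, the total is $\lesssim \frac{\delta_n^2}{1-\w_n}\sum_k \frac{\b_k^2}{1-\w_k}$. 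Since $\EE \sum_k \b_k^2/(1-\w_k) = O(1)$, this is $O(\delta_n^2 n^{1/3}\sigma_n^{-1/2}) = o(1)$ on a high-probability event, and the same trick handles the $\a_{k-1}\delta_k R^{(1)}_k$ pieces of $B_{1i}$. No ``near-independence'' argument is needed. And in fact the near-independence you invoke is an overstatement: $R^{(1)}_k$ and $\a_{k-1}$ both depend on $d_{k-1}$ (you correctly note the single shared index), so they are genuinely dependent, not approximately orthogonal; pursuing that route would have required its own decoupling argument. Fortunately the uniform bound plus the weighted Cauchy--Schwarz makes it moot.
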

\begin{lemma}\label{lem:AB_bound}
	$\sum_{i=3}^n \w_{i}\dots\w_3R_2+B_{0i}+B_{1i}+B_{2i} = O(1)$ with probability $1 - o(1)$. 
\end{lemma}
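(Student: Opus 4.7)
I would bound each of the four summands separately, targeting $O(1)$ in probability. A unifying step for the three $B$-sums is to exchange the order of summation. Setting
\beq
S_k := \sum_{i=k}^n \w_i \w_{i-1} \cdots \w_{k+1}
\eeq
(with the $i=k$ term the empty product $1$), I get
\beq
\sum_{i=3}^n B_{0i} = \sum_{k=3}^n \bigl[\a_{k-1} + (\tau_{k-1}+\a_{k-1}) R^{(1)}_k\bigr] \b_k S_k, \quad \sum_{i=3}^n B_{1i} = \sum_{k=3}^n \a_{k-1} \d_k R^{(1)}_k S_k, \quad \sum_{i=3}^n B_{2i} = \sum_{k=3}^n R^{(2)}_k S_k.
\eeq
Monotonicity of $\w_i$ in $i$ together with Corollary \ref{cor:wi} gives the uniform bound $S_k \leq 1/(1-\w_k)$, which is $O(\sqrt{n/(n-k)})$ for $k \leq n - n^{1/3}\sigma_n$ and $O(n^{1/3}\sigma_n^{1/2})$ otherwise; in particular $\sum_k S_k = O(n)$. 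These estimates drive the rest of the argument.

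\textbf{Boundary term.} From the explicit formulas for $D_1, D_2$ and $|\rho_i^+| = \Theta(n)$ in Lemma \ref{lem:rho_properties}, both $|E_1|$ and $|E_2|$ are $O_p(1)$ and $|E_1|$ is bounded away from zero with probability $1 - o(1)$, so $|R_2| = O_p(1)$. Since $\w_j \leq 1 - C_1 \sqrt{(n-j)/n}$ uniformly for $j \leq n - n^{1/3}\sigma_n$, the product $\w_3 \w_4 \cdots \w_i$ is dominated by $(1-C_1/\sqrt 2)^{\min(i,n/2)-2}$, giving $\sum_{i=3}^n \w_i \cdots \w_3 = O(1)$. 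Thus the boundary contribution is $O_p(1)$.

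\textbf{The $B_{0i}$ sum.} Split into $\sum_k \a_{k-1}\b_k S_k$ and a correction carrying $R^{(1)}_k$. For the main piece, $\a_{k-1}$ depends only on $a_{k-1}$ and $\b_k$ only on $b_{k-1}$, so $\{\a_{k-1}\b_k\}$ are pairwise independent across $k$; combining $E[\a_{k-1}^2]E[\b_k^2] = O(k/n^3)$ from Lemma \ref{lem:SG} with the $S_k$ bounds yields $\mathrm{Var}\bigl(\sum_k \a_{k-1}\b_k S_k\bigr) = \sum_k S_k^2 \cdot O(k/n^3) = o(1)$, so Chebyshev gives $o_p(1)$. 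For the correction, the crucial independence is that $R_{k-1}$ (hence $R^{(1)}_k$) depends only on $\{a_j\}_{j\leq k-1}$ and $\{b_j\}_{j\leq k-2}$ and is therefore independent of $\b_k$. This yields a sum of conditionally mean-zero terms whose predictable quadratic variation, after invoking the uniform bound $|R^{(1)}_k| = o(n^{-1/3})$ from Lemma \ref{lem:unif_Ri}, is $o(1)$; concentration then gives $o_p(1)$.

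\textbf{The $B_{1i}$ and $B_{2i}$ sums, and the main obstacle.} For $B_{2i}$ simply use $|R^{(2)}_k| = |\w_k R_{k-1}^3/(1-R_{k-1})| = o(n^{-1})$ uniformly (from Lemma \ref{lem:unif_Ri}) together with $\sum_k S_k = O(n)$ to obtain $o_p(1)$. The $B_{1i}$ sum is the main obstacle, because $\a_{k-1}$ and $R^{(1)}_k$ are \emph{not} independent: the recursion gives $R_{k-1} = \a_{k-1} + \bigl[\b_{k-1}(1+\tau_{k-2}) + \a_{k-2}\d_{k-1}\bigr] + \w_{k-1}R_{k-2} + \e_{k-1}$, so $\a_{k-1}$ enters $R_{k-1}$ (and therefore $R^{(1)}_k$) linearly. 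I would Taylor expand $R^{(1)}_k = R_{k-1} + R_{k-1}^2/(1-R_{k-1})$, substitute the recursion, and isolate the $\a_{k-1}^2$ piece. Its expectation $\sum_k E[\a_{k-1}^2]\d_k S_k = \sum_k O(1/n)\cdot O(k/n)\cdot S_k$ is $O(1)$ by a direct computation on the three ranges $k \leq n/2$, $n/2 < k \leq n - n^{1/3}\sigma_n$, $k > n - n^{1/3}\sigma_n$. The random fluctuation around this expectation, together with all cross terms $\a_{k-1}\times(\text{quantity independent of }a_{k-1})$, have mean zero and are controlled by the same martingale/variance arguments as in the $B_{0i}$ case; the quadratic remainder involving $R_{k-1}^2/(1-R_{k-1})$ is absorbed using Lemma \ref{lem:unif_Ri}. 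The genuinely $O(1)$ (non-vanishing) part of the whole statement comes precisely from $\sum_k E[\a_{k-1}^2]\d_k S_k$.
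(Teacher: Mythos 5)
Your proposal is correct in its main ideas and takes a genuinely different route from the paper. The paper's proof replaces $R_i$ and the $B_{ji}$ by the truncated (``hatted'') versions $\hR_i$, $\hB_{0i}$, $\hB_{1i}$, $\phi_{2/(n(1-\w_i))}(B_{2i})$ introduced at the start of Section~\ref{sec:technical_lemmas}, uses equation~\eqref{eqn:hRvR} to transfer between processes, and then bounds the $L_1$ norm of each piece using the $L_4$ estimates of $\hR_i$, $\hB_{0i}$, $\hB_{1i}$ (from the proof of Lemma~\ref{lem:Ri2_bound}) plus Markov's inequality. Your approach instead swaps the order of summation to produce sums $\sum_k (\cdot)\,S_k$ with $S_k = g_{k+1}$, then exploits the independence structure directly: pairwise independence of $\{\a_{k-1}\b_k\}$ for the main piece, a martingale structure (via conditional mean zero given the past) for the correction carrying $R^{(1)}_k$ in $B_{0i}$, and an explicit extraction of the $\a_{k-1}^2$ contribution in $B_{1i}$ by substituting the recursion~\eqref{eqn:Rirecursion}. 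This is more hands-on and has the virtue of making visible exactly where the non-vanishing $O(1)$ contribution comes from (the $\sum_k \bE[\a_{k-1}^2]\d_k S_k$ term in $B_{1i}$); the paper's proof is more modular, as it reuses bounds already built for Lemmas~\ref{lem:Ri2_bound} and~\ref{lem:B3i_star_hat}.

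There is one point you should make explicit. You repeatedly invoke the uniform bound $\max_i|R_i|=o(n^{-1/3})$ from Lemma~\ref{lem:unif_Ri} inside variance and ``predictable quadratic variation'' computations for a martingale (and for bounding the remainders $R_{k-1}^2/(1-R_{k-1})$ and $R_{k-1}^3/(1-R_{k-1})$). But that bound holds only on a high-probability event, not deterministically, and conditioning a martingale sum on a future event destroys the martingale/adaptedness structure. The paper handles this carefully by introducing the hard truncations $\phi_u(\cdot)$ and working throughout with the process $\hR_i$, which satisfies the needed bounds pathwise and agrees with $R_i$ on the good event. Your argument can be made rigorous in the same spirit by stopping the martingales at the first time $|R_{k-1}|>n^{-1/3}$ (or by importing the truncated process), and you should say so: as written, the step from ``$|R^{(1)}_k|=o(n^{-1/3})$ w.h.p.'' to ``the predictable quadratic variation is $o(1)$'' is not valid without it. The rest — the exchange of summation, the identification of which pairs of variables are independent, the $O(k/n^3)$ variance estimates, and the computation showing $\sum_k \bE[\a_{k-1}^2]\d_k S_k = O(1)$ — all check out against the quantities $g_i$, $\w_i$, $\d_i$, $|\rho_i^\pm|$ as controlled by Lemmas~\ref{lem:rho_properties}, \ref{lem:wi}, \ref{cor:wi}, and \ref{lem:gi_bounds}.
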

\begin{lemma}\label{lem:E2_bound}
	$\log|E_2| = O(1)$ with probability $1 - o(1)$.
\end{lemma}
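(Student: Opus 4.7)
The plan is to show that with probability $1 - o(1)$, $E_2$ lies in a compact subset of $(0,\infty)$, which immediately gives $\log|E_2| = O(1)$. Since $E_2 = D_2/(|\rho_1^+||\rho_2^+|)$ and $D_2$ depends only on the three independent chi-squared variables $a_1^2, a_2^2, b_1^2$, this reduces to an elementary concentration argument; no recursive machinery is required.

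First, I would use the recursion \eqref{eq:D_recursion} with the convention $D_0 = 1$ to write
\beqq
D_2 = (a_2^2 + b_1^2 - \gamma m)(a_1^2 - \gamma m) - a_1^2 b_1^2 = (a_1^2-\gamma m)(a_2^2-\gamma m) - b_1^2\,\gamma m.
\eeqq
Using $\EE a_1^2 = m-n$, $\EE a_2^2 = m-n+1$, $\EE b_1^2 = 1$, together with $\gamma = d_++\sigma_n n^{-2/3}$, $d_+=(1+\la^{1/2})^2$, and $n/m = \la + O(n^{-1})$, one finds $(\EE a_i^2)-\gamma m = -2(1+\la^{-1/2})n + O(\sigma_n n^{1/3})$ for $i=1,2$, while $(\EE b_1^2)\gamma m = O(n)$. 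Hence the deterministic (mean-substituted) value of $D_2$ equals $4(1+\la^{-1/2})^2 n^2 (1+o(1))$.

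On the denominator side, the proof of Lemma \ref{lem:rho_properties}(i) shows that both $|\rho_1^+|$ and $|\rho_2^+|$ have leading term $2(1+\la^{-1/2})n$, so $|\rho_1^+||\rho_2^+| = 4(1+\la^{-1/2})^2 n^2 (1+o(1))$ as well. Thus, up to stochastic fluctuations, $E_2 = 1+o(1)$.

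To handle the fluctuations, the sub-gamma property of rescaled chi-squared variables gives $|a_i^2-\EE a_i^2| = O(n^{1/2}\log n)$ for $i=1,2$ and $|b_1^2-1| = O(\log n)$, each with probability $1-O(n^{-1})$. Substituting these bounds and tracking magnitudes in the above expression for $D_2$, the total stochastic correction is at most $O(n^{3/2}\log n) = o(n^2)$. Therefore, with probability $1-o(1)$, $|D_2| = \Theta(n^2)$ and $|E_2| = \Theta(1)$, from which $\log|E_2| = O(1)$ follows. There is no genuine obstacle here beyond careful bookkeeping; the only subtlety is checking that the leading coefficients in $D_2$ and $|\rho_1^+||\rho_2^+|$ match in order of magnitude so that $E_2$ stays bounded away from zero rather than degenerating, which the explicit computation above confirms.
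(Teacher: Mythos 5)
Your proof is correct, but it takes a genuinely different and more self-contained route than the paper's. The paper first notices the algebraic identity $E_1 = \alpha_1 - 1$ (which follows from $|\rho_1^+| = \gamma m - (m-n+1)$) and then uses the definition $R_2 = 1 + E_2/E_1$ to write $E_2 = (1-\alpha_1)(1-R_2)$; since both $|\alpha_1|$ and $|R_2|$ were already shown to be $O(n^{-1/2}\log^{1/2}n)$ with probability $1-O(n^{-1})$ (the explicit expansion of $R_2$ appears in Section 6 in the course of proving Lemma \ref{lem:unif_Ri}), the lemma follows in two lines. You instead compute $D_2$ directly from the recursion, identify its deterministic leading term as $4(1+\la^{-1/2})^2 n^2$, show that the product $|\rho_1^+||\rho_2^+|$ has the same leading term, and then bound the stochastic fluctuations using sub-gamma concentration of the three chi-squared variables. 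Your bookkeeping is sound: the cross terms are $O(n^{3/2}\log n) = o(n^2)$, so $E_2$ is bounded away from $0$ and $\infty$ with probability $1-o(1)$. The main advantages of your approach are that it is elementary and doesn't presuppose the expansion of $R_2$ derived elsewhere in the paper; the cost is more explicit asymptotic computation. A very minor nit: you wrote $\EE a_1^2 = m-n$ and $\EE a_2^2 = m-n+1$, whereas the correct values are $m-n+1$ and $m-n+2$; this $O(1)$ discrepancy is absorbed in the $o(1)$ terms and does not affect anything.
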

The above lemmas imply that the main contribution to $\log|E_n|$ comes from the first three sums $\sum_{i=3}^nL_i$, $\sum_{i =3}^nA_{0i}$, and $\sum_{i=3}^n B_{3i}$. We turn to the tasks of computing the second and third sums in this section, and study the first sum $\sum_{i=3}^nL_i$ in Section \ref{sec:A.1}.
\begin{definition}\label{defn:gi}
	Given integer $n$, define sequence $\{g_i\}_{i=3}^{n+1}$ by the recurrence
	\beqq
	g_{n+1}=1, \quad g_i=1+\w_ig_{i+1}.
	\eeqq 
	That is, $g_i = 1+\w_i+\w_i\w_{i+1}+\dots +\w_i\dots\w_n$ for $3\leq i \leq n$.
\end{definition}

\begin{lemma}\label{lem:sumA0i}
\beq
\sum_{i = 3}^nA_{0i} =\frac16\log n +O(\log\log n).
\eeq
\end{lemma}
\begin{proof}
	We prove the lemma by computing an upper and a lower bound for the sum $\sum_{i = 3}^nA_{0i}$. Observe that 
	\beq\label{eqn:sumA0i}
	\sum_{i=3}^nA_{0i}=\sum_{i=3}^ng_{i+1}(\gamma_i-\w_i)>	\sum_{i=n-n\nu_n^{-1}}^{n-n^{1/3}\sigma_n\nu_n}g_{i+1}(\g_i-\w_i),
	\eeq
	for any slowing increasing sequence $\nu_n$. For the purpose of this proof, it suffices to take $\nu_n=\log\log n$. 
	
	The indices $i$ in the sum on the right hand side of \eqref{eqn:sumA0i} satisfies $i< n-n^{1/3}$, so by Lemma \ref{lem:gi_bounds}\eqref{lem:gi_bounds_upper},  $g_{i+1}>\frac{1-\log^{-2}n}{1-\w_{i+1}}$ for sufficiently large $n$. We obtain
	\beq\label{eqn:sum_A0i}
	\sum_{i=3}^nA_{0i}>(1-\log^{-2}n)	\sum_{i=n-n\nu_n^{-1}}^{n-n^{1/3}\sigma_n\nu_n}\frac{\g_i-\w_i}{1-\w_{i+1}}.
	\eeq
	Recall that $\g_i-\w_i=\w_i\frac{|\rho_{i-1}^+|-|\rho_i^+|}{|\rho_i^+|}$. Since $n^{1/3}\sigma_n\ll n-i\ll n$, Lemma \ref{lem:wi}\eqref{lem:wi(3)} implies $\w_i=1+O\left(\sqrt{\frac{n-i}{n}}\right)$ and 
	\beq\label{eqn:1-wi_asymp}
	(1-\w_{i+1})^{-1}=\frac1{2(\la^{\frac12}+1)}\left(\frac{n-i}n\right)^{-\frac12}
	\left(1+O\left(\sqrt{\frac{n-i}n}\right)\right).
	\eeq
	We now study the factor $\frac{|\rho_{i-1}^+|-|\rho_i^+|}{|\rho_i^+|}$. From the second display of \eqref{eqn:diff_rho},
	\beqq
	|\rho_{i-1}^+|-|\rho_i^+|
	=1+\frac{2(\g m-1)}{\sqrt{U_{i-1}}+\sqrt{U_i}} = 1+\frac{2(\la^{-\frac12}+1)^2n\left(1+O\left(\sigma_n n^{-2/3}\right)\right)}{\sqrt{U_{i-1}}+\sqrt{U_i}},
	\eeqq
	where $U_i$ is defined in \eqref{defn:Ui}. By \eqref{eqn:Ui_asymp}, we get 
	\begin{equation}\label{eqn:sqrtUi}
		\begin{split}
			\sqrt{U_i}&=2(\la^{-\frac12}+1)n\left(\frac{n-i}{n}\right)^{\frac12}\left(1+O\left(n^{-\frac23}\sigma_n\left(\frac{n-i}{n}\right)^{-1}\right)\right),
		\end{split}
	\end{equation}
	noting that $\left(\frac{n-i}{n}\right)^{-1}\geq \nu_n$. Together with the asymptotics
	\beq\label{eqn:Vi}
	\g m-(m-n+2i-1)
	=2n\left(\la^{-1/2}+O\left(\frac{n-i}n\right)\right),
	\eeq
	it follows that 
	\beq\label{eqn:rho_asymp2}
	|\rho_i^+|=\frac12\left(\g m-(m-n+2i-1)+\sqrt{U_i}\right)=\la^{-\frac12}n\left(1+O\left(\sqrt{\frac{n-i}n}\right)\right).
	\eeq
	Thus, 
	\begin{align}
		\frac{|\rho_{i-1}^+|-|\rho_i^+|}{|\rho_i^+|}
		=\frac{1+\la^{1/2}}{2}\frac1n\left(\frac{n-i}{n}\right)^{-\frac12}
		\left(1+O\left(\sqrt{\frac{n-i}{n}}+n^{-\frac23}\sigma_n\left(\frac{n-i}{n}\right)^{-1}\right)\right).\label{eqn:rho_ratio}
	\end{align}
	Combine \eqref{eqn:1-wi_asymp} and \eqref{eqn:rho_ratio}, we get
	\beq\label{eqn:ratio}
	\frac{\g_i-\w_i}{1-\w_{i+1}}
	=\frac1{4n}\left(\frac{n-i}{n}\right)^{-1}\left(1+O\left(\sqrt{\frac{n-i}{n}}+n^{-\frac23}\sigma_n\left(\frac{n-i}{n}\right)^{-1}\right)\right).
	\eeq
	Therefore, by \eqref{eqn:sum_A0i},
	\beq
	\sum_{i=3}^nA_{0i}>\sum_{i=n-n\nu_n^{-1}}^{n-n^{1/3}\sigma_n\nu_n}\frac1{4n}\left(\frac{n-i}{n}\right)^{-1}\left(1+O\left(\sqrt{\frac{n-i}{n}}+n^{-\frac23}\sigma_n\left(\frac{n-i}{n}\right)^{-1}\right)\right).
	\eeq
	Since
	\beqq
	\begin{split}
	\sum_{i=n-n\nu_n^{-1}}^{n-n^{1/3}\sigma_n\nu_n}\frac1n\left(\frac{n-i}{n}\right)^{-1}&=\frac23\log n +O(\log\sigma_n)=\frac23\log n +O(\log\log n) ,\\
	\sum_{i=n-n\nu_n^{-1}}^{n-n^{1/3}\sigma_n\nu_n}\frac1n\left(\frac{n-i}{n}\right)^{-\frac12}&=O(\nu_n^{-1/2}),\quad  \text{and} \quad n^{-\frac23}\sigma_n\sum_{i=n-n\nu_n^{-1}}^{n-n^{1/3}\sigma_n\nu_n}\frac1n\left(\frac{n-i}{n}\right)^{-2}=O(\nu_n^{-1}),
	\end{split}
	\eeqq
	we obtain the lower bound $\sum_{i = 3}^nA_{0i}>\frac16\log n +O(\log\log n)$.
	
	It remains to show $\sum_{i = 3}^nA_{0i}<\frac16\log n +O(\log\log n)$. Since $\w_i$ and $\frac{|\rho_{i-1}^{+}|-|\rho_i^+|}{|\rho_i^+|}$ are both increasing in $i$, $\g_i-\w_i$ is also increasing in $i$. Thus, it follows from \eqref{def:A0i} that $A_{0i}<\frac{\g_i-\w_i}{1-\w_i}$ for every $i=3,4,\dots,n$ and so, $\sum_{i = 3}^nA_{0i}<\sum_{i = 3}^n\frac{\g_i-\w_i}{1-\w_i}$. 
	By Lemma \ref{lem:gi-wi} and Corollary \ref{cor:wi}, the following three statements hold:
	\begin{align}
		\sum_{i=3}^{n-n\nu_n^{-1}}\frac{\g_i-\w_i}{1-\w_i}&=O\left(\sum_{i = 3}^{n-n\nu_n^{-1}}\frac1n\left(\frac{n-i}n\right)^{-1}\right)=O(\log \nu_n),\\
		\sum_{i=n-n^{1/3}\sigma_n\nu_n}^{n-n^{1/3}\sigma_n}\frac{\g_i-\w_i}{1-\w_i}&=O\left(\sum_{i=n-n^{1/3}\sigma_n\nu_n}^{n-n^{1/3}\sigma_n}\frac1n\left(\frac{n-i}n\right)^{-1}\right)=O(\log \nu_n),\\
		\sum_{i=n-n^{1/3}\sigma_n}^{n}\frac{\g_i-\w_i}{1-\w_i}&=O\left(\sum_{i=n-n^{1/3}\sigma_n}^{n} \frac1n \left(n^{-\frac13}\sigma_n^{\frac12}\right)^{-1}\right)=O(1).
	\end{align}
	Thus, we obtain 
	\beq
	\sum_{i = 3}^nA_{0i}<\sum_{i=n-n\nu_n^{-1}}^{n-n^{1/3}\sigma_n\nu_n}\frac{\g_i-\w_i}{1-\w_i}+O(\log \nu_n),
	\eeq
	where the sum over $i$ on the right hand side is $\frac16\log n +O(\log\log n)$ by \eqref{eqn:ratio}. This completes the proof of the lemma.
\end{proof}

We now study contribution from the sum $\sum_{i=3}^n B_{3i}$. The following lemma states that $\sum_{i=3}^n B_{3i}$ is close to $\sum_{i=3}^n B^*_{3i}$, where
\beqq
B^*_{3i} = (\w_{i} L^2_{i-1})+\w_i(\w_{i-1}L^2_{i-2})+\dots+\w_i\dots\w_4(\w_{3}L^2_{2}).
\eeqq
\begin{lemma}\label{lem:B3i_star_hat}
	With probability $1 - o(1)$, $\sum_{i = 3}^nB_{3i}-B^*_{3i} = O(1)$.
\end{lemma}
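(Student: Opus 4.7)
By interchanging the order of summation, one rewrites
\[
\sum_{i=3}^n \bigl(B_{3i} - B^*_{3i}\bigr) = \sum_{k=2}^{n-1} \tilde g_{k+1}\bigl(R_k^2 - L_k^2\bigr),
\]
where $\tilde g_{k+1}:=\sum_{i=k+1}^n \omega_{k+1}\omega_{k+2}\cdots\omega_i \leq g_{k+1}$ with $g_{k+1}$ as in Definition \ref{defn:gi}, and the boundary is absorbed by the convention $L_2:=R_2$. Hence it suffices to show this weighted sum is $O(1)$ with probability $1-o(1)$.

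To do so, factor $R_k^2 - L_k^2 = (R_k - L_k)(R_k + L_k)$ and apply the uniform bound $\max_k |R_k| = o(n^{-1/3})$ from Lemma \ref{lem:unif_Ri} together with its counterpart $\max_k |L_k| = o(n^{-1/3})$ (stated in the outline after \eqref{eqn:logEn} and obtainable from sub-gamma concentration via Lemma \ref{lem:SG_boucheron} applied to the $X_i$'s making up $L_k$, followed by a union bound). With probability $1-o(1)$ this yields $|R_k + L_k| = o(n^{-1/3})$ uniformly, so the task reduces to showing
\[
\sum_{k=2}^{n-1} \tilde g_{k+1}\,|R_k - L_k| = O(n^{1/3})
\]
with high probability.

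For this, I would substitute the decomposition \eqref{eqn:Ri_AB},
\[
R_k - L_k = \omega_k\cdots\omega_3 R_2 - A_{0k} + B_{0k} + B_{1k} + B_{2k} + B_{3k},
\]
and treat each piece separately. The deterministic contribution $A_{0k}$ is bounded pointwise via the sharpened estimate $\gamma_j - \omega_j \leq C/(n(1-\omega_j))$ of Lemma \ref{lem:gi-wi}, combined with the two-regime scaling of $g_{k+1}$ and $1-\omega_j$ from Corollary \ref{cor:wi}. The stochastic pieces $B_{0k}, B_{1k}, B_{2k}$ and the boundary term $\omega_k\cdots\omega_3 R_2$ are handled by sub-gamma concentration in the same spirit as Lemma \ref{lem:AB_bound}, but applied in absolute value rather than to a signed sum. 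The self-referential piece $B_{3k}$ is absorbed using the pointwise estimate $|B_{3k}| \leq g_{k+1}\max_j R_{j-1}^2 = o(n^{-2/3})\,g_{k+1}$ inherited from Lemma \ref{lem:unif_Ri}.

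\textbf{Main obstacle.} The delicate point is the edge regime $k \to n$, where $1-\omega_k \sim n^{-1/3}\sigma_n^{1/2}$ forces $\tilde g_{k+1}$ to be as large as $n^{1/3}\sigma_n^{-1/2}$, while away from the edge Corollary \ref{cor:wi} gives $\tilde g_{k+1}\sim\sqrt{n/(n-k)}$. A crude $\ell^1$ bound that ignores this regime split would overshoot the required $O(n^{1/3})$, so one must exploit the $i$-dependent refinement of Lemma \ref{lem:gi-wi}, the two-regime asymptotics of Corollary \ref{cor:wi}, and the $o(n^{-1/3})$ smallness of $R_k + L_k$ simultaneously. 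In particular, the proof must be arranged so that the large values of $\tilde g_{k+1}$ near the edge are always paired with the correspondingly sharper bounds on the components of $R_k-L_k$ given by the edge asymptotics, rather than with the weaker bulk bounds.
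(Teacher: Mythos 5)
Your interchange of summation and the identity $\sum_{i=3}^n (B_{3i}-B^*_{3i}) = \sum_{k=2}^{n-1}(g_{k+1}-1)(R_k^2-L_k^2)$ are correct, and factoring $R_k^2-L_k^2 = (R_k-L_k)(R_k+L_k)$ mirrors the H\"older step the paper uses. The treatment of $A_{0k}$, $B_{0k}$, $B_{1k}$, $B_{2k}$ and the boundary term via the index-dependent bound of Lemma~\ref{lem:gi-wi} and the $L^1$-norm strategy of Lemma~\ref{lem:AB_bound} would indeed deliver $\sum_k (g_{k+1}-1)|R_k-L_k| = O(n^{1/3}\sigma_n^{-1/2})$ for those pieces, which after multiplying by the uniform $o(n^{-1/3})$ bound on $|R_k+L_k|$ gives $o(1)$.

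The gap is in the $B_{3k}$ piece. Your proposed pointwise estimate $|B_{3k}| \lesssim (1-\w_k)^{-1}\max_j R_j^2$, combined with the uniform bound $\max_j R_j^2 = o(n^{-2/3})$ from Lemma~\ref{lem:unif_Ri}, is too crude to close the argument. Carrying it through one gets
\[
\sum_k (g_{k+1}-1)\,|B_{3k}|\,|R_k+L_k| \;\lesssim\; o(n^{-1/3})\cdot o(n^{-2/3})\sum_{k}\frac{1}{(1-\w_k)^2} \;=\; o(n^{-1})\cdot O(n\log n) \;=\; o(\log n),
\]
which diverges. Even with the sharper constants implicit in the proof of Lemma~\ref{lem:unif_Ri} (namely $\max_j|R_j| = O(n^{-1/3}(\log\log n)^{-1/2})$), the bound is $O(\log n/(\log\log n)^{3/2})$, still unbounded. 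The problem is that the worst-case pointwise maximum of $R_j^2$ over all $j$ is larger by a logarithmic factor than the typical size of $R_j^2$ at a fixed $j$. The paper avoids this loss by working with the truncated process $\hR_i$ (so that $\hR_{j-1}^2$ is controlled in $L^p$ a priori) and bounding $\|\hR_{j}\|_4 = O((n(1-\w_j))^{-1/2})$, which is \emph{index-dependent} and for $j$ near the edge is genuinely smaller than the global maximum. This yields $\|\hB_{3k}\|_2 = O(1/(n(1-\w_k)^2))$, matching the order of the other pieces, and the total sum comes out $O(\sigma_n^{-3/4})$. To repair your argument you would need either to adopt the paper's $L^p$-norm bookkeeping for $B_{3k}$ (in which case the truncation device is also needed), or to establish an index-dependent high-probability pointwise bound on $|R_j|$ of the same strength as the $L^4$ bound, which the paper does not provide and which does not obviously follow from Lemma~\ref{lem:unif_Ri}.
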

The new sum is much simpler, and we turn now to the task of computing it.
We begin by observing that $\sum_{i=3}^n B_{3i}*$ can be rewritten as
\beq\begin{split}\label{eq:B3i_decomposition}
&\sum_{i=3}^n B_{3i}^* = \sum_{i=4}^n (g_i-1)L_{i-1}^2\\
&=\sum_{i=4}^n (g_i-1)Y_{i-1}^2+\sum_{i=4}^n (g_i-1)\left[2Y_{i-1}(\alpha_{i-1}-\w_3\cdots\w_{i-1}\a_2)+(\alpha_{i-1}-\w_3\cdots\w_{i-1}\a_2)^2\right].
\end{split}\eeq
The dominant contribution comes from the first sum while the second sum is bounded of constant order.  We state this more precisely in the following two lemmas.

\begin{lemma}\label{lem:B3i_smallstuff}
For the second sum in \eqref{eq:B3i_decomposition}, with probability $1-o(1)$, we have the bound  
\beq
\sum_{i=4}^n (g_i-1)\left[2Y_{i-1}(\alpha_{i-1}-\w_3\cdots\w_{i-1}\a_2)+(\alpha_{i-1}-\w_3\cdots\w_{i-1}\a_2)^2\right]
=O(1).
\eeq
\end{lemma}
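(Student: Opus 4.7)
The plan is to split the sum into
\begin{equation*}
S_2 = \sum_{i=4}^n (g_i-1)\bigl(\alpha_{i-1}-\omega_3\cdots\omega_{i-1}\alpha_2\bigr)^2,\qquad S_1 = 2\sum_{i=4}^n (g_i-1)Y_{i-1}\bigl(\alpha_{i-1}-\omega_3\cdots\omega_{i-1}\alpha_2\bigr),
\end{equation*}
and show that $\EE S_2 = O(1)$ and $\mathrm{Var}(S_1) = o(1)$, which gives the claim via Markov and Chebyshev.  For $S_2$, the variables $\alpha_{i-1}$ and $\alpha_2$ are independent for $i\geq 4$ and have mean zero, so by Lemma \ref{lem:SG} combined with $|\rho_j^+| = \Theta(n)$ from Lemma \ref{lem:rho_properties},
$$\EE\bigl(\alpha_{i-1} - \omega_3\cdots\omega_{i-1}\alpha_2\bigr)^2 = \EE\alpha_{i-1}^2 + (\omega_3\cdots\omega_{i-1})^2\EE\alpha_2^2 = O(n^{-1}).$$
Hence $\EE S_2 \leq (C/n)\sum_{i=4}^n(g_i-1)$, which is $O(1)$ by the same three-regime estimate used to prove Lemma \ref{lem:sumA0i}.

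For $S_1$, decompose $S_1 = 2S_1' - 2\alpha_2 T$ with
$$S_1' = \sum_{i=4}^n (g_i-1)Y_{i-1}\alpha_{i-1}, \qquad T = \sum_{i=4}^n (g_i-1)\omega_3\cdots\omega_{i-1}Y_{i-1}.$$
The key independence observation is that $Y_{i-1}$ is measurable with respect to $\{d_k, c_k : k \leq i-2\}$ while $\alpha_{i-1}$ is a function of $d_{i-1}$ alone, so $\alpha_{i-1}$ is independent of $Y_{i-1}$; the same observation for $j>i$ shows all cross terms in $\EE(S_1')^2$ vanish, leaving
$$\mathrm{Var}(S_1') = \sum_{i=4}^n (g_i-1)^2\,\EE\alpha_{i-1}^2\,\EE Y_{i-1}^2.$$
The variance lemma at the end of Section \ref{sec:setup} gives $\EE X_k^2 = O(n^{-1})$, so $\EE Y_{i-1}^2 \leq (C/n)\sum_{k=3}^{i-1}(\omega_{k+1}\cdots\omega_{i-1})^2$; monotonicity of $\omega_j$ in $j$ bounds the inner product by $\omega_{i-1}^{2(i-1-k)}$, and geometric summation yields $\EE Y_{i-1}^2 = O(1/(n(1-\omega_{i-1})))$.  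A direct check in each of the regimes of Lemma \ref{lem:wi} using Corollary \ref{cor:wi} then gives $\mathrm{Var}(S_1') = o(1)$.  For the remainder, a direct computation from \eqref{eq:rho_def} shows $\omega_3 = O(1/n)$, so the product $\omega_3\cdots\omega_{i-1}$ is uniformly $O(1/n)$; combined with $|\alpha_2| = O(n^{-1/2})$ in probability from Lemma \ref{lem:SG_boucheron} and a first-moment bound on $T$, we obtain $\alpha_2 T = o_{\PP}(1)$.

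The main obstacle is the variance estimate for $S_1'$: the factor $(g_i-1)^2$ grows like $n/(n-i)$ as $i\to n$, while the weights $(\omega_{k+1}\cdots\omega_{i-1})^2$ in $\EE Y_{i-1}^2$ approach $1$ in the edge regime.  These competing divergences are reconciled via the sharp estimate $1-\omega_k \asymp \max(n^{-1/3}\sigma_n^{1/2}, ((n-k)/n)^{1/2})$ from Corollary \ref{cor:wi}, applied separately in each regime; the dominant contribution comes from the intermediate regime, where the summand behaves like $n^{-1/2}(n-i)^{-3/2}$, summing to $O(n^{-2/3}\sigma_n^{-1/2}) = o(1)$, while the edge regime contributes $n^{-1}\sigma_n^{-3/2}\cdot n^{1/3}\sigma_n = o(1)$ by multiplying the per-term bound by the regime width.
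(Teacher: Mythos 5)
Your proof is correct, and it takes a genuinely different route from the paper's. Both arguments use the same three-way split (you call the pieces $S_2$, $S_1'$, and $\alpha_2 T$; the paper calls them $P_3$, $P_1/2$, and $P_2/2$), but the machinery is quite different. The paper writes $P_1$ and $P_3$ as quadratic forms in vectors of independent sub-gamma variables and invokes the Hanson--Wright-type concentration bound of Lemma~\ref{lem:HansonWright}, which requires estimating Hilbert--Schmidt norms of fairly elaborate triangular matrices ($\tilde Z$, $GWD$, $Q$). You instead work purely at the level of first and second moments. For $S_2$ you compute $\EE S_2 = O(1)$ and apply Markov, exactly as the paper itself does in proving Lemma~\ref{lem:Ri2_bound}. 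For $S_1'$ you make the structural observation --- which the paper does not use --- that $Y_{i-1}$ is a function of $\{d_k,c_k:k\le i-2\}$ while $\alpha_{i-1}$ depends only on $d_{i-1}$, so that $\EE S_1' = 0$ and every cross term in $\EE (S_1')^2$ vanishes; this collapses $\mathrm{Var}(S_1')$ to a single diagonal sum $\sum (g_i-1)^2\,\EE\alpha_{i-1}^2\,\EE Y_{i-1}^2$, which you bound via the geometric-series estimate $\EE Y_{i-1}^2 = O(1/(n(1-\omega_{i-1})))$ and Corollary~\ref{cor:wi}. This is a cleaner and more elementary argument that avoids the quadratic-form set-up and the Hanson--Wright inequality entirely, at the cost of giving only polynomial tail decay (Chebyshev/Markov) rather than the nearly exponential decay Hanson--Wright provides --- but since only $1-o(1)$ probability is needed here, that cost is irrelevant. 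All the constituent estimates check out: $\omega_3 = O(1/n)$, $\EE\alpha_i^2 = O(1/n)$, $\EE Y_{i-1}^2 = O(1/(n(1-\omega_{i-1})))$, and the two-regime computation of the variance sum giving $O(n^{-2/3}\sigma_n^{-1/2}) = o(1)$ (one small imprecision: you call the intermediate regime ``dominant,'' but both regimes actually contribute the same order $n^{-2/3}\sigma_n^{-1/2}$; this is cosmetic and does not affect the conclusion).
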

\begin{lemma}\label{lem:B3i_bigstuff} With probability $1-o(1)$,
\beq
\sum_{i=4}^n (g_i-1)Y_{i-1}^2 = \frac{\a}6 \log n+O(\log\log n).
\eeq
\end{lemma}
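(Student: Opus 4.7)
The plan is a second-moment method: compute $\mathbb{E}[S]$ where $S := \sum_{i=4}^n (g_i-1)Y_{i-1}^2$, and show $\mathrm{Var}(S) = o((\log n)^2)$ so that Chebyshev delivers the conclusion. Since $Y_i = \omega_i Y_{i-1} + X_i$ with $X_i$ mean-zero and independent of $X_3,\ldots,X_{i-1}$ (their underlying $d,c$-variables use disjoint indices), writing $W_{j,k} := \omega_{j+1}\cdots\omega_k$ for $j<k$ and $W_{j,j}:=1$ gives
\begin{equation*}
\mathbb{E}[Y_{i-1}^2] = \sum_{j=3}^{i-1} W_{j,i-1}^2\, \mathbb{E}[X_j^2],
\end{equation*}
so $\mathbb{E}[S]$ reduces to a deterministic double sum in $\omega_i, g_i, \tau_i, \delta_i, |\rho_i^\pm|$, all controlled by Lemmas \ref{lem:rho_properties} and \ref{lem:wi}, Corollary \ref{cor:wi}, and the formula for $\mathbb{E}[X_i^2]$ at the end of Section \ref{sec:setup}.

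For the mean, I would perform a regime decomposition in the spirit of Lemma \ref{lem:sumA0i}. Setting $\nu_n := \log\log n$, split the outer index into the deep bulk $i < n-n/\nu_n$, the middle range $n^{1/3}\sigma_n\nu_n \leq n-i \leq n/\nu_n$, and the near-edge $n-i < n^{1/3}\sigma_n\nu_n$. Crude substitution of the asymptotics in Corollary \ref{cor:wi} makes the deep bulk and near-edge each $O(1)$. In the middle range, Lemma \ref{lem:wi}(iii) gives $1-\omega_i \sim 2(1+\lambda^{1/2})\sqrt{(n-i)/n}$, the relaxation length $(1-\omega_i)^{-1}$ is much shorter than $n-i$, and the stationary approximation yields $\mathbb{E}[Y_{i-1}^2] \sim \mathbb{E}[X_i^2]/(2(1-\omega_i))$. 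Combining with $g_i - 1 \sim (1-\omega_i)^{-1}$ and the local limit $\mathbb{E}[X_i^2] \sim 2\alpha(1+\lambda^{1/2})^2/n$ produces $(g_i-1)\mathbb{E}[Y_{i-1}^2] \sim \alpha/(4(n-i))$. The Riemann sum $\sum 1/(n-i)$ over the middle range equals $\tfrac{2}{3}\log n(1+o(1))$ (since $\log \sigma_n$ and $\log\nu_n$ are both $o(\log n)$), giving $\mathbb{E}[S] = \tfrac{\alpha}{6}\log n(1+o(1))$.

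For concentration, expanding $Y_{i-1}^2 = \sum_{j,k} W_{j,i-1}W_{k,i-1} X_j X_k$ and swapping the order of summation, $S$ becomes the quadratic form
\begin{equation*}
S = \sum_{j=3}^{n-1} b_j X_j^2 + 2 \sum_{3\leq j < k \leq n-1} W_{j,k}\, b_k\, X_j X_k, \qquad b_k := \sum_{i=k+1}^n (g_i-1) W_{k,i-1}^2,
\end{equation*}
in the independent $X_j$. Independence then yields the exact variance formula
\begin{equation*}
\mathrm{Var}(S) = \sum_j b_j^2\, \mathrm{Var}(X_j^2) + 4 \sum_{j<k} W_{j,k}^2\, b_k^2\, \mathbb{E}[X_j^2]\,\mathbb{E}[X_k^2].
\end{equation*}
The sub-gamma bounds of Lemmas \ref{lem:SG} and \ref{lem:SG_boucheron} give $\mathrm{Var}(X_j^2), \mathbb{E}[X_j^4] = O(n^{-2})$, and a regime analysis of $b_k$ itself (structurally similar to that of $\mathbb{E}[S]$) shows $b_k = O(n/(n-k))$ in the middle range and $b_k = O(n^{2/3}\sigma_n^{-1})$ at the edge. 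Using $\sum_{j<k} W_{j,k}^2\, \mathbb{E}[X_j^2] \leq \mathbb{E}[Y_{k-1}^2]$ and plugging in these asymptotics regime-by-regime, both the diagonal and off-diagonal variance contributions are summable to $O(1)$, so $\mathrm{Var}(S) = O(1)$ and Chebyshev gives $|S - \mathbb{E}[S]| = o(\log n)$ with probability $1-o(1)$.

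The main obstacle I anticipate is the concentration bookkeeping: a naive bound near the edge yields $b_k^2 = O(n^{4/3}\sigma_n^{-2})$, seemingly too large to control a logarithmic-order quantity. The saving feature is that the corresponding $\mathbb{E}[Y_{k-1}^2]$ is proportionally small---the short relaxation length that inflates $b_k$ also shrinks $Y$'s stationary variance---so the products $b_k^2\,\mathbb{E}[X_k^2]\,\mathbb{E}[Y_{k-1}^2]$ are summable. Verifying this cancellation regime-by-regime, in parallel with the computation of $\mathbb{E}[S]$, is the most delicate step of the proof.
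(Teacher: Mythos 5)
Your proposal reaches the same conclusion with a genuinely different concentration argument from the paper. Both compute $\EE[S]$ by the same regime decomposition (your "stationary approximation" $\EE[Y_{i-1}^2]\sim\EE[X_i^2]/(2(1-\omega_i))$ is exactly what the paper justifies by evaluating the double sum via the incomplete Gamma asymptotics), and the constant $\frac{\alpha}{4}\cdot\frac{2}{3}=\frac{\alpha}{6}$ comes out identically. Where you differ is concentration: the paper writes $S=\bfX^T W^T G W\bfX$ and invokes the Hanson--Wright inequality for sub-gamma vectors (Lemma~\ref{lem:HansonWright}), bounding $\|W\|\,\|GW\|_{\mathrm{HS}}=O(n^{1/3}\sigma_n^{-1/2}\cdot n^{2/3}\sigma_n^{-1/4})$ to get an $O(\sigma_n^{-1/4})$ deviation with exponential tail. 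You instead compute $\mathrm{Var}(S)$ directly from the diagonal/cross decomposition of the quadratic form in the independent $X_j$ and apply Chebyshev. Your variance formula is correct, and the regime-by-regime estimates you anticipate do close: for $n-k\gg n^{1/3}\sigma_n$, $b_k=O(n/(n-k))$, $\mathrm{Var}(X_j^2)=O(n^{-2})$, and $\EE[Y_{k-1}^2]=O(n^{-1}\sqrt{n/(n-k)})$, so the diagonal sum is $O(n^{-1/3}\sigma_n^{-1})$ and the cross sum is $O(\sigma_n^{-3/2})$; the edge regime gives the same orders. Thus $\mathrm{Var}(S)=o(1)$, even stronger than the $o((\log n)^2)$ you need. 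What the paper's Hanson--Wright route buys is an exponential tail bound (probability $1-e^{-\sigma_n^{1/2}}$) and reuse of the same machinery in Lemma~\ref{lem:B3i_smallstuff}; what your second-moment route buys is elementarity and avoiding the operator-norm/HS-norm bookkeeping. The one step you should flesh out is the claim $b_k=O(n/(n-k))$: since $g_i-1\sim(1-\omega_i)^{-1}$ grows as $i\to n$ while $W_{k,i-1}^2$ decays geometrically with local rate $1-\omega_k$, you need the relaxation length $(1-\omega_k)^{-1}\sim\sqrt{n/(n-k)}$ to be $\ll n-k$, which holds precisely in the middle regime; this is the same point the paper handles via the Gamma-function integral.
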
 
From \eqref{eqn:logEn} and \eqref{eq:B3i_decomposition}, the Lemmas \ref{lem:Ri2_bound}-\ref{lem:B3i_bigstuff} together yield that
\beq
\log|E_n|=-\sum_{i = 3}^n L_i+\frac{1-\a}{6}\log n+ O(\log\log n).
\eeq
We postpone the proofs of Lemma \ref{lem:B3i_star_hat} and Lemma \ref{lem:B3i_smallstuff} to the \hyperref[sec:technical_lemmas]{Appendix} and turn now to proving Lemma \ref{lem:B3i_bigstuff}. 

In the proofs of Lemmas \ref{lem:B3i_smallstuff} and \ref{lem:B3i_bigstuff}, we will need the following lemma, which is a Hanson Wright type inequality (see, for example Proposition 1.1 from G{\"o}tze, Sambale, and Sinulis \cite{GSS_HansonWright}).  We employ this lemma in a similar manner to the way that Johnstone et al handle such quadratic forms in their paper \cite{JKOP1}. 

\begin{lemma}\label{lem:HansonWright}
Let $\mathbf{x}=(x_1,...,x_n)^T$ be a vector with independent subgamma entries satisfying $x_i\in SG(v,u)$ where $v,u\leq Cn^{-1}$ for some $C>0$. Then, for any symmetric matrix $A$,
\beq
|\mathbf{x}^TA\mathbf{x}-\EE\mathbf{x}^TA\mathbf{x}|=O(\nu_n n^{-1}\|A\|_{\HS})\text{ with probability at least }1-\nu_n^{-1},
\eeq
for any $\nu_n>0$ (For the purposes of this paper we take $\nu_n$ to be a slowly growing function such as $\log\log n$).
\end{lemma}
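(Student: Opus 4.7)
The plan is to derive this result as a direct specialization of Proposition 1.1 in \cite{GSS_HansonWright}, which is the sub-gamma analog of the classical Hanson-Wright inequality. That proposition yields a two-regime tail bound of the schematic form
\beqq
\PP\left(|\mathbf{x}^T A \mathbf{x} - \EE \mathbf{x}^T A \mathbf{x}| \geq t\right) \leq 2 \exp\left(-c \min\left(\frac{t^2}{v^2 \|A\|_{HS}^2},\; \frac{t}{u \|A\|_{op}}\right)\right)
\eeqq
where $c>0$ is an absolute constant, the Gaussian (small-$t$) regime is controlled by the Hilbert-Schmidt norm, and the exponential-tail (large-$t$) regime is controlled by the operator norm. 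The proof of Proposition 1.1 in \cite{GSS_HansonWright} proceeds via Hoeffding-type decoupling of the off-diagonal quadratic form combined with moment generating function bounds for sub-gamma variables; we use it here entirely as a black box.

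Since the hypothesis of the lemma offers no quantitative control on $\|A\|_{op}$, the first step is the crude but harmless replacement $\|A\|_{op}\leq \|A\|_{HS}$, which collapses the exponential-tail regime onto the Hilbert-Schmidt norm. Next, substituting the parameter bounds $v,u\leq Cn^{-1}$ into the exponent simplifies the tail estimate to
\beqq
\PP\left(|\mathbf{x}^T A \mathbf{x} - \EE \mathbf{x}^T A \mathbf{x}| \geq t\right) \leq 2 \exp\left(-c'\min\left(\frac{t^2 n^2}{\|A\|_{HS}^2},\; \frac{t n}{\|A\|_{HS}}\right)\right),
\eeqq
with $c'>0$ depending only on $c$ and $C$.

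The final step is to set $t = K\nu_n n^{-1}\|A\|_{HS}$ for a constant $K$ to be determined. The two arguments inside the $\min$ then become $c'K^2\nu_n^2$ and $c'K\nu_n$; for $\nu_n\geq 1$ the smaller of these is $c'K\nu_n$, and choosing $K$ large enough that $c'K \geq 1+\log 2$ makes the right-hand side at most $\exp(-\nu_n)$, as required. The case $\nu_n<1$ is trivial, since any $O(1)$ deterministic bound holds on a set of probability $1 > 1-\exp(-\nu_n)$. No substantive obstacle arises in this argument; it is a routine specialization of the sub-gamma Hanson-Wright inequality, and the only bookkeeping is tracking constants through the two substitutions above.
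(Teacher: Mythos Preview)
Your proposal is correct and follows essentially the same approach as the paper, which simply remarks that the lemma follows directly from Proposition~1.1 of \cite{GSS_HansonWright} once one observes that the sub-gamma hypotheses $v,u\leq Cn^{-1}$ force the scale parameter (called $M$ in that proposition) to be proportional to $n^{-1/2}$. Your writeup supplies more detail --- the crude bound $\|A\|_{op}\leq\|A\|_{HS}$ and the explicit choice $t=K\nu_n n^{-1}\|A\|_{HS}$ --- but the substance is identical.
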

\begin{proof}
It follows from Definition \ref{def:SG} that, there is a constant $c>0$ such that $\|x_i\|_{\Psi_1}\leq cn^{-\frac12}$ for all $i=1,\dots, n$. Here, 
\[
\|X\|_{\Psi_1}:=\inf\left\{t>0: \bE\exp(|X|/t)\leq 2\right\}
\]
denotes the (exponential) Orlicz norm. Thus, by Proposition of 1.1 of \cite{GSS_HansonWright}, for some constants $C_1,C_2>0$, 
\beq\label{eqn:quadform}
\begin{split}
\bE|\mathbf{x}^TA\mathbf{x}-\EE\mathbf{x}^TA\mathbf{x}|
&\leq\int_0^\infty2\exp\left(-\frac1{C_1}\min\left\{\frac{n^2t^2}{\|A\|^2_{\HS}},\left(\frac{nt}{\|A\|_{\text{op}}}\right)^{1/2}\right\}\right)dt\\
&=\frac{2\|A\|_{\HS}}{n}\int_0^\infty \exp\left(-\frac1{C_1}\min\left\{u^2,\left(u\frac{\|A\|_{\HS}}{\|A\|_{\text{op}}}\right)^{1/2}\right\}\right)du\\
&\leq \frac{C_2\|A\|_{\HS}}{n}.
\end{split}
\eeq
Applying Markov's inequality to $\bP\left(|\mathbf{x}^TA\mathbf{x}-\EE\mathbf{x}^TA\mathbf{x}|>C_3\nu_n n^{-1}\|A\|_{\HS}\right)$ and using \eqref{eqn:quadform}, we obtain the lemma with appropriate constant $C_3>0$ depending on $C_1$ and $C_2$.
\end{proof}

\subsection{Proof of Lemma \ref{lem:B3i_bigstuff}}
We begin by showing that $\sum(g_i-1)Y_{i-1}^2$ is close to its expectation with probability approaching one, then proceed to compute the leading order term of the expectation.
\begin{definition}\label{def:HW_notations} We define the following notations to be used in this proof and also in the \hyperref[sec:technical_lemmas]{Appendix}:
\beq\begin{split}
		W &= \begin{pmatrix}
			1 & & & & \\
			\omega_4 & 1& & & \\
			\omega_4 \omega_5 & \omega_5 & 1 & & &\\
			\vdots & \vdots & \vdots & \ddots & &\\
			\omega_4 \dots \omega_{n-2} & \omega_5 \dots \omega_{n-2} & \dots & \omega_{n-2} & 1& \\
			\omega_4 \dots \omega_{n-1} & \omega_5 \dots \omega_{n-1} & \dots & \omega_{n-2} \omega_{n-1} & \omega_{n-1} &1 \\
		\end{pmatrix}, \\
		G  &= \text{diag} (g_4 -1, \dots, g_{n-1}-1, g_n-1),\\
		D  &=  \text{diag} (1+\tau_2,1+\tau_3, \dots, 1+\tau_{n-2}),\\
		\bfY &= (Y_3, Y_4 \dots, Y_{n-1})^T, \\
		 \bfX& = (X_3, X_4 \dots, X_{n-1})^T.\\
	\end{split}\eeq
	\end{definition}
Observe that $Y = W X$ by Definition \ref{defn:Yi} and we can write
\beq
\sum_{i=4}^n (g_i-1)Y_{i-1}^2 = \bfY^T G\bfY=\bfX^T W^T GW\bfX.
\eeq
Note that $\bfX$ is a vector of independent sub-gamma random variables satisfying the conditions of Lemma \ref{lem:HansonWright} and $W^TGW$ is a symmetric, deterministic matrix.  Thus, by the lemma, we conclude that, with probability $1-O(\sigma_n^{-1/2})$,
\beq
|\bfX^T W^T GW\bfX-\EE\bfX^T W^T GW\bfX|=O\left(\sigma_n^{1/2} n^{-1}\|W^T GW\|_{\HS}\right)
=O\left(\sigma_n^{1/2} n^{-1}\|W\| \|GW\|_{\HS}\right).
\eeq
To bound $\|W\|$, we break $W$ up as a sum of $n$ matrices, each containing one of the subdiagonals of the matrix $W$.  The first such matrix contains the elements $1,1,...,1$, the second contains $\w_4, \w_5,...,\w_{n-1}$, and so forth.  The norm of each of these matrices is equal to its largest element, so, using Lemma \ref{lem:wi}, we get
\beq
		\|W\| \leq 1 + \omega_{n-1} + \dots + \omega_{n-1}\dots \omega_4 \leq \frac{1}{1 - \omega_{n-1}} = O(n^{1/3}\sigma_n^{-1/2}).
\eeq
To bound $\|GW\|_{\HS}$, we use Lemma \ref{lem:gi_bounds} and Corollary \ref{cor:wi}, and conclude that
	\beq\begin{split}
		\|G W\|_{\HS} &= \left(\sum_{i = 4}^{n}(g_i-1)^2(1 + \omega_{i-1}^2 + \dots + \omega_{i-1}^2\dots \omega_4^2 ) \right)^{1/2} \\
		&\leq \left(\sum_{i = 4}^{n}\frac{(g_i-1)^2}{1 - \omega_{i-1}^2} \right)^{1/2} = O \left(\left(\sum_{i = 4}^{n} (1 - \omega_{i-1})^{-3} \right)^{1/2} \right) =O(n^{2/3} \sigma_n^{-1/4}).
	\end{split}\eeq
	Thus  with probability $1-O(\sigma_n^{-1/2})$,
	\begin{equation}
		\left|\bfX^T W^T GW\bfX-\EE\bfX^T W^T GW\bfX\right|= O(\sigma_n^{-1/4}) = o(1).
	\end{equation}
We now turn to the task of computing the leading term of 
\beq
\begin{split}
\bE \bfX^TW^T GW \bfX 
&= \sum_{i = 1}^{n-3} (W^T GW)_{ii} \bE X_{i+2}^2
= \sum_{i = 1}^{n-3} \sum_{j = i}^{n-3} (\omega_{i+3} \dots \omega_{j+2})^2\omega_{j+3}g_{j+4} \bE X_{i+2}^2.			
\end{split}
\eeq
It will be convenient to switch the order of summation and rewrite this as 
\beq\label{eq:A4sum_indexreverse}\begin{split}
\sum_{i = 1}^{n-3} \sum_{j = i}^{n-3} (\omega_{i+3} \dots \omega_{j+2})^2\omega_{j+3}g_{j+4} \bE X_{i+2}^2
&=\sum_{j = 1}^{n-3} \left(\sum_{i = 1}^{j} (\omega_{i+3} \dots \omega_{j+2})^2 \bE X_{i+2}^2\right)\left(\sum_{k=j}^{n-3}\w_{j+3}\cdots\w_{k+3}\right).
\end{split}\eeq
It turns out that the dominant contribution comes from the portion of the sum where the indices are restricted to $n-n(\log n)^{-2}<i\leq j<n-n^{1/3}\sigma_n\log n $.  We will begin by computing the sum on those indices and then show that the sum of the remaining terms is small.  Thus, our first task is to compute
\beq\label{eq:A4sum_brokenup}\begin{split}
\sum_{j=n-n(\log n)^{-2}}^{n-n^{1/3}\sigma_n\log n}
\left(\sum_{i=n-n(\log n)^{-2}}^j(\omega_{i+3}\cdots\omega_{j+2})^2
\EE X_{i+2}^2\right)
\left(\sum_{k=j}^{n-3}\omega_{j+3}\cdots\omega_{k+3}\right).
\end{split}\eeq
For the purposes of calculating this, we begin by computing the asymptotics of a product of the form 
\[
\prod_{i=i_1}^{i_2}\omega_i=\exp\left(\sum_{i=i_1}^{i_2}\log\w_i\right),
\]
where $n^{1/3}\sigma_n\ll n-i \ll n$ for indices $i$ in the range $i_1\leq i\leq i_2$. 
On this range of indices, it follows from the proof of Lemma \ref{lem:wi}\eqref{lem:wi(3)} that $\w_{i}$ has a series expansion in powers of $(\frac{n-i}{n})^{1/2}$ and $n^{-1/3}\sigma_n^{1/2}$.  Furthermore, using Taylor expansion of log, we can write
\beqq
\log\w_i
=-2(1+\lambda^{1/2})\left(\frac{n-i}{n}\right)^{1/2}\left(1+s\left(\frac{n-i}{n}\right)\right),
\eeqq
where $s(x)$ is a series in positive powers of $x^{1/2}$ and $n^{-1/3}\sigma_n^{1/2}$ (including mixed terms) and the series converges when $x=\frac{n-i}{n}$ for $i$ in the range of indices described above. 

Using this representation, we have
\beq\label{eq:wiproductcompute}\begin{split}
\prod_{i=i_1}^{i_2}\omega_i
&=\exp\left(\sum_{i=i_1}^{i_2}\left(-2(1+\lambda^{\frac12})\left(\frac{n-i}{n}\right)^{\frac12}\left(1+s(\tfrac{n-i}{n})\right)\right)\right)\\
&= \exp\left(-2(1+\lambda^{\frac12})n\int_{i_1/n}^{i_2/n}(1-x)^{\frac12}\left(1+s(1-x)\right)dx(1+O(n^{-1})) \right)\\
&= \exp\left(-\frac43(1+\lambda^{\frac12})n\left[\left(\frac{n-i_1}{n}\right)^{\frac32}(1+\tilde{s}(\tfrac{n-i_1}{n}))-\left(\frac{n-i_2}{n}\right)^{\frac32}(1+\tilde{s}(\tfrac{n-i_2}{n}))\right]  \right)\left(1+O\left((\tfrac{n-i_1}{n})^{\frac32}\right)\right),
\end{split}\eeq
where $(1-x)^{3/2}(1+\tilde{s}(1-x))$ is the series that emerges as the antiderivative of the integrand in the second line of the display above and $\tilde{s}(1-x)=O((1-x)^{1/2})$.
Next, we compute $\EE X_i^2$ on the indices $i<n-n^{1/3}\sigma_n\log n $. By
\eqref{eqn:varXi} and Lemma \ref{lem:rho_properties},
\beq\label{eqn:prod_gX}
\EE X_i^2
= \a \frac{\delta_i}{|\rho_{i}^+|}(1+\tau_{i-1})^2\left(\omega_i+1+O(n^{-2/3}\sigma_n^{-1/2})\right).
\eeq
We then consider each factors on the right hand side individually. Using Lemma \ref{lem:wi} to obtain asymptotics for $\w_i$ with $i<n-n^{1/3}\sigma_n\log n $, we obtain $\omega_i+1+O(n^{-\frac23}\sigma_n^{-\frac12})
=2+O\left(\sqrt{\tfrac{n-i}{n}}\right)$ and
\beq
\frac{\d_i}{|\rho_i^+|}=\frac{\omega_i}{\tau_{i-1}|\rho_i^+|}
=\frac{1-O(\sqrt{\frac{n-i}{n}})}{n(\la^{-1}-\frac{n-i+1}{n})}
=\frac{\la}{n}\left(1+O\left(\sqrt{\tfrac{n-i}{n}}\right)\right).
\eeq
Finally, we have
\beq\label{eqn:1+tau_sq}
(1+\tau_{i-1})^2=\left(1+\frac{m-n+i-1}{|\rho_{i-1}^+|}\cdot\frac{|\rho_{i-1}^-|}{|\rho_{i-1}^-|} \right)^2
=\left(1+\frac{|\rho_{i-1}^-|}{n\left(1-\frac{n-i+2}{n}\right)}\left(1+O(n^{-1})\right)\right)^2.
\eeq
Since $\sigma_n n^{-2/3}\ll \frac{n-i}{n}$, computations in the proof of Lemma \ref{lem:sumA0i} (in particular, \eqref{eqn:sqrtUi} and \eqref{eqn:Vi}) imply that
\beq
|\rho_{i-1}^-|=\frac12\left(\g m-(m-n+2i-3)-\sqrt{U_{i-1}}\right)=
\la^{-\frac12}n\left(1+O\left(\sqrt{\tfrac{n-i}{n}}\right)\right).
\eeq
Thus, by \eqref{eqn:1+tau_sq},
\beq
(1+\tau_{i-1})^2
=\left(\la^{-\frac12}+1\right)^2+O\left(\sqrt{\tfrac{n-i}{n}}\right).
\eeq
Putting all factors of \eqref{eqn:prod_gX} together, we conclude that, for $i<n-n^{1/3}\sigma_n\log n $,
\beq\label{eq:Xvarspecialized}
\EE X_i^2
=2\a n^{-1}(1+\la^{1/2})^2\left(1+O\left(\sqrt{\tfrac{n-i}{n}}\right)\right).
\eeq
We use the notation $C_0=\frac43(1+\lambda^{1/2})$ and $C_1=2 (\lambda^{1/2}+1)^2$ and combine \eqref{eq:wiproductcompute} and \eqref{eq:Xvarspecialized} to obtain  
\beq\label{eq:sum_to_approx_with_Gamma}\begin{split}
&\sum_{i=n-n(\log n)^{-2}}^j(\omega_{i+3}\cdots\omega_{j+2})^2
\EE X_{i+2}^2=\\
& \sum_{i=n-n(\log n)^{-2}}^j 
\exp\left(-2C_0n\left[\left(\tfrac{n-i-3}{n}\right)^{\frac32}(1+\tilde{s}(\tfrac{n-i-3}{n}))-\left(\tfrac{n-j-2}{n}\right)^{\frac32}(1+\tilde{s}(\tfrac{n-j-2}{n}))\right]  \right)\frac{\alpha C_1+O((\tfrac{n-i}{n})^{\frac12})}{n}.
\end{split}\eeq
This gives us
\beq\begin{split}
&\sum_{i=n-n(\log n)^{-2}}^j(\omega_{i+3}\cdots\omega_{j+2})^2
\EE X_{i+2}^2\\
&=\alpha C_1\left(1+O((\log n)^{-1})\right)\int^{(\log n)^{-2}}_{1-\frac{j}{n}}
\exp\left(-2C_0n\left[x^{\frac32}(1+\tilde{s}(x))-(1-\tfrac{j}{n})^{\frac32}(1+\tilde{s}(1-\tfrac{j}{n}))\right]\right)dx.
\end{split}\eeq
Next, we make the change of variables $u=x(1+\tilde{s}(x))^{2/3}(2C_0n)^{2/3}$.  Noting that $du=(2C_0n)^{2/3}(1+O(\tilde{s}(x)))dx$, the right hand side of the display above becomes
\beq\begin{split}
&\frac{\alpha C_1(1+O((\log n)^{-1}))}{(2C_0n)^{2/3}}\exp\left(2C_0n(1-\tfrac{j}{n})^{\frac32}(1+\tilde{s}(1-\tfrac{j}{n}))\right)
\int_{(2C_0n)^{\frac23}(1-\frac{j}{n})(1+\tilde{s}(1-\frac{j}{n}))^{\frac23}}
^{(2C_0n)^{\frac23}(\log n)^{-2}
(1+\tilde{s}((\log n)^{-2}))^{\frac23}
}
\exp(-u^{3/2})du.
\end{split}\eeq
The integrand $\exp(-u^{3/2})$ has antiderivative $-\frac23\Gamma\left(\frac23,u^{3/2}\right)$.  Furthermore, the asymptotics of the incomplete Gamma function (see Digital Library of Mathematical Functions 8.11.2) are
\beq
\Gamma(a,z)=z^{a-1}e^{-z}(1+O(z^{-1}))\quad\text{for fixed }a\text{ and }z\to\infty.
\eeq
Applying this to the preceding equation, we get 
\beq\label{eq:A4_dominant_inner_sum}\begin{split}
\sum_{i=n-n(\log n)^{-2}}^j(\omega_{i+3}\cdots\omega_{j+2})^2
\EE X_{i+2}^2
= \frac{\alpha C_1}{3C_0n} (1-\tfrac{j}{n})^{-1/2}(1+O((\log n)^{-1})).
\end{split}\eeq
It remains to calculate $\w_{j+3}g_{j+4}=\sum_{k=j}^{n-3}\w_{j+3}\cdots\w_{k+3}$ and then compute the outer sum in the expression \eqref{eq:A4sum_brokenup}.  Using Lemmas \ref{lem:gi_bounds} and \ref{lem:wi} for indices $j$ in our desired range, we have the lower bound
\beq\label{eq:S1_lowerbound}
\w_{j+3}g_{j+4}\geq\frac{\w_{j+3}(1+(\log n)^{-1})}{1-\w_{j+4}}=\frac{1+O\left((\log n)^{-1}+(\frac{n-(j+4)}{n})^{\frac12}\right)}{2(1+\la^{1/2})\left(\frac{n-(j+4)}{n}\right)^{1/2}}
=\frac{1+O\left((\log n)^{-1}+(\frac{n-j}{n})^{\frac12}\right)}{\frac32 C_0\left(\frac{n-j}{n}\right)^{1/2}}.
\eeq
The analogous upper bound obtained from Lemma \ref{lem:gi_bounds} is not tight enough.  Instead, we upper bound $\w_{j+3}g_{j+4}$ by rewriting it as two sums
\beq
\w_{j+3}g_{j+4}=\sum_{k=j}^{n-n^{1/3}\sigma_n}\w_{j+3}\cdots\w_{k+3}+\sum_{k=n-n^{1/3}\sigma_n+1}^{n-3}\w_{j+3}\cdots\w_{k+3}=:S_1+S_2.
\eeq
We will show that $S_1\leq\frac{1+O\left((\frac{n-j}{n})^{\frac12}\right)}{\frac32 C_0\left(\frac{n-j}{n}\right)^{1/2}}$ while $S_2=o(1)$.
For $S_2$, we have
\beq\label{eq:S2bound}\begin{split}
S_2
&=(\w_{j+3}\cdots\w_{n-n^{1/3}\sigma_n+4})\left(1+ \sum_{k=n-n^{1/3}\sigma_n+2}^{n-3}\w_{n-n^{1/3}\sigma_n+5}\cdots\w_{k+3}\right) \\
&=(\w_{j+3}\cdots\w_{n-n^{1/3}\sigma_n+4})g_{n-n^{1/3}\sigma_n+5}\\
&< (\w_{n-n^{1/3}\sigma_n\log n }\cdots\w_{n-\frac12 n^{1/3}\sigma_n\log n })g_{n-n^{1/3}\sigma_n +5}.
\end{split}\eeq
Equation \eqref{eq:wiproductcompute} implies that, for some $C>0$, the product above has the bound
\beq
\w_{n-n^{1/3}\sigma_n\log n }\cdots\w_{n-\frac12 n^{1/3}\sigma_n\log n }\leq \exp(-C(\sigma_n\log n )^{3/2}).
 \eeq
  Meanwhile, Lemmas \ref{lem:gi_bounds} and \ref{lem:wi} imply that $g_{n-n^{1/3}\sigma_n +5}=O(n^{1/3}\sigma_n^{-1/2})$.  Thus, we conclude that $S_2=o(1)$.
  
  For $S_1$, we will make use of the the asymptotic in \eqref{eq:wiproductcompute} to bound the product $\w_{j+3}\cdots\w_{k+3}$.  Although \eqref{eq:wiproductcompute} is only a valid asymptotic expression for indices satisfying $n-i\gg n^{1/3}\sigma_n$, it is nevertheless a valid upper bound for all indices in the range covered by $S_1$.  This is because \eqref{eq:wiproductcompute} was obtained using the approximation of $\w_i$ in Lemma \ref{lem:wi}\eqref{lem:wi(3)} which is an upper bound for the approximation in Lemma \ref{lem:wi}\eqref{lem:wi(2)} when $n-i=\Theta(n^{1/3}\sigma_n$).  Thus we obtain
  \beq
  S_1\leq\sum_{k=j}^{n-n^{1/3}\sigma_n}\exp\left(-C_0n\left[\left(\tfrac{n-j-3}{n}\right)^{\frac32}(1+\tilde{s}(\tfrac{n-j-3}{n}))-\left(\tfrac{n-k-3}{n}\right)^{\frac32}(1+\tilde{s}(\tfrac{n-k-3}{n}))\right] \right)(1+O(\tfrac{n-j-3}{n})^{\frac32}).
  \eeq
  The asymptotics of this sum can be computed using a similar approach to the computation of the sum in \eqref{eq:sum_to_approx_with_Gamma}.  Using this method, we arrive at
  \beq
  S_1\leq\frac{2}{3C_0}\left(\frac{n-j-3}{n}\right)^{-1/2} (1+O((\tfrac{n-j-3}{n})^{1/2})).
  \eeq
  Combining this with \eqref{eq:S1_lowerbound} and \eqref{eq:S2bound} along with the fact that $j\geq n-n(\log n)^{-2}$, we conclude that
  \beq\label{eq:A4_dominant_non-i}
  \w_{j+3}g_{j+4}=\frac{2}{3C_0}\left(\frac{n-j}{n}\right)^{-1/2}(1+O((\log n)^{-1})).
  \eeq
Finally, plugging the results from \eqref{eq:A4_dominant_inner_sum} and \eqref{eq:A4_dominant_non-i} into the summation \eqref{eq:A4sum_brokenup}, we get
\beq\begin{split}
&\sum_{j=n-n(\log n)^{-2}}^{n-n^{1/3}\sigma_n\log n }\;
\sum_{i=n-n(\log n)^{-2}}^j(\omega_{i+3}\cdots\omega_{j+2})^2
\EE X_{i+2}^2
\cdot\omega_{j+3}g_{j+4}\\
&=\sum_{j=n-n(\log n)^{-2}}^{n-n^{1/3}\sigma_n\log n }\frac{\alpha C_1}{3C_0 n}(1-\tfrac{j}{n})^{-1/2}\cdot\frac{2}{3C_0}(1-\tfrac{j}{n})^{-1/2}(1+O((\log n)^{-1}))\\
&=\frac{2\alpha C_1}{9C_0^2}\int_{n^{-2/3}\sigma_n\log n }^{(\log n)^{-2}}z^{-1}dz(1+O((\log n)^{-1}))
=\frac{2\alpha C_1}{9C_0^2}\left(\frac23\log n+O(\log\log n)\right)=\frac\a6 \log n+O(\log\log n).
\end{split}\eeq
Next, we must consider the terms in \eqref{eq:A4sum_indexreverse} whose indices do not satisfy $n-n(\log n)^{-2}<i\leq j<n-n^{1/3}\sigma_n\log n$ and we must show that the sum over those indices is $O(\log\log n)$.  More specifically, we will show that 
\begin{enumerate}[(a)]
\item 
$\displaystyle\sum_{j=n-n^{1/3}\sigma_n\log n}^{n-3}\;\sum_{i=1}^j 
(\omega_{i+3} \dots \omega_{j+2})^2\omega_{j+3}g_{j+4} \bE X_{i+2}^2=O(\log\log n),$
\item
$\displaystyle\sum_{j=1}^{n-n(\log n)^{-2}}\sum_{i=1}^j
(\omega_{i+3} \dots \omega_{j+2})^2\omega_{j+3}g_{j+4} \bE X_{i+2}^2=O(\log\log n),$
\item
$\displaystyle\sum^{n-n^{1/3}\sigma_n\log n}_{j=n-n(\log n)^{-2}}\;\sum_{i=1}^{n-n(\log n)^{-2}}(\omega_{i+3} \dots \omega_{j+2})^2\omega_{j+3}g_{j+4} \bE X_{i+2}^2=O(1).$
\end{enumerate}
To prove (a), we begin by using Lemma \ref{lem:gi_bounds} and the fact that $\EE X_i^2=O(n^{-1})$ to observe that, for some constant $C$,
\beq\label{eq:inner_sum_bound}
\sum_{i=1}^j (\omega_{i+3} \dots \omega_{j+2})^2\omega_{j+3}g_{j+4} \bE X_{i+2}^2\leq\sum_{i=1}^j \w_{j+2}^{2(j-i)}\cdot \frac{C}{n(1-\w_{j+3})}
\leq \frac{1}{1-\w_{j+2}^2}\cdot \frac{C}{n(1-\w_{j+3})}.
\eeq
Using Lemma \ref{lem:wi} and the fact that $\w_j$ is increasing in $j$, we conclude
\beq\begin{split}
&\sum_{j=n-n^{1/3}\sigma_n\log n}^{n-3}\;\sum_{i=1}^j 
(\omega_{i+3} \dots \omega_{j+2})^2\omega_{j+3}g_{j+4} \bE X_{i+2}^2\\
&\leq \sum_{j=n-n^{1/3}\sigma_n\log n}^{n} 
\frac{1}{1-\w_{j}^2}\cdot \frac{C}{n(1-\w_{j})}
\leq  \sum_{j=n-n^{1/3}\sigma_n\log n}^{n-n^{1/3}\sigma_n}\frac{C}{n(1-\omega_j)^2} 
+\sum_{j=n-n^{1/3}\sigma_n+1}^{n}\frac{C}{n(1-\omega_j)^2}\\
&=O\left(\int_{n^{1/3}\sigma_n}^{n^{1/3}\sigma_n\log n}x^{-1}dx\right)+O(1)
=O(\log\log n).
\end{split}\eeq
To prove (b) we observe that inequality \eqref{eq:inner_sum_bound} still holds.  Using this, we obtain the following result, where $C_1$ is the constant that comes from applying Corollary \ref{cor:wi}:
\beq\begin{split}
&\sum_{j=1}^{n-n(\log n)^{-2}}\sum_{i=1}^j
(\omega_{i+3} \dots \omega_{j+2})^2\omega_{j+3}g_{j+4} \bE X_{i+2}^2
\leq \sum_{j=1}^{n-n(\log n)^{-2}} \frac{1}{1-\w_{j+2}^2}\cdot\frac{C}{n(1-\omega_{j+4})}\\
&\leq \sum_{j=1}^{n-n(\log n)^{-2}}\frac{C}{n\cdot C_1^2\left(\frac{n-(j+4)}{n}\right)}
=O\left(\int_{(\log n)^{-2}}^1\frac1x dx \right)=O(\log\log n).
\end{split}\eeq
To prove (c) we observe that, on the indices we consider,
\beq\begin{split}
\left(\w_{i+3}\cdots\w_{j+2}\right)^2&=\left(\w_{i+3}\cdots\w_{\lfloor n-n(\log n)^{-2}\rfloor}\cdots\w_{j+2}\right)^2\\
&<\left(\w_{i+3}\cdots\w_{\lfloor n-n(\log n)^{-2}\rfloor}\right)^2
<\left(\w_{\lfloor n-n(\log n)^{-2}\rfloor}^2\right)^{\lfloor n-n(\log n)^{-2}\rfloor-(i+2)}.
\end{split}\eeq
Using this, along with Lemma \ref{lem:wi} and Corollary \ref{cor:wi}, we conclude
\beq\begin{split}
&\sum^{n-n^{1/3}\sigma_n\log n}_{j=n-n(\log n)^{-2}}\;\sum_{i=1}^{n-n(\log n)^{-2}}(\omega_{i+3} \dots \omega_{j+2})^2\omega_{j+3}g_{j+4} \bE X_{i+2}^2\\
&\leq \sum^{n-n^{1/3}\sigma_n\log n}_{j=n-n(\log n)^{-2}}\;\sum_{i=1}^{n-n(\log n)^{-2}}
\left(\w_{\lfloor n-n(\log n)^{-2}\rfloor}^2\right)^{\lfloor n-n(\log n)^{-2}\rfloor-(i+2)}\frac{1}{1-\w_{j+4}}\cdot O\left(\frac{1}{n}\right)\\
&\leq  \sum^{n-n^{1/3}\sigma_n\log n}_{j=n-n(\log n)^{-2}}
\frac{1}{1-\w_{\lfloor n-n(\log n)^{-2}\rfloor}^2}\cdot \frac{1}{(\frac{n-j}{n})^{1/2}}\cdot O\left(\frac{1}{n}\right).
\end{split}\eeq
To simplify this we use the fact that $\frac{1}{1-\w_{\lfloor n-n(\log n)^{-2}\rfloor}^2}=O(\log n)$ and we rewrite the summation as an integral, so the entire expression above becomes
\beq
O(\log n)\cdot \int_{n^{-2/3}\sigma_n\log n}^{(\log n)^{-2}}\frac{1}{x^{1/2}}dx
=O(1).
\eeq

\section{CLT for \texorpdfstring{$\sum_{i = 3}^n L_i$}{sum Li}}\label{sec:A.1}
From \eqref{eqn:Lj_full} and Definition \ref{defn:gi},
\beqq
\sum_{i = 3}^n L_i = \sum_{i = 3}^n g_{i+1}X_i + \sum_{i = 3}^n \a_i - g_3\a_2.
\eeqq
In this section, we show that $\sum_{i = 3}^n L_i$ satisfies the CLT
\beq
\frac{\sum_{i = 3}^n L_i}{\left(\sum_{i = 3}^n g_{i+1}^2\bE X^2_i\right)^{1/2}} \stackrel{d}{\to} \mathcal{N}(0,1)
\eeq
by showing the following claims.
\begin{enumerate}
	\item The mean-zero random variable $\sum_{i = 3}^n g_{i+1}X_i$ satisfies Lyapunov condition 
	\beq\label{eqn:Lyapunov}
	\frac{\sum_{i = 3}^n g^4_{i+1}\bE X^4_i}{\left(\sum_{i = 3}^n g_{i+1}^2 \bE X^2_i\right)^2} \to 0 \quad \text{ as } n \to \infty.
	\eeq
	\item $\frac{ \sum_{i = 3}^n \a_i - g_3\a_2}{\left(\sum_{i = 3}^n g_{i+1}^2\bE X^2_i\right)^{1/2}}$ converges to 0 in probability.
\end{enumerate} 
Here, knowing the order of the variance $\sum_{i = 3}^n g_{i+1}^2\bE X^2_i$ is sufficient for both claims, so we delay the computation of its leading term to the end of the section.

We now verify \eqref{eqn:Lyapunov}. It follows from \eqref{eqn:varXi} and   $|\rho_{i}^+| = \Theta(n)$ uniformly in $i$ that
\beq\label{eqn:bE}
\bE X_i^2 
 \geq \frac{C\a\delta_i}{n}.
\eeq 
Meanwhile, the uniform lower bound $|\rho_{i}^+| = \Omega(n)$ and Lemma \ref{lem:SG_boucheron} with $p=4$ imply
\beq
\bE X_i^4 \leq C\a^4(1+\tau_{i-1})^4\left(\frac{\delta_i^2}{|\rho_{i}^+|^2}+\frac1{|\rho_{i}^+|^4}\right)=O\left(\frac{\a^4\delta_i^2}{n^2}\right).
\eeq
We also need to estimate powers of $g_i$ for $3\leq i \leq n$. The following lemma shows that for most indices $i$, $g_i$ is of order $(1 -\w_i)^{-1}$. 
\begin{lemma}\label{lem:gi_bounds}
	Let $\{g_i\}_{i=3}^{n+1}$ be as above (see Definition \ref{defn:gi}). Then, 
	\begin{enumerate}[(i)]
		\item\label{lem:gi_bounds_lower} for any $k>0$ and sufficiently large $n$, $g_i > \frac{1 - \log^{-k}n}{1- \w_i}$ for all $3 \leq i \leq n -n^{1/3}$,
		\item\label{lem:gi_bounds_upper} for sufficiently large $n$, $g_i < \frac{1 + \sigma_n^{-3/2}}{1 - \w_i}$ for all $3 \leq i\leq n$.
	\end{enumerate}
\end{lemma}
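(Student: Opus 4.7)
The plan is to work from the explicit telescoping representation
\[
g_i = \sum_{k=0}^{n-i+1}\omega_i\omega_{i+1}\cdots\omega_{i+k-1}
\]
(with the empty product equal to $1$), combined with the monotonicity of $\omega_j$ in $j$ noted just after Lemma \ref{lem:rho_properties}.

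For part \eqref{lem:gi_bounds_lower}, since $\omega_j\geq\omega_i$ for every $j\geq i$, each summand satisfies $\omega_i\cdots\omega_{i+k-1}\geq \omega_i^k$, so summing the geometric series yields $g_i\geq (1-\omega_i^{n-i+2})/(1-\omega_i)$. It therefore suffices to show $\omega_i^{n-i+2}\leq \log^{-k}n$, or equivalently $(n-i)(1-\omega_i)\gtrsim k\log\log n$, uniformly in $3\leq i\leq n-n^{1/3}$. I would split into the two regimes from Corollary \ref{cor:wi}: when $i\leq n-n^{1/3}\sigma_n$, $(n-i)(1-\omega_i)\geq C(n-i)^{3/2}/n^{1/2}\geq C\sigma_n^{3/2}$, and when $n-n^{1/3}\sigma_n< i\leq n-n^{1/3}$, $(n-i)(1-\omega_i)\geq C n^{1/3}\cdot n^{-1/3}\sigma_n^{1/2}=C\sigma_n^{1/2}$. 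Both dominate $\log\log n$ under the assumption $\sigma_n\gg(\log\log n)^2$.

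For part \eqref{lem:gi_bounds_upper}, I would set $h_i:=(1-\omega_i)g_i$, so that the recursion $g_i=1+\omega_i g_{i+1}$ becomes
\[
h_i=(1-\omega_i)+\phi_i h_{i+1},\qquad \phi_i:=\frac{\omega_i(1-\omega_i)}{1-\omega_{i+1}}.
\]
Since $h_n=1-\omega_n^2<1$, a downward induction on $i$ from $i=n$ will give $h_i\leq 1+\sigma_n^{-3/2}$ provided the step inequality
\[
(1+\sigma_n^{-3/2})\,\omega_i(\omega_{i+1}-\omega_i)\;\leq\; \sigma_n^{-3/2}\,(1-\omega_i)(1-\omega_{i+1})
\]
holds uniformly in $i$; indeed, this is exactly the algebraic condition that $(1-\omega_i)+\phi_i(1+\sigma_n^{-3/2})\leq 1+\sigma_n^{-3/2}$. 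The uniform bound $\omega_{i+1}-\omega_i=O(n^{-2/3}\sigma_n^{-1/2})$ follows from Lemma \ref{lem:rho_properties}\eqref{lem:rho_properties_rhoratiocompare} together with the approximation $\omega_i=\g_i+O(n^{-1}(1-\omega_i)^{-1})$ of Lemma \ref{lem:gi-wi}, while Corollary \ref{cor:wi} yields $(1-\omega_i)(1-\omega_{i+1})\gtrsim n^{-2/3}\sigma_n$; the two sides therefore match up to a constant factor.

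The main obstacle is ensuring the implicit constants close the induction at the stated coefficient $1$ in front of $\sigma_n^{-3/2}$; this likely requires extracting the leading-order of $\omega_{i+1}-\omega_i$ directly from the explicit expressions for $\rho_i^\pm$ rather than invoking only the loose $O$-bound. If the matching happens only up to a constant $K$, the argument still delivers $g_i\leq (1+K\sigma_n^{-3/2})/(1-\omega_i)$, which is sufficient for every subsequent use in the paper since the factor $\sigma_n^{-3/2}$ enters only as a vanishing error term and can be absorbed into an $o(1)$.
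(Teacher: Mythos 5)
Your part \eqref{lem:gi_bounds_lower} is essentially the paper's proof: both use monotonicity of $\w_j$ to dominate $g_i$ by a geometric series $\sum_{k=0}^{n-i+1}\w_i^k=(1-\w_i^{n-i+2})/(1-\w_i)$ and then show the tail $\w_i^{n-i+2}$ is below $\log^{-k}n$. The paper dispenses with your two-regime split by just using the uniform bound $\w_i\le 1-cn^{-1/3}\sigma_n^{1/2}$ from Corollary~\ref{cor:wi} for all $i$, so $\w_i^{n-i+2}\le (1-cn^{-1/3}\sigma_n^{1/2})^{n^{1/3}}<e^{-c\sigma_n^{1/2}}$ when $i\le n-n^{1/3}$, which beats $\log^{-k}n$ because $\sigma_n^{1/2}\gg\log\log n$; your split gives the same conclusion (the binding regime is $n-n^{1/3}\sigma_n<i\le n-n^{1/3}$) with a little extra bookkeeping.

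For part \eqref{lem:gi_bounds_upper}, your substitution $h_i=(1-\w_i)g_i$ and downward induction are exactly what the paper does under the name $N_i=h_i-1-\sigma_n^{-3/2}$; after rearranging, the step inequality you wrote down is equivalent to the one the paper reduces to, namely $2(\w_i-\w_{i-1})<(1-\w_n)^2\sigma_n^{-3/2}$. You have correctly identified the crux: this cannot be closed from the loose bounds $\w_{i+1}-\w_i=O(n^{-2/3}\sigma_n^{-1/2})$ and $(1-\w_i)(1-\w_{i+1})\gtrsim n^{-2/3}\sigma_n$ alone, because both sides have the same order and the relative size of the implicit constants matters. The paper resolves this exactly the way you predict: it treats $\w_i$ as a function of $i/m$, differentiates, and shows $\w_i-\w_{i-1}<\frac1m\cdot\frac{(1+\la^{-1/2})^2/\sqrt{2}}{\sqrt{y_n}(1+\sqrt{y_{n-1}})^2}$ alongside $1-\w_n>2\sqrt{y_n}/(1+\sqrt{y_{n-1}})$, and then checks the numerical inequality $\tfrac1{\sqrt2}(1+\la^{-1/2})^2<4\la^{-1}$. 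Your fallback observation is also right: if one only proves $g_i\le(1+K\sigma_n^{-3/2})/(1-\w_i)$ for some constant $K$ (achievable by choosing $K$ large enough that $2C'\le C_1^2K$, where $C'$ and $C_1$ are the implicit constants), this is sufficient everywhere Lemma~\ref{lem:gi_bounds}\eqref{lem:gi_bounds_upper} is invoked, since the $\sigma_n^{-3/2}$ term only ever enters as an $o(1)$ multiplicative correction that is absorbed in subsequent asymptotics. So your proposal is a correct sketch; the one thing it leaves undone relative to the stated lemma is the explicit constant computation, which is the bulk of the paper's proof of \eqref{lem:gi_bounds_upper}.
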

\begin{proof}
	Fix $k>0$. Since $\{\w_i\}_{i=3}^n$ is increasing in $i$,
	\beq
	g_i > 1+\w_i+\w_i^2+\dots +\w_i^{n-i+1} = \frac{1 - \w_i^{n-i+2}}{1-\w_i}.
	\eeq 
	By Corollary \ref{cor:wi}, $\w_i \leq 1 - cn^{-1/3}\sigma_n^{1/2}$ for $3\leq i \leq n$. If in addition, $i \leq n -n^{1/3}$, then 
	\beq
	\w_i^{n-i+2} \leq \left(1 - cn^{-1/3}\sigma_n^{1/2}\right)^{n^{1/3}} < e^{-c\sigma_n^{1/2}}.
	\eeq
	The right hand side is less than $e^{-k \log\log n} = \log^{-k}n$ for sufficiently large $n$, so we obtain \eqref{lem:gi_bounds_lower}.
	
	For the upper bound, define $N_i = (1-\w_i)g_i - 1-\sigma_n^{-3/2}$. Then $g_i=\frac{N_i+1+\sigma_n^{-3/2}}{1-\w_i}$ and it suffices to show $N_i<0 $ for every $3 \leq i \leq n$. The case $i=n$ is clear, as
	\beq
	N_n = (1 - \w_n)(1+\w_n) - 1-\sigma_n^{-3/2} = -\w_n^2 - \sigma_n^{-3/2}<0.
	\eeq
	Suppose $N_i <0$. From the definition of $g_i$,
	\beq
	\begin{split}
		N_{i-1} 
		&= (1-\omega_{i-1})(1+\omega_{i-1}g_i) - 1 - \sigma_n^{-3/2}\\
		&= (1-\omega_{i-1}) \left(1 + \frac{\omega_{i-1}}{1 -\omega_{i}}(N_i + 1+\sigma_n^{-3/2})\right)- 1 - \sigma_n^{-3/2}.	
	\end{split}
	\eeq 
	By the induction hypothesis, 
	\beq
	\begin{split}
	N_{i-1} &<-\omega_{i-1} - \sigma_n^{-3/2} + \frac{\omega_{i-1}(1-\omega_{i-1})}{1 -\omega_{i}}( 1+\sigma_n^{-3/2})\\
	&=\frac{(\omega_{i-1} + \sigma_n^{-3/2})(\omega_i  -\omega_{i-1}) - (1-\omega_{i-1})^2 \sigma_n^{-3/2} }{1 - \omega_i}\\
	&<\frac{(\omega_{i-1} + \sigma_n^{-3/2})(\omega_i  -\omega_{i-1}) - (1-\omega_{n})^2 \sigma_n^{-3/2} }{1 - \omega_i}.
	\end{split}
	\eeq
	Note that $\omega_{i-1} + \sigma_n^{-3/2}\leq 2$. We then provide bounds for $\omega_i  -\omega_{i-1}$ and $1-\omega_{n}$, in order to show that the numerator is negative. We approach the first quantity by bounding the growth of $\w_i$, where $\w_i$ is considered as function of $i/m \in [3/m,\la]$. For brevity of the presentation, we define for $x \in [3/m,\la]$,
	\beq\label{eqn:fg}
	f(x) = \frac{x\left(\frac{m-n}{m} +x\right)}{\left(C_n - x \right)^2} \quad \text{and} \quad g(x)= \frac{1 -\sqrt{1 - f(x)}}{1 + \sqrt{1 - f(x - \frac{1}{m})}},
	\eeq
	where $C_n = \frac{\gamma m -(m-n+1)}{2m}$. Then $\frac{|\rho_i^\pm|}{m} 
	= \left(C_n - \frac{i-1}{m}\right) \left(1 \pm \sqrt{1 - f \left(\frac{i-1}{m}\right)}\right)$, which implies
	\begin{align}
		\w_i &= \frac{|\rho_i^-|}{|\rho_{i-1}^+|} 
		= \left(1 - \frac{1/m}{C_n  - \frac{i-2}{m}}\right) g \left(\frac{i-1}{m}\right).
	\end{align}
	Since both $f(x)$ and $f'(x) =  \frac{\frac{m-n}{m} + 2x}{(C_n - x)^2} +\frac{x\left(\frac{m-n}{m} +x\right) }{(C_n - x)^3}$ are increasing in $x$, so is 
	\begin{equation}\label{eqn:g'}
		g'(x) = \frac{\frac{1}{2}f'(x)/ \sqrt{1- f(x)}}{1 + \sqrt{1 - f(x - \frac{1}{m})}} + \frac{(1 -\sqrt{1 - f(x)} ) \frac{1}{2} f'(x-\frac{1}{m}) \big/ \sqrt{1- f(x - \frac{1}{m})} }{\left(1 + \sqrt{1 - f(x - \frac{1}{m})}\right)^2}.
	\end{equation}
	Therefore,
	\beq
	\w_i - \w_{i-1} \leq \left(1-\frac{1/m}{C_n-\frac{i-3}{n}}\right)\int_{\frac{i-2}m}^{\frac{i-1}m} g'(x)dx < \frac1mg'\left(\frac{n-1}m\right).	 
	\eeq
	Set $y_i = 1 -f \left(\frac{i-1}{m}\right)$. From \eqref{eqn:g'}, for sufficiently large $n$, 
	\begin{equation*}
		\begin{split}
			g'\left(\frac{n-1}m\right)	
			&=\frac{f'(\frac{n-1}{m})}{\sqrt{y_n}(1+\sqrt{y_{n-1}})^2} \cdot \frac{1+\sqrt{y_{n-1}} +\sqrt{\frac{y_n}{y_{n-1}}}(1-\sqrt{y_n})}2
			< \frac{f'(\frac{n-1}{m})}{\sqrt{y_n}(1 +\sqrt{y_{n-1}})^2}.
		\end{split}
	\end{equation*}
	
	We verify the above inequality by showing that for sufficiently large $n$,
	\beq\label{claim:yn}
	1 +\sqrt{1-\frac{y_{n-1}-y_n}{y_{n-1}}}+\frac{y_{n-1}-y_n}{\sqrt{y_{n-1}}}<2.
	\eeq
	We note the identity $y_i=\frac{U_i}{\left(\g m-(m-n+2i-1)\right)^2}$ for $i=3,\dots, n$, where $U_i$ is defined in \eqref{defn:Ui}. By \eqref{eqn:Ui_asymp} and the fact $\g=(1+\la^{\frac12})^2+\sigma_nn^{-\frac23}$, there are constants $1<c_1<c_2$ such that $c_1 \sigma_nn^{-\frac23} <y_n<y_{n-1}<c_1 \sigma_nn^{-\frac23}$. In addition, $U_{n-1}$ and $U_n$ are $\Theta(n^{4/3}\sigma_n)$ by \eqref{eqn:Ui_asymp}, and $U_{n-1}-U_n=4(\g m-1)=\Theta(n)$. Thus, 
	\begin{equation}
	y_{n-1}-y_n=\frac{\left(\g m-(m+n-1)\right)^2(U_{n-1}-U_n)-4\left(\g m-(m+n-2)\right)U_n}{\left(\g m-(m+n-1)\right)^2\left(\g m-(m+n-3)\right)^2}=\Theta(n^{-1}).
	\end{equation}
	Therefore, the left hand side of \eqref{claim:yn} has asymptotics $2-c_3n^{-\frac13}\sigma_n^{-1}+O(n^{-\frac23}\sigma_n^{-\frac12})$ as $n\to \infty$ for some $c_3>0$, and we obtain the claim. 
	
	We now consider $f'(\frac{n-1}{m})$. Note that $C_n - \frac{n-1}m = \sqrt{\la} + \frac{1}{2} n^{-1/3}\sigma_n + O(n^{-1})$, so using expression of $f'(x)$ as above, we have
	\beq
	\begin{split}
		f'(\frac{n-1}{m})
		= \frac{\frac{\la^{3/2}}{2}(1+\la^{-1/2})^{2}+\frac{1+\la}{2} n^{-2/3}\sigma_n + O(n^{-1})}{\left(\sqrt{\la} +\frac{1}{2}n^{-2/3} \sigma_n + O(n^{-1}) \right)^3}
		< \frac{1}{\sqrt{2}} (1+\la^{-1/2})^2.
	\end{split} 
	\eeq
	We obtain
	\beq\label{eqn:omega_diff}
	\omega_i - \omega_{i-1} 
	< \frac{1}{m} \cdot \frac{(1+\la^{-1/2})^2/\sqrt{2}}{ \sqrt{y_n}(1 +\sqrt{y_{n-1}})^2}.
	\eeq
	On the other hand,
	\beq\label{eqn:1-om}
	1 - \w_n > 1 - g \left(\frac{n-1}{m}\right) 
	= 1 - \frac{1 -\sqrt{y_n}}{1 + \sqrt{y_{n-1}}} 
	> \frac{2  \sqrt{y_n} 
	}{1 + \sqrt{y_{n-1}}}.
	\eeq
	Displays \eqref{eqn:omega_diff} and \eqref{eqn:1-om} together imply
	\beq
	\begin{split}
		N_{i-1} 
		&< \frac{\frac{1}{m} \cdot \frac{ (1 + \sigma_n^{-3/2}) \frac{1}{\sqrt{2}} (1+\la^{-1/2})^2 }{\sqrt{y_n}(1 + \sqrt{y_{n-1}})^2} - \frac{4 y_n}{(1 + \sqrt{y_{n-1}})^2} \sigma_n^{-3/2}}{1 - \omega_i}\\
		&= \frac{\frac{1}{\sqrt{2}} (1+\la^{-1/2})^2 - 4m y_n^{3/2}\sigma_n^{-3/2} + (1+\la^{-1/2})^2 \sigma_n^{-3/2} }{m(1 - \omega_i) \sqrt{y_n} (1 + \sqrt{y_{n-1}})^2}.
	\end{split}
	\eeq
	Since $y_n> n^{-2/3}\sigma_n$ and $0<\la \leq 1$ for all $n$, the numerator is less than $\frac{1}{\sqrt{2}} (1+\la^{-1/2})^2- 4\la^{-1} + O(\sigma_n^{-3/2})$, 
	which is negative for sufficiently large $n$. Therefore $N_{i-1} <0$ for sufficiently large $n$.
\end{proof}
Combining Lemma \ref{lem:gi_bounds} and Corollary \ref{cor:wi}, we obtain
\beq
\begin{split}
	\frac{\sum_{i = 3}^n g^4_{i+1}\bE X^4_i}{\left(\sum_{i = 3}^n g_{i+1}^2 \bE X^2_i\right)^2} 
	&= O \left(\frac{\sum_{i = 3}^n (1 - \w_{i+1})^{-4}\delta_i^2}{\left(\sum_{i = 3}^{n - n^{1/3}\sigma_n} (1 - \w_{i+1})^{-2}\delta_i \right)^2}\right) \\
	&=O\left(\frac{\sum_{i = 3}^{n-n^{1/3}\sigma_n}\left(\frac{n}{n-i}\right)^2 \left(\frac{i-1}{n}\right)^2+\sum_{i>n-n^{1/3}\sigma_n}(n^{1/3}\sigma_n^{-1/2})^4\left(\frac{i-1}{n}\right)^2}{\left(\sum_{i = 3}^{n-n^{1/3}\sigma_n}\frac{n}{n-i}\cdot \frac{i-1}{n}\right)^2}\right)\\
	&= o(n^{-1/3}).
\end{split}
\eeq
Therefore, Lyapunov condition \eqref{eqn:Lyapunov} holds.

The above computations suggest the variance $\sum_{i = 3}^n g_{i+1}^2\bE X^2_i$ is increasing in $n$, with lower bound $C \log n$ for some constant $C>0$. As $\sum_{i = 3}^n \a_i - g_3\a_2$ belongs to some sub-gamma family $\SG(v,u)$, Lemma \ref{lem:SG_boucheron} with $t =\sqrt{\log n}$ implies 
\beq
\PP\left(\left|\sum_{i = 3}^n \a_i - g_3\a_2\right| > \sqrt{2vt}+ut\right)\leq 2n^{-1/2}.
\eeq
Hence, the claim on convergence to zero in probability holds as long as the parameters $v$, $u$ satisfy $v = o(\sqrt{\log n})$ and $u = o(1)$. Indeed, by Lemma \ref{lem:SG},
\beq
v = \a \left(\frac{g_3^2 \tau_2}{|\rho_2^+|} +\sum_{i=3}^n \frac{\tau_i}{|\rho_i^+|} \right) = O(1), \quad u = \a \max \left\{\frac{g_3}{|\rho_2^+|}, \frac1{|\rho_i^+|}: 3 \leq i \leq n \right\} = O(n^{-1}).
\eeq 

We now provide the asymptotics for $\sum_{i = 3}^n g_{i+1}^2\bE X^2_i$. We will show that dominant contribution to the sum comes from indices $i\leq n -n^{1/3}\sigma_n \sqrt{\log n}$, while the sum over the remaining indices is at most order $\sqrt{\log n}$. 
\begin{lemma}\label{lem:variance_leadingterm}
	\beq
	\sum_{i = 3}^n g_{i+1}^2\bE X^2_i = \frac{\a}{3} \log n +o(\log n).
	\eeq
\end{lemma}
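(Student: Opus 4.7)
The plan is to mirror the strategy from the proof of Lemma \ref{lem:B3i_bigstuff}, splitting the sum into three pieces by proximity of $i$ to $n$: a bulk part $3\leq i \leq n - n(\log n)^{-1}$, a dominant middle part $n - n(\log n)^{-1} < i \leq n - n^{1/3}\sigma_n\sqrt{\log n}$, and a near-edge part $n - n^{1/3}\sigma_n\sqrt{\log n} < i \leq n$. Only the middle piece should produce the $\frac{\alpha}{3}\log n$ main term, with the other two shown to be $o(\log n)$.

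For the middle piece, I would plug in the fine asymptotics already established. Lemma \ref{lem:gi_bounds} gives $g_{i+1} = (1-\omega_{i+1})^{-1}(1+o(1))$ uniformly; Lemma \ref{lem:wi}(iii) gives $(1-\omega_{i+1})^{2} = 4(1+\lambda^{1/2})^2 \frac{n-i}{n}(1+o(1))$ in the regime $n^{1/3}\sigma_n \ll n-i \ll n$; and the asymptotics $\delta_i \to \lambda^{1/2}$, $1+\tau_{i-1} \to \lambda^{-1/2}(1+\lambda^{1/2})$, $\omega_i \to 1$, $|\rho_i^+|, |\rho_{i-1}^+| \sim \lambda^{-1/2}n$ plugged into the identity $\EE X_i^2 = \alpha\delta_i(1+\tau_{i-1})^2\left(\frac{\omega_i}{|\rho_{i-1}^+|} + \frac{1}{|\rho_i^+|}\right)$ yield $\EE X_i^2 = \frac{2\alpha(1+\lambda^{1/2})^2}{n}(1+o(1))$ uniformly on the middle range. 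Multiplying, the $(1+\lambda^{1/2})^2$ factors cancel and
\beq
g_{i+1}^2\EE X_i^2 = \frac{\alpha}{2(n-i)}(1+o(1)).
\eeq
Converting the sum to an integral in $u = n-i$,
\beq
\sum_{i = n-n(\log n)^{-1}}^{n-n^{1/3}\sigma_n\sqrt{\log n}} g_{i+1}^2\EE X_i^2 = \frac{\alpha}{2}\int_{n^{1/3}\sigma_n\sqrt{\log n}}^{n/\log n}\frac{du}{u}(1+o(1)) = \frac{\alpha}{3}\log n + o(\log n),
\eeq
where the last equality uses $\log((\log n)^{-1}) - \log(n^{-2/3}\sigma_n\sqrt{\log n}) = \frac{2}{3}\log n + o(\log n)$, which in turn relies on $\sigma_n \ll (\log n)^2$ so that $\log\sigma_n = o(\log n)$.

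For the bulk part, Corollary \ref{cor:wi}(i) combined with Lemma \ref{lem:gi_bounds}(ii) gives $g_{i+1}^2 = O(n/(n-i))$, and $\EE X_i^2 = O(\delta_i/n) = O(i/n^2)$, so the contribution is at most $\sum_{i=3}^{n-n/\log n}\frac{i}{n(n-i)} = O(\log\log n)$. For the near-edge part, Corollary \ref{cor:wi}(ii) gives $g_{i+1}^2 = O(n^{2/3}\sigma_n^{-1})$ and $\EE X_i^2 = O(n^{-1})$, so each term is $O(n^{-1/3}\sigma_n^{-1})$ and summing over $O(n^{1/3}\sigma_n\sqrt{\log n})$ indices produces $O(\sqrt{\log n}/\sigma_n^{1/2})$. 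Combining the three pieces yields the stated asymptotic.

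The main technical care is verifying that the $o(1)$ terms in the asymptotic for $\EE X_i^2$ on the middle range are genuinely uniform in $i$ with a quantitatively small remainder, so that multiplying by $g_{i+1}^2$ and summing does not destroy the $o(\log n)$ bound on the error. Since Lemmas \ref{lem:wi} and \ref{lem:gi_bounds} provide error terms vanishing uniformly as $\sqrt{(n-i)/n} \to 0$, this should be routine but requires an explicit Taylor expansion of $|\rho_i^+|$ beyond its leading $\lambda^{-1/2}n$ order, in the same spirit as the near-edge expansion carried out in the proof of Lemma \ref{lem:sumA0i}.
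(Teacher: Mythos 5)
Your proposal is correct and takes essentially the same route as the paper: you expand $g_{i+1}^2$ and $\EE X_i^2$ using Lemmas \ref{lem:gi_bounds}, \ref{lem:wi}, and Corollary \ref{cor:wi}, observe the $(1+\la^{1/2})^2$ cancellation to get $g_{i+1}^2\EE X_i^2 \approx \frac{\a}{2(n-i)}$, and extract $\tfrac{\a}{3}\log n$ from the resulting logarithmic integral, cutting off at $n-i\sim n^{1/3}\sigma_n\sqrt{\log n}$ at one end. The paper treats the whole range $3\leq i\leq n-n^{1/3}\sigma_n\sqrt{\log n}$ as the ``main'' part (applying the $\sqrt{(n-i)/n}$-type asymptotics somewhat informally for $n-i\sim n$, which is harmless since that region contributes only $O(1)$), whereas you peel off the bulk $i\leq n-n/\log n$ explicitly and bound it by $O(\log\log n)$ — a cleaner bookkeeping choice that avoids applying Lemma \ref{lem:wi}(iii) outside its stated range. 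One small arithmetic slip: in the near-edge range $n-n^{1/3}\sigma_n\sqrt{\log n}<i\leq n$, with $g_{i+1}^2=O(n^{2/3}\sigma_n^{-1})$ and $\EE X_i^2=O(n^{-1})$, multiplying the per-term bound $O(n^{-1/3}\sigma_n^{-1})$ by the $O(n^{1/3}\sigma_n\sqrt{\log n})$ indices gives $O(\sqrt{\log n})$, not $O(\sqrt{\log n}/\sigma_n^{1/2})$; either way it is $o(\log n)$, so the conclusion stands. Also note that your citation of Corollary \ref{cor:wi}(ii) strictly applies only for $i\geq n-n^{1/3}\sigma_n$; for $n-n^{1/3}\sigma_n\sqrt{\log n}<i<n-n^{1/3}\sigma_n$ you need part (i), but the resulting upper bound $g_{i+1}^2=O(n^{2/3}\sigma_n^{-1})$ still holds there.
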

\begin{proof}
We begin by showing that the terms with indices $n-n^{1/3}\sigma_n\sqrt{\log n}\leq i\leq n-n^{1/3}\sigma_n$ and $n-n^{1/3}\sigma_n\leq i\leq n$, together, contribute only $O(\sqrt{\log n})$ to the sum. In these calculations, we use the fact that $\EE X_i^2=O(n^{-1})$ uniformly in $i$ and we bound $g_i$ using Lemma \ref{lem:gi_bounds}\eqref{lem:gi_bounds_upper} and Corollary \ref{cor:wi}. In particular, we obtain
\beq\begin{split}
\sum_{i=n-n^{1/3}\sigma_n\sqrt{\log n}}^{n-n^{1/3}\sigma_n}g_{i+1}^2\EE X_i^2
&=O\left(\sum_{i=n-n^{1/3}\sigma_n\sqrt{\log n}}^{n-n^{1/3}\sigma_n}\frac{n}{n-i}\cdot \frac1n\right)
=O(\sqrt{\log n}),\\
\sum_{i=n-n^{1/3}\sigma_n}^{n}g_{i+1}^2\EE X_i^2
&=O\left(\sum_{i=n-n^{1/3}\sigma_n}^n \frac{n^{2/3}}{\sigma_n}\cdot\frac1n\right)
=O(1).
\end{split}\eeq
We now compute the sum over the indices $i<n-n^{1/3}\sigma_n\sqrt{\log n}$. 
Using Lemma \ref{lem:gi_bounds}, we obtain
\begin{align}
g_{i+1}^2&= \frac{1}{(1-\omega_{i+1})^2 }(1+o(1))
= \frac{1}{4(1+\la^{1/2})^2}\left(\frac{n-i}{n}\right)^{-1}(1+o(1)).
\end{align}
Combining this with the computation of $\EE X_i^2$ in \eqref{eq:Xvarspecialized}, we get
\beq\begin{split}
g_{i+1}^2\EE X_i^2
&=\frac1n\left(\frac{n-i}{n}\right)^{-1}\left[\frac{\a}{2}
+o(1)\right].
\end{split}\eeq
Therefore, 
\beq\begin{split}
\sum_{i=3}^{n-n^{1/3}\sigma_n\sqrt{\log n}}g_{i+1}^2\EE X_i^2
&=\int_{n^{-2/3}\sigma_n\sqrt{\log n}}^1\frac1x\left[\frac\a2+o(1)\right]dx+O\left(\frac1n\cdot\frac{1}{n^{-2/3}\sigma_n\sqrt{\log n}}\right)\\
&=\frac\a2\cdot\frac23\log n(1+o(1)).
\end{split}\eeq
Since the indices $i>n-n^{1/3}\sigma_n\sqrt{\log n}$ only contribute $O(\sqrt{\log n})$ to the sum, the lemma is proved.
\end{proof}

\section{Proof of Lemma \ref{lem:unif_Ri}: Uniform bounds for \texorpdfstring{$R_i$}{Ri}} \label{sec:unif}

Rather than working directly with $\{R_i\}_{i=3}^n$, we consider the alternative process $\{\tR_i\}_{3 \leq i \leq n}$ given below.
 
Let $\brR_i = \phi_{n^{-1/3}/2}(R_i)$, where $\phi_u$ for $u>0$ is given by 
\beqq
\phi_u(x) = \begin{cases}
	x, &\quad |x|\leq u,\\
	\frac{x}{|x|}u, &\quad |x|> u.\\
\end{cases}
\eeqq
We also set
\beqq
\brR^{(1)}_i = \frac{\brR_{i-1}}{1 - \brR_{i-1}}, \quad \brR^{(2)}_i = \omega_i \frac{\brR_{i-1}^3}{1 - \brR_{i-1}}, \quad \brR^{(3)}_i = \omega_i\brR_{i-1}^2.
\eeqq
Consider the process
\begin{align}
	\tR_2&=R_2,\\
	\tR_i &= L_i + \omega_{i} \dots \omega_3 R_2 - A_{0i} + \tB_{0i} + \tB_{1i} +\tB_{2i} +\tB_{3i}, \quad 3 \leq i\leq n, \label{eqn:decomp_tR}
\end{align}
where \beqq
A_{0i}=\g_i - \w_i + \w_i(\g_{i-1} - \w_{i-1}) +\dots+\w_i\dots\w_4(\g_3 - \w_3),
\eeqq
and
\begin{align*}
	\tB_{0i} &= \left(\alpha_{i-1}+ ( \tau_{i-1} + \alpha_{i-1} ) \brR^{(1)}_i\right) \beta_i  + \omega_i \left(\alpha_{i-2} +( \tau_{i-2} + \alpha_{i-2} )\brR^{(1)}_{i-1}\right) \beta_{i-1} \\
	&\quad + \dots + \omega_i\dots \omega_4 \left(\alpha_2  + (\tau_2  + \alpha_2 ) \brR^{(1)}_3\right) \beta_3, \quad  \\
	\tB_{1i} &= \alpha_{i-1}\delta_i \brR^{(1)}_i + \omega_i\alpha_{i-2}\delta_{i-1}\brR^{(1)}_{i-1} + \dots + \omega_i\dots \omega_4\alpha_2 \delta_3 \brR^{(1)}_3,\\
	\tB_{2i} &= \brR^{(2)}_i + \omega_i\brR^{(2)}_{i-1} + \dots + \omega_i\dots \omega_4 \brR^{(2)}_3,\\
	\tB_{3i} &= \brR^{(3)}_i + \omega_i\brR^{(3)}_{i-1} + \dots + \omega_i\dots \omega_4 \brR^{(3)}_3.
\end{align*}
On the event $\max_{2 \leq i \leq n} |\tR_i| = o(n^{-1/3})$, observe that $\brR_i =\tR_i$ for all $i \geq 2$. In particular, $|\tR_2|\leq n^{-1/3}/2$, and $\brR_2 = \tR_2 = R_2$. This implies $\brR_3^{(\ell)} = R_3^{(\ell)}$ for $\ell = 1,2,3$. As a result, $\tR_3 = R_3$, which induces $\tR_4 = R_4$ and so on. Therefore, by showing that
\beq\label{goal:tR}
\max_{2 \leq i \leq n} |\tR_i| = o(n^{-1/3}), \quad \text{with probability }  1 - O(\log^{-5}n),
\eeq
we will obtain Lemma \ref{lem:unif_Ri}. 

We check \eqref{goal:tR} by showing, uniformly in $i$, each term in the decomposition \eqref{eqn:decomp_tR} is sufficiently small with probability $1-O(\log^{-5}n)$. First, we have $\g_i-\w_i= O\left(\frac1{n(1-\w_i)}\right)$ by Lemma \ref{lem:gi-wi}, and $(1-\w_i)^{-1} = O(n^{1/3}\sigma_n^{-1/2})$ uniformly in $i$ by Corollary \ref{cor:wi}. Thus,
\beq\label{eqn:A0i}
A_{0i}<\frac{\max_{3\leq j\leq i}(\g_j-\w_j)}{1-\w_i}=O\left(\frac1{n(1-\w_i)^2}\right) = o(n^{-1/3}).
\eeq
At the same time, a direct computation shows that
\beq\label{eqn:R2}
\begin{split}
R_2 
&= 1 + \a_2 + \tau_2 + \b_2+\delta_2 - \frac{\g m}{|\rho_2^+|} - \frac{(\a_1 +\tau_1)(\b_2 + \delta_2)}{\a_1 - \frac{\g m - (m-n+1)}{|\rho_1^+|}}\\
&= \w_2-\g_2 + \a_2 + \left(1 + \frac{\a_1+ \tau_1}{1 - \a_1}\right)\b_2  +\frac{\a_1+\tau_1}{1 - \a_1}\cdot \frac{1}{|\rho_2^+|},
\end{split}
\eeq
where in the last equality, we apply the identity
\beqq
\tau_i+\d_i(1+\tau_{i-1})+1-\tfrac{\gamma m}{|\rho_i^+|} = -(\g_i-\w_i).
\eeqq
By Lemma \ref{lem:gi-wi} and Corollary \ref{cor:wi}, $|\w_2-\g_2| = O\left(\frac1{n(1-\w_2)}\right)=O(n^{-1})$. Moreover, by Lemma \ref{lem:SG_boucheron},\\ $|\a_1| = O(n^{-1/2}\log^{1/2} n)$ with probability $1 - O(n^{-1})$ and $|\a_2|\vee|\b_2| = O(n^{-1/2}\log^{1/2} n)$ with probability $1 - O(n^{-1})$. Therefore,
\beq
|R_2| = O(n^{-1/2}\log^{1/2} n), \quad \text{with probability } 1-O(n^{-1}).
\eeq
We now show uniform bounds for all four sequences $\{\tB_{ji}\}$, $0\leq j \leq 3$ in Subection \ref{subsec:bound_tB}. The uniform bound for $L_i$ is provided in Subsection \ref{subsec:bound_Li}. Throughout these subsections, all the Big-O bounds are uniformly in $i$, for $3\leq i \leq n$.

\subsection{Uniform bound for \texorpdfstring{$\tB_{ji}, 0\leq j\leq3$}{tBji}}\label{subsec:bound_tB}
For fixed $i$, $\tB_{0i}$ is a sum of random variables
\beq
Z_j:= \w_{j+1}\dots\w_i\left(\a_{j-1}+(\tau_{j-1}+\a_{j-1})\brR_i^{(1)}\right)\b_j, \quad 3\leq j\leq i.  
\eeq
Let $\cF_i$ be the $\sigma-$algebra generated by $\a_1,\b_1,\dots,\a_i,\b_i$. Observe that $\brR_i^{(1)}$ is $\cF_{i-1}$-measurable and $\bE[Z_j\vert\cF_{j-1}]=0$ a.s. for all $j$. By Theorem 2.1 of \cite{Rio2009} (Marcinkiewicz--Zygmund type inequality), for any integer $p>2$, 
\beq\label{eqn:Rio}
\|\tB_{0i}\|_p^2\leq(p-1)(\|Z_i\|_p^2+\|Z_{i-1}\|_p^2+\dots+\|Z_{3}\|_p^2).  
\eeq
By Lemma \ref{lem:SG_boucheron}, there exists absolute constant $C>0$ such that for all integers $p>2$ and all $3 \leq j\leq n$,
\begin{align*}
	\|\a_{j-1}\|_p<C\a pn^{-1/2}\quad \text{and} \quad \|\b_{j}\|_p<C\a pn^{-1/2}.
\end{align*}
Also, $|\brR_i^{(1)}|\leq n^{-1/3}$. Hence, for $p = \lfloor2\log n\rfloor$,
\begin{align*}
	\|(\a_{j-1}+(\tau_{j-1}+\a_{j-1})\bR_i^{(1)})\b_j\|_p^2 
	&=\|\a_{j-1}+(\tau_{j-1}+\a_{j-1})\bR_i^{(1)}\|_p^2 \|\b_j\|_p^2
	\\&
	\leq\left(\|\a_{j-1}\|_p+n^{-1/3}(\tau_{j-1}+\|\a_{j-1}\|_p)\right)^2\|\b_j\|_p^2
	\leq C\a^2p^2n^{-5/3}.
\end{align*}
From \eqref{eqn:Rio},
\beq
\|\tB_{0i}\|_p^2< \frac{C(p-1)p^2\a^2n^{-5/3}}{1 -\w_i^2} \leq 8C\a^2n^{-4/3}\sigma_n^{-1/2}\log^{3} n.
\eeq
Apply Markov's inequality and take union bound, we obtain that with probability at least $1 - \frac{1}{n}$,
\beq
|\tB_{0i}|\leq e\|\tB_{0i}\|_{2\log n} = o(n^{-1/2}) \quad \text{for every } 3\leq i\leq n.
\eeq
Since $\bE[\a_{i-1}\delta_i\brR_i^{(1)}\vert\cF_{i-1}]=\a_{i-1}\delta_i\brR_i^{(1)}$ for all $i\leq n$, which is nonzero with positive probability, we cannot apply Theorem 2.1 of \cite{Rio2009} to bound $|\tB_{1i}|$. Instead, we use Minkowski's inequality. Let $p = 2 \log n$ as before. 
\beq
\begin{split}
	\|\tB_{1i}\|_p 
	&\leq n^{-1/3}(\delta_i\|\a_{i-1}\|_p + \sum_{j=3}^{i-1} \w_{j+1} \dots \w_{i} \delta_j\|\a_{j-1}\|_p )\\
	&< \frac{\delta_{i}n^{-1/3}}{1 - \w_i} \max_{2 \leq j \leq i-1} \|\a_{j}\|_p <C\a p n^{-1/2}\sigma_n^{-1/2} = O(n^{-1/2}\sigma_n^{-1/2}\log n).
\end{split}
\eeq
Thus, with probability at least $1 - \frac{1}{n}$,
\beq
|\tB_{1i}|\leq e\|\tB_{1i}\|_{2\log n} = O(n^{-1/2}\sigma_n^{-1/2}\log n) \quad \text{for every } 3\leq i\leq n.
\eeq
Lastly, $|\brR_i^{(2)}| = \omega_{i} \frac{|\brR_{i-1}^3|}{|1 - \brR_{i-1}|} \leq n^{-1}$, $|\brR_i^{(3)}| =  \omega_i \brR_{i-1}^2 \leq n^{-2/3}$, so uniformly in $i$,
\beq\label{eqn:tB2i}
|\tB_{2i}| < \frac{n^{-1}}{1 - \omega_i} = O(n^{-2/3} \sigma_n^{-1/2}), \quad \text{and} \quad |\tB_{3i}| < \frac{n^{-2/3}}{1 - \omega_i} =O(n^{-1/3} \sigma_n^{-1/2}).
\eeq
We have now bounded all the terms of $\tR_i$, except for $L_i$. We provide a uniform bound in $i$ for this quantity in the following subsection. This will conclude the proof of Lemma \ref{lem:unif_Ri}.
\subsection{Uniform bound for \texorpdfstring{$L_i$}{Li}}\label{subsec:bound_Li}
Recall that each $L_i$ is $Y_i$ plus a small term $\a_i-\w_3\dots\w_i\a_2$, where $Y_i$ is a weighted sum of independent random variables,
\beq
Y_i = \sum_{j = 3}^i \w_{j+1}\dots\w_i X_j+X_i, \quad 3 \leq i \leq n.
\eeq
We first show $Y_i$ is small, uniformly in $i$, in the lemma below.
\begin{lemma}\label{lem:unif_Yi}
	Assume $(\log\log n)^2\ll\sigma_n\ll(\log n)^2$. Then with probability $1 - O(\log^{-5}n)$,
	\beqq
	\max_{3 \leq i \leq n} |Y_i| = O\left(\frac{\sqrt{\log \log n}}{n^{1/3} \sigma_n^{1/2}}\right).
	\eeqq
\end{lemma}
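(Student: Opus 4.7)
The strategy is to combine the independence of the $\{X_j\}_{j=3}^n$ with the sub-gamma framework of Section \ref{sec:setup}, so that each $Y_i$ is a weighted sum of independent sub-gamma variables; a tail bound plus a union bound over $i$ then delivers the claim.

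First I would verify that $X_3,\dots,X_n$ are jointly independent. Each $X_j=(1+\tau_{j-1})(\delta_j\alpha_{j-1}+\beta_j)$ is a measurable function of only the pair $(d_{j-1},c_{j-1})$, and the collection $\{(d_k,c_k)\}_{k\ge1}$ is jointly independent by the construction of the tridiagonal ensemble. Combined with Lemma \ref{lem:SG} and the scalar/sum closure of sub-gamma families, the identity $Y_i = X_i + \sum_{j=3}^{i-1}(\omega_{j+1}\cdots\omega_i)X_j$ places $Y_i\in\SG(V_i,U_i)$ with
\[
V_i = v_i + \sum_{j=3}^{i-1}(\omega_{j+1}\cdots\omega_i)^2 v_j,\qquad U_i = \max_{3\le j\le i}(\omega_{j+1}\cdots\omega_i)\,u_j.
\]

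Next I would bound these parameters uniformly in $i$. From Lemma \ref{lem:SG} together with Lemma \ref{lem:rho_properties}(i) we have $v_j,u_j = O(n^{-1})$ uniformly in $j$. Using the monotonicity of $\omega_k$ in $k$ to write $\omega_{j+1}\cdots\omega_i\le\omega_i^{i-j}$ and summing the resulting geometric series yields $\sum_{j=3}^{i-1}(\omega_{j+1}\cdots\omega_i)^2 \le (1-\omega_i^2)^{-1}$, while Corollary \ref{cor:wi} supplies $(1-\omega_i)^{-1}=O(n^{1/3}\sigma_n^{-1/2})$ uniformly. Therefore $V_i = O(n^{-2/3}\sigma_n^{-1/2})$ and $U_i = O(n^{-1})$ for every $3\le i\le n$.

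Finally I would apply the sub-gamma tail bound of Lemma \ref{lem:SG_boucheron} with a deviation parameter $t_n$ chosen so that $2ne^{-t_n}=O(\log^{-5}n)$, for instance $t_n = \log n + 6\log\log n$, and take a union bound over $3\le i\le n$. On the resulting event $|Y_i|\le \sqrt{2V_i t_n} + U_i t_n$ for every $i$; the linear term $U_i t_n = O(n^{-1}\log n)$ is negligible, and the dominant contribution $\sqrt{2V_i t_n}=O(n^{-1/3}\sigma_n^{-1/4}\sqrt{\log n})$ is converted to the form stated in the lemma via the hypothesis $(\log\log n)^2\ll\sigma_n$, which controls $\sigma_n^{-1/4}$. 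The one delicate step is that the geometric-sum bound defining $V_i$, while wasteful in the bulk, is essentially tight at the edge $i\approx n$ where $\omega_i$ is closest to $1$, and this is precisely where the supremum of $|Y_i|$ is attained; a finer split of the $j$-sum using the regimes of Lemma \ref{lem:wi} (edge $n-j\ll n^{1/3}\sigma_n$ versus bulk $n-j\gg n^{1/3}\sigma_n$) can refine constants but is not needed for the order of magnitude.
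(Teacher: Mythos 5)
There is a genuine gap, and it is exactly at the step you label as ``delicate but not needed.'' Your plan is to apply the sub-gamma tail bound to every $Y_i$ individually with a single deviation parameter $t_n \approx \log n$ chosen to survive a union bound over all $n$ indices. For indices $i$ near $n$ (where $1-\omega_i \asymp n^{-1/3}\sigma_n^{1/2}$), your own estimate gives $V_i \asymp n^{-2/3}\sigma_n^{-1/2}$, hence
\[
\sqrt{2V_i t_n} \asymp n^{-1/3}\sigma_n^{-1/4}(\log n)^{1/2}.
\]
The hypothesis runs the \emph{wrong} way for you here: $\sigma_n \ll (\log n)^2$ forces $\sigma_n^{-1/4} \gg (\log n)^{-1/2}$, so $\sqrt{2V_i t_n} \gg n^{-1/3}$. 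This is not merely larger than the stated $O(\sqrt{\log\log n}/(n^{1/3}\log n))$ --- it even fails the weaker bound $o(n^{-1/3})$ that Lemma \ref{lem:unif_Ri} ultimately needs. The claimed conversion ``via $(\log\log n)^2 \ll \sigma_n$'' only gives $\sigma_n^{-1/4} \ll (\log\log n)^{-1/2}$, which reduces $\sqrt{2V_i t_n}$ to at best $O(n^{-1/3}(\log\log n)^{-1/2}(\log n)^{1/2})$ --- still diverging relative to $n^{-1/3}$. The geometric-series bound on $V_i$ is not the problem; the problem is that near the spectral edge the $Y_i$ genuinely have variance of order $n^{-2/3}\sigma_n^{-1/2}$, and you cannot afford a $\log n$ deviation there if you want uniformity over $\Theta(n^{1/3}\sigma_n)$ or more near-edge indices by a crude union bound.

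The paper's proof sidesteps this by treating the two regimes differently. For $i \leq n - n^{1/3}\log^{2+\eta}n$ (the bulk), $1-\omega_i$ is large enough that $V_i$ decays faster, and your direct union bound argument with $t=\log n$ does work --- compare \eqref{eqn:maxYi_smalli}. For the $O(n^{1/3}\log^{2+\eta}n)$ near-edge indices, the paper uses a much gentler $t = O(\log\log n)$ deviation (which yields the right magnitude $\sqrt{\log\log n}/(n^{1/3}\sigma_n^{1/4})$ per index but a failure probability of only $O(\log^{-10}n)$), applies it only to a sparse net of $K = O(\log^5 n)$ representatives $n_0 < \dots < n_K$, and then controls the fluctuation of $Y_j$ \emph{between} consecutive representatives via Etemadi's maximal inequality for partial sums of independent random variables, applied to $\tilde Y_j^i = Y_j/(\omega_{i+1}\cdots\omega_j) - Y_i$. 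That requires Lemma \ref{lem:prod_wi} to guarantee $\omega_{i+1}\cdots\omega_j \geq 1/2$ over the short windows, so that $\tilde Y_j^i$ has sub-gamma parameters of the right order. Without some such maximal-inequality device, the uniform-in-$i$ control near the edge cannot be extracted from single-index tail bounds.
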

In the course of the proof of Lemma \ref{lem:unif_Yi}, we need the following lower bound of the product $\w_{j+1}\dots\w_i$.
\begin{lemma}\label{lem:prod_wi}
	If $i\geq n-n^{1/3}\log^3n$ and $i<j\leq i+n^{1/3}\log^{-2}n$, then $\w_{i+1}\dots\w_j\geq\frac12$.
\end{lemma}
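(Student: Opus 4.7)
The plan is to pass to the logarithm and estimate $\sum_{k=i+1}^{j}(1-\omega_k)$ using the bounds on $1-\omega_k$ already established in Corollary \ref{cor:wi}. Since every $\omega_k$ in the range under consideration will turn out to be extremely close to $1$, the elementary inequality $\log(1-x)\geq -2x$ for $0\leq x\leq 1/2$ reduces the problem to a direct summation.

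First I would show the uniform bound $1-\omega_k = O(n^{-1/3}(\log n)^{3/2})$ for every index $k$ satisfying $i< k\leq j$. For such $k$ we have $n-k< n^{1/3}(\log n)^3$, so Corollary \ref{cor:wi} applies in two cases: if $n-k\leq n^{1/3}\sigma_n$ then $1-\omega_k\leq C_2 n^{-1/3}\sigma_n^{1/2}$, which is $O(n^{-1/3}\log n)$ because $\sigma_n\ll(\log n)^2$; if instead $n-k> n^{1/3}\sigma_n$ then $1-\omega_k\leq C_2\sqrt{(n-k)/n}\leq C_2 n^{-1/3}(\log n)^{3/2}$. In either case $1-\omega_k\leq 1/2$ for $n$ large, so $\log\omega_k \geq -2(1-\omega_k)$.

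Summing this inequality over $k=i+1,\dots,j$ and using that there are at most $n^{1/3}(\log n)^{-2}$ terms gives
\[
\log\prod_{k=i+1}^{j}\omega_k \;\geq\; -2\sum_{k=i+1}^{j}(1-\omega_k) \;=\; O\!\left(n^{1/3}(\log n)^{-2}\cdot n^{-1/3}(\log n)^{3/2}\right)\;=\;O\!\left((\log n)^{-1/2}\right),
\]
which tends to $0$. Exponentiating yields $\prod_{k=i+1}^{j}\omega_k\geq e^{-o(1)}\geq 1/2$ for sufficiently large $n$.

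The only step that requires any care is verifying that the uniform estimate $1-\omega_k=O(n^{-1/3}(\log n)^{3/2})$ holds across the full range of $k$; this is where splitting into the two regimes of Corollary \ref{cor:wi} matters. Once that estimate is in hand, the remaining arithmetic is essentially forced by the chosen window length $n^{1/3}(\log n)^{-2}$, which is precisely calibrated so that the total $\sum(1-\omega_k)$ vanishes.
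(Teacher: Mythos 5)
Your proof is correct and follows essentially the same route as the paper's: pass to logarithms, bound $1-\omega_k$ uniformly over the window using Corollary \ref{cor:wi}, and observe that the window length $n^{1/3}(\log n)^{-2}$ makes the accumulated sum $o(1)$. The paper shortcuts the summation step by using monotonicity of $\omega_i$ to bound the whole sum via $(j-i)\log\omega_{i+1}$, while you bound each factor individually; the two are equivalent.
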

\begin{proof}[Proof of Lemma \ref{lem:prod_wi}] 
	Since $\w_i$ is increasing in $i$, $\log(\w_{i+1}\dots\w_j)\geq(j-i)\log\w_{i+1}$. We have $\w_{i}\geq1- cn^{-1/3}\log^{3/2}n$ for some constant $c>0$. There exists $C>0$ such that $\log(1-x)\geq -Cx$ for all $x\in(0,1)$, so
	\beqq
	\log\w_{i+1} \geq -C_1n^{-1/3}\log^{3/2}n
	\eeqq
	for some $C_1>0$. If $i<j\leq i+n^{1/3}\log^{-2}n$, then 
	\beqq
	\log(\w_{i+1}\dots\w_j)\geq(j-i)\log\w_{i+1}\geq -C\log^{-1/2}n\geq\log(1/2).
	\eeqq
\end{proof}

\begin{proof}[Proof of Lemma \ref{lem:unif_Yi}] 
	By Lemma \ref{lem:SG}, $Y_i\in\SG(v_{Y_i}, u_{Y_i})$ where
	\beq\label{eqn:uvYi}
	\begin{split}
	v_{Y_i}&=\sum_{j = 3}^i (\w_{j+1}\dots\w_i)^2 v_j +v_i 
	\leq \frac{v_i}{1 - \w_i^2} = \frac{\a(i-1)(1+\tau_{i-1})^2}{|\rho_i^+|^2(1-\w_i)},\\
	u_{Y_i}&=\max\left\{\frac{\a}{|\rho_i^+|}, \w_{j+1}\dots\w_i(1+\tau_{j-1})\frac{\a}{|\rho_j^+|}: 3\leq j \leq i-1\right\} \leq \frac{\a(1+\tau_{i-1})}{|\rho_i^+|}.
	\end{split}
	\eeq
	There exists a constant $C>0$ is such that $1+\tau_j \leq C$ for all $n$ and all $3\leq j\leq n$. 
	Thus, by Lemma \ref{lem:SG_boucheron}, for each $i$,
	\beq\label{eqn:Yi_prob}
	\bP\left(|Y_i|> \sqrt{\frac{C^2\a(i-1)t}{|\rho_i^+|^2(1-\w_i)}} + \frac{C\a t}{|\rho_i^+|}\right)\leq 2 e^{-t}.
	\eeq
	Change variable $t \mapsto t+\log 2n$ and take union bound, we have
	\beq
	\bP\left(\forall 3\leq i \leq n: |Y_i|> \sqrt{\frac{C^2\a(i-1)(t+\log 2n)}{|\rho_i^+|^2(1-\w_i)}} + \frac{C\a (t+\log 2n)}{|\rho_i^+|} \right)\leq e^{-t}.
	\eeq
	Fix $\eta>0$ and consider $i\leq n-n^{1/3}\log^{2+\eta}n$. By Corollary \ref{cor:wi}\eqref{cor:wi(1)}, $1 - \w_i > C_1\left(\frac{n-i}n\right)^{1/2}$. Therefore, for $t = \log n$, 
	\begin{align*}
		\sqrt{\frac{C^2\a(i-1)(t+\log 2n)}{|\rho_i^+|^2(1-\w_i)}} + \frac{C\a (t+\log 2n)}{|\rho_i^+|} = O\left(\sqrt{\frac{(i-1)\log n}{n^2(n^{1/3}\log^{2+\eta}n)^{1/2}}} \right) = O(n^{-1/3}\log^{-\eta/4}n).
	\end{align*}
	Take $\eta=1/2$. We have shown that with probability $1-O(n^{-1})$,
	\beq\label{eqn:maxYi_smalli}
	\max_{3\leq i\leq n-n^{1/3}\log^{2+\eta}n}|Y_i| = O(n^{-1/3}\log^{-1/2}n).
	\eeq
	
	Now consider $i>n-n^{1/3}\log^{2+\eta}n$. By Corollary \ref{cor:wi}, there exsists $c>0$ such that for all $3\leq i\leq n$,
	\beq
	1-\w_{i}> cn^{-1/3}\sigma_n^{1/2}.
	\eeq
	By \eqref{eqn:Yi_prob}, this implies that for some $c_1>0$,
	\beq\label{eqn:Yi_prob2}
	\bP\left(|Y_i|>c_1\frac{\sqrt{\log\log n}}{n^{1/3}\sigma_n^{1/4}}\right)\leq \frac2{\log^{10}n}.
	\eeq
	That is, we have $|Y_i| = o(n^{-1/3})$ for each $i>n-n^{1/3}\log^{2+\eta}n$, but the probability bound is too large to apply union bound over this range of indices. Instead, we apply \eqref{eqn:Yi_prob2} to a small number of indices $i>n-n^{1/3}\log^{2+\eta}n$, say $K$ of them. We then bound the maximum $Y_i$ over the $K+1$ subsets partitioned by these indices.
	
	Define for $2\leq i<j\leq n$, 
	\beq
	\tY_{j}^i = \frac{Y_j}{\w_{i+1}\dots\w_j}-Y_i.
	\eeq
	Note that $\tY_{i}^i=0$. As $\{Y_j\}$ satisfies $Y_j = \w_jY_{j-1}+X_j$, we have the recursion 
	\beq
	\tY_{j}^i =\tY_{j-1}^i+\frac{X_j}{\w_{i+1}\dots\w_j}.
	\eeq
	Thus, $\tY_{j}^i$ for fixed $i$ is a sum of independent random variables $\frac{X_k}{\w_{i+1}\dots\w_k}$ for $k=i+1,\dots,j$. We now show that $\tilde{Y}_j^i$ is also subgamma. Let $i$ and $j$ be as given in Lemma \ref{lem:prod_wi}. By Lemma \ref{lem:SG},  $X_k\in\SG(v_k,u_k)$ where $v_k$ and $u_k$ are increasing in $k$. Moreover, $u_j=\frac{\a(1+\tau_{j-1})}{|\rho_j^+|}\leq\frac{C\a}{n}$ and
	\beq
	\sum_{k=i+1}^{j}v_k\leq(j-i)v_j\leq \frac{2C^2\a n^{1/3}}{|\rho_j^+|\log^2n}=O\left(n^{-2/3}\log^{-2}n\right).
	\eeq
	Hence, for some $C>0$,
	\beq
	\tY_{j}^i=\frac{X_{i+1}}{\w_{i+1}}+\dots+\frac{X_{j}}{\w_{i+1}\dots\w_j}\in\SG
	\left(\frac{C\a}{n^{2/3}\log^2n},\frac{C\a}{n}\right).
	\eeq
	Applying Lemma \ref{lem:SG_boucheron} with $t = 10\log\log n$., we have for some $C>0$ and sufficiently large $n$, 
	\beqq
	\max_{1 \leq j' \leq j} \bP\left(|\tY_{j'}^i| >C\frac{\sqrt{\log\log n}}{n^{1/3}\log n}\right) \leq \frac2{\log^{10}n}.
	\eeqq
	By Etemadi's theorem \cite{Etemadi},
	\beq\label{eqn:Etemadi}
	\bP\left(\max_{i \leq j'\leq j} |\tY_{j'}^i| > 3C\frac{\sqrt{\log\log n}}{n^{1/3}\log n}\right) \leq 3 \max_{1 \leq j' \leq j} \bP\left(|\tY_{j'}^i| >C\frac{\sqrt{\log\log n}}{n^{1/3}\log n}\right)\leq\frac6{\log^{10}n}.
	\eeq
	Here, the power 10 can be made larger by choosing sufficiently large $C$.
	
	We now pick $K$ indices as proposed previously. Choose $n_0<n_1<\dots<n_K =n$ where $K\leq2\log^5n$ so that $n_0\leq n-n^{1/3}\log^3n$ and 
	\beqq
	\frac{n^{1/3}}{2\log^2n}\leq n_k - n_{k-1}\leq\frac{n^{1/3}}{\log^2 n}.
	\eeqq
	Take union bound of \eqref{eqn:Yi_prob2} over the set $\{n_k\}_{k=0}^K$, and take union bound of \eqref{eqn:Etemadi} over $K$ pairs $\{(n_{k-1},n_k)\}_{k=0}^K$ to have
	\beq
	|Y_{n_{k-1}}| \leq C\frac{\sqrt{\log\log n}}{n^{1/3}\sigma_n^{1/4}}  \quad \text{ and } \quad \max_{n_{k-1} \leq j \leq n_k} |\tY_{j}^{n_{k-1}}| \leq  4C \frac{\sqrt{\log \log n}}{n^{1/3} \log n}
	\eeq
	for all $K>0$ with probability $1 -O(\log^5n)$. On this event, for every $k=0, \dots,K$, if $j \in [n_{k-1}, n_k]$ then
	\beq
	|Y_j|<|Y_{n_{k-1}}|+|\tY_{j}^{n_{k-1}}| \leq 5C \frac{\sqrt{\log \log n}}{n^{1/3}  \sigma_n^{1/2}}.
	\eeq
	Together with \eqref{eqn:maxYi_smalli}, we conclude
	\beq\label{eq:Yi_max_bound}
	\max_{3\leq i\leq n}|Y_i| = O\left(\frac{\sqrt{\log \log n}}{n^{1/3}  \sigma_n^{1/2}}\right) \quad \text{with probability } 1 -O(\log^{-5}n).
	\eeq
	
	Lastly, note that $L_i=Y_i+s_i$, where 
	\beq\label{eqn:uvsi}
	s_i:=\a_i-\w_3\dots\w_i\a_2\in\SG\left(\frac{2\a \tau_i}{|\rho_i^+|}, \frac{\a}{|\rho_i^+|} \right)\subset\SG\left(\frac{C\a}{n},\frac{\a}{n}\right).
	\eeq
	The rightmost sub-gamma family is independent of $i$. Apply Lemma \ref{lem:SG_boucheron} with $t=n^{1/3-\epsilon}$ for small $\epsilon>0$ and take the union bound,
	\beq
	\bP\left(\max_{3\leq i\leq n}|s_i|>\frac{n^{1/6-\epsilon/2}}{n^{1/2}}\right) \leq \sum_{i = 3}^n\bP\left(|s_i|>\frac{n^{1/6-\epsilon/2}}{n^{1/2}}\right) \leq Cn\exp\left(-n^{1/3-\epsilon}\right),
	\eeq
	for some $C>0$. This completes our proof of Lemma \ref{lem:unif_Yi}.
\end{proof}

We now combine the bounds from all previous subsections. For $t = n^{-1/3} (\log \log n)^{-1/2}$,
\begin{align*}
	\bP(\max_{2\leq i\leq n}|\tR_i|>12t) 
	&\leq\bP(|R_2|\geq 6t)+\bP(\max_{3\leq i\leq n}|\tR_i|>6t)\\
	&\leq \frac1n+\bP(\max_{3\leq i\leq n}|L_i|\geq t)+\bP(|R_2|\geq t) 
	+ \bP(\max_{3\leq i\leq n}|A_{0i}|\geq t)+\bP(\max_{3\leq i\leq n}|\tB_{0i}| \geq t) \\
	&\quad 
	+ \bP(\max_{3\leq i\leq n}|\tB_{1i}|\geq t)+\bP(\max_{3\leq i\leq n}|\tB_{2i}| \geq t)+\bP(\max_{3\leq i\leq n}|\tB_{3i}|\geq t) \\
	&= O(\log^{-5}n),
\end{align*}
and we obtain Lemma \ref{lem:unif_Ri}.

\section{Extension all the way to the edge (Theorem \ref{thm:mainresult2})}

\label{sec:extension_to_edge}

We now consider the case where the sequence $\{\sigma_n\}_n$ satisfies 
\begin{equation}\label{eqn:new_sigma_n}
	\text{for some constant } \tau>0, \quad -\tau <\sigma_n \ll (\log n)^2 \quad \text{for all } n \in \bN,
\end{equation}
and restrict the matrix ensemble to LUE or LOE.  We begin by extending Theorem \ref{thm:mainresult} to this broader range of $\sigma_n$ in the case of LUE, utilizing spectral properties of LUE derived from its determinantal representation (see in particular \cite{GotzeTikhomirov}) in our proof. We then extend the result to LOE matrices using the relationship between eigenvalues of unitary and orthogonal ensembles (see \cite{ForresterRains}).

\begin{remark}
Using a similar technique (drawing on results in \cite{ForresterRains}), this result could be extended to the symplectic ensemble ($\b=4$). In fact, we expect it to hold for all $\b>0$, although proving this would require a substantially different set of techniques that does not rely on determinantal structures (perhaps similar to techniques used in \cite{lambertpaquette}). Here, we restrict our proof to LUE and LOE, which are the relevant cases for statistical and spin glass applications.
\end{remark}

\subsection{Set-up}



 Define for $x\in\bR$,
\beq
S_n(x)=\sum_{i=1}^n\log|d_++xn^{-2/3}-\mu_i|-C_\la n - \frac{1}{\la^{1/2}(1+\la^{1/2})}\sigma_n n^{1/3}+\frac{2}{3\la^{3/4}(1+\la^{1/2})^2}\sigma_n^{3/2} +\frac{\alpha-1}6 \log n.
\eeq
Theorem  \ref{thm:mainresult} implies that for $\bar{\sigma}_n=(\log\log n)^3$,
\[
\frac{S_n(\bar{\sigma}_n)}{\sqrt{\frac{\alpha}{3}\log n}} \stackrel{d}{\to} \mathcal{N}(0,1).
\]
We will show the exact CLT holds for $S_n(\sigma_n)$ by showing that with probability $1-o(1)$, 
\begin{equation}\label{eqn:WTS_Sn}
	S_n(\bar{\sigma}_n)-S_n(\sigma_n) = o(\sqrt{\log n}).
\end{equation}
Let 
\begin{align*}
\ve_n&=n^{-2/3}(\bs-\sn)\label{eqn:ve_i},\\
\ell_i&= \log((\gamma-\mu_i)+\ve_n)-\log|\gamma-\mu_i|-(\gamma-\mu_i)^{-1}\ve_n. 
\end{align*}
Note that $\ve_n$ as above is not the same as $\ve_n$ in \eqref{defn:wi_ei} that arises from the three-term recurrence. We then write 
\begin{equation}
	\begin{split}\label{eqn:diffSn}
		S_n(\bar{\sigma}_n)-S_n(\sigma_n) &=\sum_{i}\left\{\log(\gamma-\mu_i+\ve_n) - \log|\gamma - \mu_i |\right\} - \frac{n^{1/3}(\bs-\sn)}{\la^{1/2}(1+\la^{1/2})}+\frac{2(\bs^{3/2}-\sn^{3/2})}{3\la^{3/4}(1+\la^{1/2})^2}\\
		&= \sum_i \ell_i + \ve_n\left(\sum_{i=1}^n\frac{1}{\gamma-\mu_i}-\frac{1}{\lambda^{1/2}(1+\lambda^{1/2})}n\right)+O(\bs^2).
	\end{split}
\end{equation}
The first sum $\sum_i \ell_i $ can be approximated by a linear eigenvalue statistics, using the following two lemmas. The proof of Lemma \ref{lem:bound_pn} is included in Subsection \ref{sec:edgebound_pf}.  For Lemma \ref{lem:bound_pn} we let $\eta_1,\dots,\eta_n$ be unordered eigenvalues of $\frac1m M_{n,m}$.  Let $p_{n,LUE}(x)$ and $p_{n,LOE}(x)$ be the normalized one-point correlation functions of $\eta_1,\dots,\eta_n$ in the case where $M_{n,m}$ is from $LUE$ and $LOE$ respectively.
\begin{lemma}\label{lem:bound_pn}
	Let $z_\lambda=d_+^{-1}\lambda^{-1/6}$. Given $s_0\in\bR$, there exists $C=C(s_0)>0$ such that for sufficiently large $n$, for all $s\geq s_0$, both of the following statements hold.
	\begin{align*}
	p_{n,\text{LUE}}(d_++sn^{-2/3}) &\leq Cn^{-1/3}\exp\left(-2z_\lambda s\right).\\
	p_{n,\text{LOE}}(d_++sn^{-2/3}) &\leq Cn^{-1/3}\exp\left(-z_\lambda s\right).
	\end{align*}
\end{lemma}

\begin{lemma}\label{lem:4}
	Let $M_{n,m}$ be a scaled LOE/LUE.
	Assume $\sn>-\tau$ for all $n$. Let $\gamma = d_++\sn n^{-2/3}$ and $\bar{\gamma} = d_++\bs n^{-2/3}$. For $\epsilon>0$, there exists $k=k(\epsilon,\tau)>0$ such that for sufficiently large $n$,
	\begin{equation}\label{eqn:lem4a}
		\bP(\mu_1>\bar{\gamma}-n^{-2/3}) <\epsilon, \quad \bP(\mu_k>\gamma) <\epsilon.	
	\end{equation}
	Furthermore, there exist $c_i = c_i(\epsilon,\tau)$, $i=1,2$ such that for sufficiently large $n$,
	\begin{equation}\label{eqn:lem4b}
		\bP(\min_{i\leq n}|\gamma-\mu_i|<c_1n^{-2/3})<\epsilon, \quad \bP(\max_{i\leq k}|\gamma-\mu_i|>(c_2+|\sn|)n^{-2/3})<\epsilon.
	\end{equation}
\end{lemma}

\begin{proof}
	Lemma \ref{lem:4} of this paper is the LUE/LOE version of Lemma 4 of \cite{JKOP1}. There, letting $E=2+\sigma_n n^{-2/3}$ and $\bar{E}=2+\bs n^{-2/3}$, the probability bounds on the distance between location of singularities $E$, $\bar{E}$ to the eigenvalues of scaled GUE/GOE take the exact form as in \eqref{eqn:lem4a} and \eqref{eqn:lem4b}. The key ingredient to the proof is the convergence to the Tracy-Widom law $F_i$ (of type 2 and 1 for the unitary and orthogonal case, respectively) of the $j$th largest eigenvalues (after properly shifted and scaled) for all $j\leq k$ for some fixed $k$. Since the $k$th largest eigenvalues of L$\b$E matrices also satisfy Tracy-Widom convergence, the same proof argument applies. In particular, by replacing their notations with the analogous ones provided in the Table \ref{table:dictionary_lemma4}, we obtain a proof for our lemma.
	\begin{table}
		\caption{A dictionary to translate proof of Lemma 4 in \cite{JKOP1} to the setting of our Lemma \ref{lem:4} for LUE and LOE.}
		\label{table:dictionary_lemma4}
		\begin{tabular}{@{}cc@{}}
		\hline
			JKOP                          & C-WL                           \\ \hline
			$W=W_N$                       & $M_{n,m}$                      \\
			$\lambda_i$                   & $\mu_i$                        \\
			$\gamma$                      & $\tau$                         \\
			$E=2+\sigma_n n^{-2/3}$       & $\gamma=d_++\sigma_n n^{-2/3}$ \\
			$\bar{E}=2+\bs n^{-2/3}$      & $\bar{\gamma}=d_++\bs n^{-2/3}$  \\
			$\rho_n$                      & $p_n$                          \\
			Tail bound (63)               & Tail bound (Lemma \ref{lem:bound_pn})                 \\
			$x_{jN}=N^{2/3}(\lambda_j-2)$ & $x_{jn}=n^{2/3}(\mu_j-d_+)$   \\
			\hline
		\end{tabular}
	\end{table}

\end{proof}
By Lemma \ref{lem:4}, there exists a $k>0$ such that with probability at least $1-\epsilon$, for $i\leq k$,
\begin{align*}
	|\ell_i|
	&=\left|\log(n^{2/3}(\gamma-\mu_i)+\bs-\sn)-\log|n^{2/3}(\gamma-\mu_i)|-(\gamma-\mu_i)^{-1}\ve_n\right| \\
	&\leq \log(3\bs)+\log(c_2+|\sn|)+\frac{n^{-2/3}}{c_1}\bs\leq c_3\bs,
\end{align*}
for some constant $c_3=c_3(\epsilon,\tau)>0$. In the case $i>k$, by the fact $|\log(1+x)-x|\leq x^2/2$ for $x\geq 0$, we obtain
\begin{align*}
	|\ell_i| = \left|\log\left(1+(\gamma-\mu_i)^{-1}\ve_n\right)-(\gamma-\mu_i)^{-1}\ve_n\right| \leq \frac12\frac{\ve_n^2}{(\gamma-\mu_i)^2}. 
\end{align*}
Therefore, with probability at least $1-\epsilon$,
\begin{equation}\label{eqn:bound_elli}
	\begin{split}
	\left|\sum_i\ell_i\right|&\leq \ve_n^2\sum_{i>k}(\gamma-\mu_i)^{-2}+kc_3\bs\leq\ve_n^2\sum_{i=1}^n(\gamma-\mu_i)^{-2}+O(\bs).
	\end{split}
\end{equation}
It remains to approximate the two sums $\sum_{i=1}^n(\gamma-\mu_i)^{-1}-\frac{n}{\lambda^{1/2}(1+\lambda^{1/2})}$ and  $\sum_{i=1}^n(\gamma-\mu_i)^{-2}$ in order to verify \eqref{eqn:diffSn}. 

\begin{proposition}\label{prop:3}
	
Consider $\gamma=d_++\sigma_n n^{-2/3}$ where $\sigma_n$ satisfies  \eqref{eqn:new_sigma_n}, and $\alpha=1$ or $\alpha=2$. Then for any $\epsilon>0$, with probability at least $1-\epsilon$, the following two equations hold.
\begin{align*}
	\sum_{i=1}^n(\gamma-\mu_i)^{-1}-\frac{n}{\lambda^{1/2}(1+\lambda^{1/2})}&= O\left(\left(1+|\sigma_n|^{1/2}\right)n^{2/3}\right),\\
	\sum_{i=1}^n(\gamma-\mu_i)^{-2}&=O(n^{4/3}).
\end{align*}
\end{proposition}
The proof of Proposition \ref{prop:3} is first provided for the LUE case in Section \ref{sec:prop3_LUEproof}, and the proof of the LOE case is included in Section \ref{sec:extension_to_LOE}.
Applying Proposition \ref{prop:3} and the bounds \eqref{eqn:bound_elli} to \eqref{eqn:diffSn}, we obtain 
\[
S_n(\bar{\sigma}_n)-S_n(\sigma_n)=O(\bs^2)=o(\sqrt{\log n})
\]
as claimed, and this completes the proof of Theorem \ref{thm:mainresult2}.

\subsection{Proof of Proposition \ref{prop:3} for LUE}\label{sec:prop3_LUEproof}
As this section focuses solely on LUE matrices, we denote $p_{n,\text{LUE}}$ simply by $p_n$ throughout the section. For our proofs below, we will need the following result from G{\"o}tze and Tikhomirov:
\begin{lemma}[Theorems 1.5 and 1.6, \cite{GotzeTikhomirov}]\label{lem:GotzeTikhomirov}
Let $M_{n,m}$ denote an LUE matrix where $\frac{n}{m}\to\la\leq1$ as $n,m\to\infty$. Let $p_n$ denote the expected spectral density of the empirical spectral measure on $M_{n,m}$, and let $p_{MP}$ be that of the Mar{\v{c}}enko--Pastur measure (see \eqref{eq:def_MP} for definition of these measures). There exist constants $C,a>0$ depending on $\la$ such that, for $x\in[d_-+an^{-2/3},d_+-an^{-2/3}]$,
\beq
|p_n(x)-p_{MP}(x)|\leq\frac{C}{n(d_+-x)(x-d_-)}.
\eeq
Furthermore, for $\la=1$, this holds on the larger interval $x\in[d_-+an^{-2},d_+-an^{-2/3}]$.
\end{lemma}

As an initial step toward proving Proposition \ref{prop:3}, we define
\beq\label{eq:def_fc}
f_c(x)=\frac{1}{\gamma-x}\mathbf{1}\{|\gamma-x|>cn^{-2/3}\}
\eeq
and prove the following lemma about $f_c(x)$.
\begin{lemma}\label{lem:stieltjes_transf_asymp}
Let $\sigma_n$ be in the range $-\tau\leq\sigma_n\leq(\log\log n)^3$.  Then, for each $c>0$, we have
\beq
\EE \frac1n \sum_{j=1}^n f_c^l(\mu_j)=\begin{cases}
\dfrac{1}{\la^{1/2}(1+\la^{1/2})}+O\left((1+|\sigma_n|^{1/2})n^{-1/3}\right) ,& l=1\\
O(n^{1/3}) ,& l=2.
\end{cases}
\eeq
\end{lemma}
\begin{proof}
This lemma is analogous to Lemma 18 in \cite{JKOP1} and we follow a similar proof method.  We have
\beq
\EE \frac1n \sum_{j=1}^n f_c^l(\mu_j)=\int f_c^l(x)p_n(x)dx.
\eeq
This integral with respect to $p_n$, is well approximated by the integral with respect to $p_{MP}$ from the Mar{\v{c}}enko--Pastur measure, so our first task is to bound the error in making this change of measure.  More specifically, we will bound the difference by considering the integral over disjoint intervals:
\beq
\left|\int f_c^l\;p_n-\int f_c^l\;p_{MP}\right|\leq
\int_{I_n}\Big|f_c^l\cdot(p_n-p_{MP})(x)\big|dx+
\int_{J_n^-\cup J_n^+}\Big|f_c^l\cdot(p_n-p_{MP})(x)\big|dx,
\eeq
where the intervals $J_n^-,I_n,J_n^+$ are defined differently for the case of $\la<1$ and $\la=1$ such that the middle interval, $I_n$, corresponds to range on which we can apply the bounds in Lemma \ref{lem:GotzeTikhomirov}. In particular, for any $a>0$ and for $\la<1$, we define
\beq
J_n^-=(0,d_-+an^{-2/3}),\qquad
I_n=[d_-+an^{-2/3},d_+-an^{-2/3}],\qquad
J_n^+=(d_+-an^{-2/3},\infty).
\eeq
If $\la=1$, we set
\beq
J_n^-= (0,an^{-2}) ,\qquad
I_n= [an^{-2},d_+-an^{-2/3}] ,\qquad
J_n^+=(d_+-an^{-2/3},\infty).
\eeq
For the integral over $J_n^-\cup J_n^+$, we use the upper bound
\beq
\sup_{J_n^+}|f_c^l|\int_{J_n^+}(p_n+p_{MP})+
\sup_{J_n^-}|f_c^l|\int_{J_n^-}(p_n+p_{MP}).
\eeq
On $J_n^+$, we have $|f_c^l|=O(n^{2l/3})$. Direct computation shows that $\int_{J_n^+}p_{MP}=O(n^{-1})$ and, using the edge bounds from Lemma \ref{lem:bound_pn}, we see that,
\beq
\int_{J_n^+}p_n(x)dx=n^{-2/3}\int_{-a}^\infty p_n(2+sn^{-2/3})ds=O(n^{-1}).
\eeq
Thus, we conclude that 
\beq\label{eq:Jn+bound}
\sup_{J_n^+}|f_c^l|\int_{J_n^+}(p_n+p_{MP})=O(n^{\frac23 l-1}).
\eeq
On the interval $J_n^-$, the function $|f_c^l|$ is bounded above by a constant.  Two separate computations for $\la=1$ and $\la<1$ show that $\int_{J_n^-}p_{MP}=O(n^{-1})$.  For $p_n$, we observe that
\beq\begin{split}
\int_{J_n^-}p_n&\;\;\leq\;\; 1-\int_{I_n}p_n \;\;\leq\;\; 1-\int_{I_n}p_{MP}+\int_{I_n}|p_n-p_{MP}|.
\end{split}\eeq
We have $1-\int_{I_n}p_{MP}=O(n^{-1})$ based on our computations of $\int_{J_n^+}p_{MP}$ and $\int_{J_n^-}p_{MP}$.  For the difference of measures, we apply Lemma \ref{lem:GotzeTikhomirov} and obtain
\beq\label{eq:meas_diff_In}\begin{split}
\int_{I_n}|p_n-p_{MP}|&\leq\frac{C}{n}\int_{I_n}\frac{1}{(d_+-x)(x-d_-)}dx
\leq\frac{C}{n}\int_{d_-+an^{-2}}^{d_+-an^{-2/3}}\frac{1}{(d_+-x)(x-d_-)}dx\\
&<\frac{2C}{n}\int_{d_-+an^{-2}}^{\frac{d_++d_-}{2}}\frac{1}{(d_+-x)(x-d_-)}dx
=O(n^{-1}\log n).
\end{split}\eeq
We conclude that 
\beq\label{eq:Jn-bound}
\sup_{J_n^-}|f_c^l|\int_{J_n^-}(p_n+p_{MP})=O(n^{-1}\log n).
\eeq
For the integral over $I_n$, we consider separately the intervals $I_n^-:=I_n\cap[0,1]$ and $I_n^+:=I_n\cap(1,\infty)$.  On $I_n^-$, the function $|f_c^L|$ is bounded above by a constant.  Combining this with line \eqref{eq:meas_diff_In}, we get
\beq\label{eq:In-bound}
\int_{I_n^-}|f_c^l|\cdot |p_n-p_{MP}|=O(n^{-1}\log n).
\eeq
Next, we bound the integral on $I_n^+$.  Making the substitution $x=d_+-un^{-2/3}$, we obtain
\beq\label{eq:In+bound}\begin{split}
\int_{I_n^+}|f_c^l|\cdot |p_n-p_{MP}|
&\leq\int_1^{d_+-an^{-2/3}}\frac{\mathbf{1}_{|\gamma-x|>cn^{-2/3}}}{|\gamma-x|^l}\cdot\frac{C}{n(d_+-x)(x-d_-)}dx\\
&=O(n^{\frac23 l-1})\cdot\int_a^{(d_+-1)n^{2/3}}\frac{\mathbf{1}_{|u+\sigma_n|>c}}{|u+\sigma_n|^l}\cdot\frac{du}{u}\\
&\leq O(n^{\frac23 l-1})\int_{\min(a,c)}^\infty \frac{1}{u^{l+1}}du=O(n^{\frac23 l-1})
\end{split}\eeq
Putting together the results from \eqref{eq:Jn+bound}, \eqref{eq:Jn-bound}, \eqref{eq:In-bound} and \eqref{eq:In+bound}, we have shown that 
\beq
\left|\int f_c^l\;p_n-\int f_c^l\;p_{MP}\right|=O(n^{\frac23 l-1}).
\eeq
It remains to compute the integral of $f_c^l$ with respect to the Mar{\v{c}}enko--Pastur measure.

For $\sigma_n\geq c$, the integral $\int f_c p_{MP}$ is $-s_{MP}(\g)$, where $s_{MP}(z)$ denotes the Stieltjes transform of $p_{MP}$, given by
\beq
s_{MP}(z):=\int\frac{1}{x-z}p_{MP}(x)dx=\frac{-z-\la+1+\sqrt{(z-\la-1)^2-4\la}}{2\la z}.
\eeq
Likewise, $\int f_c^2 p_{MP}$ is the derivative of the Stieltjes transform, evaluated at $\gamma$.  Using this and $\gamma=(1+\sqrt{\la})^2+\sigma_n n^{-2/3}$, we conclude for $\sigma_n\geq c$,
\beq
\int f_c^l(x)p_{MP}(x)dx=\begin{cases}-s_{MP}(\gamma)=\frac{1}{\la^{1/2}(1+\la^{1/2})}+O((\sigma_n n^{-2/3})^{1/2}) & l=1, \\
s'_{MP}(\gamma)=O((\sigma_n n^{-2/3})^{-1/2}) & l=2.
\end{cases}
\eeq
In the case of $\sigma_n<c$, we consider the integral over two  sub-intervals $(d_-,d_c)$ and $(d_c,d_+)$, where we set $d_c=d_++(\sigma_n-c)n^{-2/3}$.  Observe that, on the first interval, $f_c^l(x)=\frac{1}{(\gamma-x)^l}$ and, on the second interval, $|f_c^l(x)|\leq c^{-l}n^{2l/3}$. Thus, the integral on $(d_c,d_+)$ has the bound
\beq\label{eq:boundon_dcd+}
\left|\int_{d_c}^{d_+}f_c^l(x)p_{MP}(x)dx\right|=O\left(n^{2l/3}\int_{d_c}^{d_+}\sqrt{d_+-x}dx\right)=O(n^{\frac23 l-1}).
\eeq
For the integral on $(d_-,d_c)$, we consider the cases of $l=1$ and $l=2$ separately.  For $l=1$, we have
\beq\begin{split}
\int_{d_-}^{d_c}f_c \cdot p_{MP}
&=\int_{d_-}^{d_c}\left(f_c(x)-\tfrac{1}{d_+-x}\right)p_{MP}(x)dx
+\int_{d_-}^{d_+}\tfrac{1}{d_+-x}p_{MP}(x)dx
-\int_{d_c}^{d_+}\tfrac{1}{d_+-x}p_{MP}(x)dx\\
&=\int_{d_-}^{d_c}\left(f_c(x)-\tfrac{1}{d_+-x}\right)p_{MP}(x)dx
+\frac{1}{\la^{1/2}(1+\la^{1/2})}+O(n^{-1/3}),
\end{split}\eeq
where the second equality holds by the the fact that the middle term is equal to $-s_{MP}(d_+)$.  To bound the remaining integral on the right side, we have 
\beq\begin{split}
\int_{d_-}^{d_c}\left|f_c(x)-\frac{1}{d_+-x}\right|p_{MP}(x)dx
&=|\sigma_n|n^{-2/3}\int_{d_-}^{d_c}\frac{1}{(\gamma-x)(d_+-x)}p_{MP}(x)dx\\
&=O\left(n^{-2/3}\int_{d_-}^{d_c}\frac{dx}{(\gamma-x)\sqrt{x(d_+-x)}}\right).
\end{split}\eeq
Note that when $\la=1$, $d_-=0$ and the integrand contains a singularity at $x=0$.  However, the integral still remains bounded near that singularity, so we can replace the last line with $O\left(n^{-2/3}\int_{d_-}^{d_c}\frac{dx}{(\gamma-x)\sqrt{d_+-x}}\right)$. By the change of variable $x=d_c-yn^{-2/3}$, this becomes
\beq
n^{-2/3}\int_{d_-}^{d_c}\frac{dx}{(\gamma-x)\sqrt{d_+-x}}
=n^{-1/3}\int_0^{(d_c-d_-)n^{2/3}}\frac{dy}{(y+c)\sqrt{y+(c-\sigma_n)}}=O(n^{-1/3}).
\eeq
Thus, we have shown that, for $\sigma_n<c$ and $l=1$,
\beq
\int f_c^l p_{MP}=\frac{1}{\la^{1/2}(1+\la^{1/2})}+O(n^{-1/3}).
\eeq
It remains to bound the integral $\int f_c^l p_{MP}$ in the case $\sigma_n<c$ and $l=2$. A bound on the portion over $(d_c,d_+)$ is already obtained in \eqref{eq:boundon_dcd+}. The the portion over $(d_-,d_c)$ is
\beq
\int_{d_-}^{d_c}f_c^2(x)p_{MP}(x)dx=O\left(\int_{d_-}^{d_c}\frac{\sqrt{d_+-x}}{(\gamma-x)^2\sqrt{x}}dx \right)=O\left(\int_{d_-}^{d_c}\frac{\sqrt{d_+-x}}{(\gamma-x)^2}dx \right),
\eeq
where the second equality follows by similar reasoning as above. Again, making the substitution $x=d_c-yn^{-2/3}$,
\beq
\int_{d_-}^{d_c}\frac{\sqrt{d_+-x}}{(\gamma-x)^2}dx = n^{1/3}\int_0^{(d_c-d_-)n^{2/3}}\frac{\sqrt{y+(c-\sigma_n)}}{(y+c)^2}dy=O(n^{1/3}).
\eeq
This completes the proof of Lemma \ref{lem:stieltjes_transf_asymp}.
\end{proof}

Besides estimation of the expectation, we also need the following bound on the variance. 
\begin{lemma}\label{lem:16}
If $\eta_1,\dots,\eta_n$ are the unordered eigenvalues of $\frac1m M_{n,m}$, where $M_{n,m}$ is sampled from LUE, then
\[
\mathrm{Var}\left[\frac1n\sum_{i=1}^nf(\eta_i)\right]\leq \frac1n\int f^2(x)p_{n,LUE}(x)dx.
\]
\end{lemma}
\begin{proof}
In Chapter V of \cite{szego}, Laguerre polynomials $L_n^{(a)}$ where $a=m-n>-1$ (for general $\beta$, $a=\frac{\beta}{2}(m-n+1)-1$) are given by two conditions:
\begin{enumerate}
	\item $\int_0^\infty L_j^{(a)}(x)L_k^{(a)}(x)dx = \Gamma(a+1)\binom{k+a}{k}\delta_{jk}$,
	\item coefficient of $x^k$ in $L_k^{(a)}(x)$ has sign $(-1)^k$.
\end{enumerate}
Let $\phi_k(x;a)=h_k^{-1/2}x^{a/2}e^{-x/2}L_k^{(a)}(x)$, where $h_k=\int_0^\infty L_k^{(a)}(x)^2x^a e^{-x}dx$. Then $(\phi_k)_k$ are orthonormal functions with respect to $((0,\infty),dx)$. 

Let $f_n(x_1,\dots,x_n)$ be the joint density of unordered eigenvalues $\eta_1,\dots, \eta_n$ of scaled LUE matrix $\frac 1m M_{n,m}$, $m\geq n$. Let $R_k(x_1,\dots,x_n)$ for $k\geq 1$ be the corresponding $k$-point correlation function, and $S_{n,\text{LUE}}(x)$ be the correlation kernel. Then, 
\begin{equation}
	R_k(x_1,\dots,x_k) = \frac{n!}{(n-k)!}\int\dots\int f_n(x_1,\dots,x_n)dx_{k+1}\dots dx_{n}.
\end{equation}
Moreover, for any integrable function $g$ that is symmetric in $k$ variables,
\begin{equation}\label{eqn:expectation_via_Rk}
	\bE g(\eta_1,\dots,\eta_k)=\frac{(n-k)!}{n!}\int\dots\int g(x_1,\dots,x_k)R_k(x_1,\dots,x_k)dx_1\dots dx_k.
\end{equation}
Note that the $k$-point correlation function for unordered eigenvalues of the unscaled LUE, denoted by $\tilde{R}_k$, is related to $R_k$ by
\[
R_k(x_1,\dots,x_k)=m^k\tilde{R}_k(mx_1,\dots,mx_k).
\]
The normalized one-point correlation of (scaled) eigenvalues then satisfies
\begin{equation}\label{eqn:corr}
	p_{n}(x)=\frac{1}{n}R_1(x)=\frac{1}{\lambda}\tilde{R}_1(mx).
\end{equation}
By the determinantal structure of the eigenvalues (see for example, Section 5.4 of \cite{deift1999}), $\tilde{R}_k$ satisfies
\[
\tilde{R}_k(y_1,\dots,y_k)=\det(S_{n,\text{LUE}}(y_i,y_j))_{i,j=1,\dots,k},
\]
where $S_{n,\text{LUE}}(x,y)=\sum_{j=0}^{n-1}\phi_j(x;a)\phi_j(y;a)$ and $a=m-n$. Thus $R_1(x)=mS_{n,\text{LUE}}(x,x)$ and
\begin{align*}
	R_2(x,y)
	&=m^2\left[R_1(mx)R_1(my)-S^2_{n,\text{LUE}}(mx,my)\right]\\
	&=n^2\left(p_{n}(x)p_{n}(y)-\lambda^{-2}S^2_{n,\text{LUE}}(mx,my)\right).
\end{align*}
Set $I=\bE\left[n^{-1}\sum_{i=1}^{n}f(\eta_i)\right]^2$. We have
\begin{equation}\label{eqn:I}
	\begin{split}
		I&=n^{-2}\bE\left[\sum_{i=1}^{n}f^2(\eta_i)\right]+n^{-2}\bE\left[\sum_{i\neq j}f(\eta_i)f(\eta_j)\right]\\
		&=n^{-1}\bE f^2(\eta_1)+n^{-2}n(n-1)\bE[f(\eta_1)f(\eta_2)]\\
		&=n^{-2}\int f^2(x)R_1(x)dx+n^{-2}\iint f(x)f(y)R_2(x,y)dxdy \quad \text{by } \eqref{eqn:expectation_via_Rk} \\
		&=n^{-1}\int f^2(x)p_{n}(x)dx+\left(\int f(x)p_{n}(x)dx\right)^2-\frac1\lambda\iint f(x)f(y)S^2_{n,\text{LUE}}(mx,my)dxdy.
	\end{split}
\end{equation}
Write $S_{n,\text{LUE}}(x,y)$ as a sum of products $\phi_j(x)\phi_j(y)$, the last integral on the right hand side of \eqref{eqn:I} is a sum of squares (of integrals) so it is positive. In addition, recall the definition of $I$ and that $\left(\int f(x)p_{n}(x)dx\right)^2=\left(n^{-1}\bE \sum_{i=1}^nf(\eta_i)\right)^2$. The last equality of \eqref{eqn:I} then implies
\[
\mathrm{Var}\left[\frac1n\sum_{i=1}^nf(\eta_i)\right] \leq n^{-1}\int f^2(x)p_{n}(x)dx.
\]
\end{proof}

We now combine Lemmas \ref{lem:stieltjes_transf_asymp} and \ref{lem:16} to obtain Proposition \ref{prop:3}.  For the $l=1$ case, 
\beq
\Var\left(\frac1n\sum f_c(\mu_i)\right)\leq\frac1n\EE\left(\frac1n\sum f_c^2(\mu_i)\right)=O(n^{-2/3}),
\eeq
where the inequality follows from Lemma \ref{lem:16} and the big-$O$ term follows from Lemma \ref{lem:stieltjes_transf_asymp}.  
For the $l=2$ case, we observe that $f_c^2$ is strictly positive, so $\frac1n\sum f_c^2(\mu_i)=O\left(\EE(\frac1n\sum f_c^2(\mu_i))\right)$ with high probability.  These observations along with the expectations in Lemma \ref{lem:stieltjes_transf_asymp} imply
\beq\label{eq:Prop3truncated}\begin{split}
	\sum_{i=1}^n f_c(\mu_i)-\frac{n}{\lambda^{1/2}(1+\lambda^{1/2})}&= O\left(\left(1+|\sigma_n|^{1/2}\right)n^{2/3}\right),\\
	\sum_{i=1}^n f_c^2(\mu_i)&=O(n^{4/3}).
\end{split}\eeq
Finally, from Lemma \ref{lem:4}, we know that, for any $\e>0$, there exists $c$ such $\sum f_c^l(\mu_i)=\sum(\gamma-\mu_i)^{-l}$ with probability $1-\e$.  Since \eqref{eq:Prop3truncated} holds for any $c$, we obtain Proposition \ref{prop:3}.

\subsection{Extension of Proposition \ref{prop:3} to LOE}\label{sec:extension_to_LOE}
We extend Proposition \ref{prop:3} from the LUE case to the LOE case using the same method that the authors of \cite{JKOP1} use to extend their result from the GUE case to the GOE case. Since the proof is nearly identical, we do not repeat it here, but rather summarize the key steps in the proof and provide the translation between their setting and ours.

In both our setting and that of \cite{JKOP1}, a key tool to extend results from $\a=1$ to the $\a=2$ is a result from Forrester and Rains about the relationships between eigenvalues of orthogonal, unitary, and symplectic ensembles \cite{ForresterRains}.  Among other findings, their Theorem 5.2 states that
\begin{align}
\text{even}(\text{GOE}_n\cup\text{GOE}_{n+1})&=\text{GUE}_n,\\
\text{even}(\text{LOE}_{n,m}\cup\text{LOE}_{n+1,\; m+1})&=\text{LUE}_{n,m}.
\end{align}
Here $\text{LOE}_{n,m}$ denotes the set of eigenvalues of the LOE matrix that we previously called $M_{n,m}$ (with the notations LUE, GOE, GUE defined similarly). The notation $\text{even}(\cdot)$ denotes the set containing only the even numbered elements among the ordered list of elements in the original set.

The other key tool in the extension from $\a=1$ to $\a=2$ is Cauchy's eigenvalue interlacing theorem.  This theorem states that, if a symmetric $(n+1)\times(n+1)$ matrix and its principal minor have eigenvalues $\la_1\geq\la_2\geq\cdots\la_{n+1}$ and $\mu_1\geq\mu_2\geq\cdots\geq\mu_n$ respectively, then the eigenvalues satisfy the relation
\beq
\la_1\geq\mu_1\geq\la_2\geq\cdots\geq\la_n\geq\mu_n\geq\la_{n+1}.
\eeq
The authors of \cite{JKOP1} use this to relate the eigenvalues of a GOE matrix $M_{n+1}$ to the eigenvalues of its principal minor, which is distributed as an $n\times n$ GOE matrix.  We can also do this for an LOE matrix, provided that we use the tridiagonal representation of LOE (this guarantees that the principal minor is also distributed as an LOE matrix).

Using these two tools, the authors of \cite{JKOP1} prove a theorem about  
$n\times n$ GUE and GOE matrices $M_n^\CC$ and $M_n^\RR$ (see Theorem 19 of \cite{JKOP1}).  We state below the analogous theorem in our setting, which follows from the same proof.
\begin{theorem}
Let $M_{n,m}^\CC$ and $M_{n,m}^\RR$ denote LUE and LOE matrices respectively.
If $f_n$ is a sequence of functions such that
\beq
f_n(M_{n,m}^\CC)=a_n+O(b_n)
\eeq
for some sequences $a_n$ and $b_n$, then
\beq
f_n(M_{n,m}^\RR)=a_n+O(b_n+\text{TV}(f_n))
\eeq
where $\text{TV}(f_n)$ denotes the total variation of $f_n$ and the big-O bounds hold with probability converging to 1.
\end{theorem}

In this theorem, the functions $f_n$ are taken to be single-variable functions where the notation $f_n(M_{n,m})$ is shorthand for $\sum_{i=1}^nf_n(\mu_i)$. The proof is for the unscaled version of these matrices, but it holds for the scaled version as well since scaling the argument does not change the total variation of the function.  Using this theorem, and noting that $\text{TV}(f_c^l)=O(n^{\frac23 l-1})$ for $f_c$ as defined in \eqref{eq:def_fc}, we can extend Lemma \ref{lem:stieltjes_transf_asymp} from the LUE case to the LOE case.  We can further use this theorem to obtain a weaker version of Lemma \ref{lem:16} for the LOE case, namely
\beq
\Var\left[\frac1n\sum_{i=1}^n f(\eta_i)\right]\leq O\left(\frac1n\int f^2(x)p_{n,\text{LOE}}(x)dx+\text{TV}^2(f)\right).
\eeq
These LOE versions of Lemmas \ref{lem:stieltjes_transf_asymp} and \ref{lem:16} are enough to extend Proposition \ref{prop:3} from the LUE case to the LOE case.

\subsection{Proof of Lemma \ref{lem:bound_pn}}\label{sec:edgebound_pf}
We use the same notations as in the proof of Lemma \ref{lem:16}. 
The following equations follow from displays (11) to (15) of \cite{Ma2012}. To begin, note that the one-point correlation function $\tilde{R}_1(x)$ of unordered eigenvalues of unscaled LUE matrix has integral representation
\[
\tilde{R}_1(x)
=\sum_{i=0}^{n-1}\phi_k(x;a)^2
=2\int_0^\infty\phi(x+z;a)\psi(x+z;a)dz,
\]
where
\begin{align*}
	\phi(x;a)&:=(-1)^n\sqrt{\frac{n(n+a)}{2}}\phi_n(x;a-1)x^{-1/2}\mathbbm{1}_{\{x\geq0\}},\\
	\psi(x;a)&:=(-1)^n\sqrt{\frac{n(n+a)}{2}}\phi_{n-1}(x;a+1)x^{-1/2}\mathbbm{1}_{\{x\geq0\}}.
\end{align*}
Throughout the remaining of the proof, we write $\phi(x)$ and $\psi(x)$ when the the parameter $a$ is clear from the context. Given integer $k$, let $k_{-}=k-\frac{1}{2}$. Set
\[
	u_n=(\sqrt{n_-}+\sqrt{m_-})^2, \quad r_n=(\sqrt{n_-}+\sqrt{m_-})\left(\frac{1}{\sqrt{n_-}}+\frac{1}{\sqrt{m_-}}\right)^{1/3},
\]
and define $z_n=z_n(s)$ by $d_+m+\lambda^{-2/3}sm^{1/3}=u_n+z_nr_n$. Then $z_n=z_\lambda s+O(n^{-1/3})$, where the big-O term is uniformly in $s$. We also define 
\begin{align*}
	\eta(z)&=u_n+zr_n,\\
	\phi^{(\eta)}(z)&=r_n\phi(\eta(z)), \quad \psi^{(\eta)}(z)=r_n\psi(\eta(z)).
\end{align*}
From \eqref{eqn:corr}, $p_{n,\text{LUE}}(d_++sn^{-2/3})$ is in fact
\begin{align*}
	\frac{1}{\lambda}\tilde{R}_1(u_n+z_nr_n) 
	&=\frac{2r_n^{-2}}{\lambda}\int_0^\infty \phi^{(\eta)}(z_n+zr_n^{-1})\psi^{(\eta)}(z_n+zr_n^{-1})dz
	=\frac{2r_n^{-1}}{\lambda}\int_{z_n}^\infty \phi^{(\eta)}(z)\psi^{(\eta)}(z)dz.
\end{align*}
	
By Proposition 2 of \cite{Ma2012}, 
\begin{equation}\label{eqn:prop2Ma}
	\forall z_0\in\bR,\exists N_0=N_0(z_0,\lambda), \quad n\geq N_0 \implies |\phi^{(\eta)}(z)|, |\psi^{(\eta)}(z)| \leq C(z_0)e^{-z} \quad \forall z\geq z_0.
\end{equation}
	
Apply \eqref{eqn:prop2Ma} with $z_0=z_\lambda s_0$, then for sufficiently large $n$, for all $s>s_0$,
\begin{align}\label{eqn:pn_LUE}
	p_{n,\text{LUE}}(d_++sn^{-2/3})
	\leq\frac{2r_n^{-1}}{\lambda} C(z_0)^2\exp(-2z_n )
	=O\left(n^{-1/3}\exp(-2z_\lambda s)\right), \quad n\to\infty.
\end{align}
We now verify the edge bound for $p_{n,\text{LOE}}(d_++sn^{-2/3})$. Equation (15) of \cite{Ma2012}, in our notations, states that for $x,y>0$,
\begin{align*}
	S_{n,\text{LOE}}(x,y)
	&=S_{n,\text{LUE}}(x,y)+\psi(x)\frac12\int_0^\infty\phi(u)\text{sgn}(y-u)du\\
	&= S_{n,\text{LUE}}(x,y)+\psi(x)\left[\frac12 I_\phi-\int_y^\infty\phi(u)du\right],
\end{align*}
where $I_\phi=\int_0^\infty\phi(u)du$. Recall $S_{n,\text{LUE}}(x,x)=\tilde{R}_1(x)$ and the relation \eqref{eqn:corr}. The above display implies
\begin{align*}
	p_{n,\text{LOE}}(x)=p_{n,\text{LUE}}(x)+\psi(mx)\left[\frac12 I_\phi-\int_{mx}^\infty\phi(u)du\right].
\end{align*}
Substitute $x=d_++\sigma_n n^{-2/3}$ and use notation $mx=u_n+z_nr_n$, we obtain 
\begin{align}\label{eqn:pn12}
	p_{n,\text{LOE}}(d_++sn^{-2/3})
	&=p_{n,\text{LUE}}(d_++sn^{-2/3})+\frac{r_n^{-1}\psi^{(\eta)}(s)}2\left[\frac12 I_\phi-\int_{z_n}^\infty\phi^{(\eta)}(z)dz\right].
\end{align}
By \eqref{eqn:prop2Ma}, $\int_{z_n}^\infty\phi^{(\eta)}(z)dz\leq Ce^{-z_\lambda s}$ for some $C=C(s_0,\lambda)>0$, for all $s\geq s_0$. In addition, the quantity $I_\phi$ is denoted by $\beta_N$ in \cite{Ma2012}, where it is shown to satisfies $I_\phi=\frac1{\sqrt{2}}+O(n^{-1})$. Thus, the second term on the right hand side of \eqref{eqn:pn12} is $O\left(n^{-1/3}e^{-z_\lambda s}\right)$ uniformly for $s\geq s_0$. We conclude 
\[
p_{n,\text{LOE}}(d_++sn^{-2/3}) \leq Cn^{-1/3}e^{-z_\lambda s}, \quad \forall s\geq s_0.
\]

\begin{appendix}

\section{Technical lemmas}\label{sec:technical_lemmas}
Consider the following process
\begin{align*}
	\hR_2&=R_2\\
	\hR_i&=L_i+\w_i\dots \w_3\hR_2-A_{0i}+\hB_{0i}+\hB_{1i}+\phi_{\frac2{n(1-\w_i)}}(B_{2i})+\hB_{3i}, \quad 3 \leq i \leq n
\end{align*}
where
\begin{align*}
	\hB_{0i} &= \left(\a_{i-1}+ ( \tau_{i-1} + \a_{i-1} ) \hR^{(1)}_i\right) \beta_i  + \omega_i \left(\a_{i-2} +( \tau_{i-2} + \alpha_{i-2} )\hR^{(1)}_{i-1}\right) \beta_{i-1} \\
	&\quad + \dots + \w_i\dots\w_4\left(\alpha_2  + (\tau_2  + \a_2 ) \hR^{(1)}_3\right) \b_3, \quad \hR^{(1)}_i = \frac{\hR_{i-1}}{1 - \phi_{1/2}(R_{i-1})}, \\
	\hB_{1i} &= \a_{i-1}\delta_i \hR^{(1)}_i + \w_i\a_{i-2}\delta_{i-1}\hR^{(1)}_{i-1} + \dots + \w_i\dots\w_4\alpha_2 \delta_3 \hR^{(1)}_3,\\
	\hB_{3i} &= \hR^{(3)}_i + \w_i\hR^{(3)}_{i-1} + \dots + \w_i\dots\w_4\hR^{(3)}_3, \quad \hR^{(3)}_i = \w_{i} \phi_{n^{-1/3}}(\hR_{i-1})\hR_{i-1}.
\end{align*}
The event that $2|1-\a_1|^{-1}>1$ and $|R_i| \leq n^{-1/3}$ and $|B_{2i}|\leq \frac2{n(1-\w_i)}$ for all $3\leq i \leq n$ occurs with probability $1 -O(\log^{-5}n)$. The bound for $|R_i|$ holds by Lemma \ref{lem:unif_Ri}, and bound for $|B_{2i}|$ follows from inequality \eqref{eqn:tB2i} for $\tB_{2i}$ in the proof of Lemma \ref{lem:unif_Ri}. Thus on this event, $\hR_2=R_2$ and  $\hR^{(\ell)}_3=R^{(\ell)}_3$ for $\ell=1,3$, and $\phi_{\frac2{n(1-\w_i)}}(B_{23})=B_{23}$. Thus $\hR_{3}=R_3$. Repeat the argument with increasing $i$, we obtain that 
\beq\label{eqn:hRvR}
\hR_i=R_i \quad \text{for every }2\leq i\leq n \quad \text{with probability }1-O(\log^{-5}n).
\eeq

\subsection{Proof of Lemma \ref{lem:Ri2_bound}}
Consider $\sum_{i = 3}^n\hR_i^2$.
From the inequality
\beq
\sum_{i=3}^n\|\hR_i^2\|_1\leq\sum_{i=3}^n\|\hR_i\|^2_2\leq\sum_{i=3}^n\|\hR_i\|_4^2,
\eeq
and Markov's inequality, it suffices to show the last sum is of order 1. Lemma \ref{lem:SG_boucheron} implies that if $X\in \SG(v,u)$, then $\|X\|_p\leq C_p(v^{\frac p2}+u^p)^{\frac1p}$. By $\eqref{eqn:uvYi}$ and \eqref{eqn:uvsi},
\beq\label{eqn:L4_Li}
\|L_i\|_4\leq\|Y_i\|_4+\|s_i\|_4\leq\frac{C\a^{\frac12}}{\sqrt{n(1-\w_i)}}.
\eeq
Also, $\|\a_i\|_4,\|\b_i\|_4 = O(n^{-\frac12})$ uniformly in $i$. Hence, by \eqref{eqn:R2},
\begin{align}\label{eqn:R2-hat}
	\|\hR_2\|_4\leq\|R_2\|_4
	&\leq |\w_2-\g_2|+\|\a_2\|_4+(1+C\|\a_1\|_4)\|\b_2\|_4+\frac{1+C\|\a_1\|_4}{|\rho_2^+|}=O\left(n^{-\frac12}\right).
\end{align}
Thus $\|\w_3\dots\w_i\hR_2\|_4\leq\w_3\|\hR_2\|_4=O(n^{-\frac32})$. Observe that $\left\|\phi_{\frac2{n(1-\w_i)}}(B_{2i})\right\|_4\leq\frac2{n(1-\w_i)}$, and $|A_{0i}|<\frac1{n(1-\w_i)^2}$ from \eqref{eqn:A0i}. Now, for each $i$,
\beqq
\begin{split}
	\|\left(\a_{i-1}+( \tau_{i-1}+\a_{i-1})\hR^{(1)}_i\right)\beta_i\|_4 
	&\leq \|\a_{i-1}\|_4\|\b_i\|_4+C\|\b_i\|_4\|\hR^{(1)}_i\|_4
	\leq Cn^{-1}+Cn^{-1/2}\|\hR^{(1)}_i\|_4.  
\end{split}
\eeqq
Hence,
\beq\label{eqn:L4_hB0i}
\|\hB_{0i}\|_4\leq\frac C{n(1-\w_i)}+\frac{Cn^{-\frac12}}{1-\w_i}\cdot\max_{3\leq j\leq i-1}\|\hR_j\|_4.
\eeq
Similarly, $\|\a_{i-1}\delta_i\hR_i^{(1)}\|_4\leq Cn^{-\frac12}\|\hR_{i-1}\|_4$ and $\|\hR_i^{(3)}\|_4\leq n^{-\frac13}\|\hR_{i-1}\|_4$ so 
\beq\label{eqn:L4_hB1i}
\|\hB_{1i}\|_4 
\leq \frac{Cn^{-1/2}}{1-\w_i}\max_{3\leq j\leq i-1}\|\hR_{j}\|_4 \quad \text{and} \quad \|\hB_{3i}\|_4 
\leq \frac{n^{-1/3}}{1-\w_i}\max_{3\leq j\leq i-1}\|\hR_{j}\|_4.
\eeq
Combining all the estimates, we have
\beq
\begin{split}
	\|\hR_i\|_4 
	&\leq \frac{C\alpha^{1/2} +o(1)}{\sqrt{n(1-\w_{i})}}+o(1)\max_{3\leq j\leq i-1}\|\hR_{j}\|_4, 3\leq i\leq n.
\end{split}
\eeq
Since $\|\hR_2\|_4 = O(n^{-\frac12})$, by induction we obtain for sufficiently large $n$,
\beq\label{eqn:L4_hRi}
\|\hR_i\|_4\leq\frac{C\alpha^{1/2}}{\sqrt{n(1-\w_{i})}}, \quad \text{for } i=3, \dots,n.
\eeq
Therefore, 
\beq
\begin{split}
	\sum_{i=3}^n\|\hR_i^2\|_1
	&=O\left(\sum_{i=3}^n\frac{C\alpha^{1/2}}{\sqrt{n(1-\w_{i})}}\right)
	=O\left(\frac1n\sum_{i=3}^{n-n^{\frac13}\sigma_n}\left(\frac{n-1}n\right)^{-\frac12}+\frac1n \sum_{i>n-n^{\frac13}\sigma_n}n^{-\frac13}\sigma_n^{\frac12}\right)=O(1),
\end{split}
\eeq
and we obtain $\sum_{i=3}^n\hR_i^2=O(1)$ with probability $1-o(1)$. By \eqref{eqn:hRvR}, the same statement applies to $\sum_{i=3}^nR_i^2$. 

\subsection{Proof of Lemma \ref{lem:AB_bound}}
By \eqref{eqn:hRvR}, it suffices to show that, with probability $1-o(1)$,
\beq
\sum_{i=3}^n \w_{3}\dots\w_iR_2+\hB_{0i}+\hB_{1i}+\phi_{\frac2{n(1-\w_i)}}(B_{2i}) = O(1).
\eeq
This holds as long as the $L_1$ norm of this sum is of order 1. By Definition \ref{defn:gi} and $\|R_2\|_1\leq \|R_2\|_4 = O(n^{-1/2})$ (which is a consequence of \eqref{eqn:R2-hat}),
\beqq
\sum_{i=3}^n\w_{3}\dots\w_i\|R_2\|_1 =\w_3g_4\|R_2\|_1=O(\w_3\|R_2\|_4)=O(n^{-\frac32}).
\eeqq
Here, $g_4=O(1)$ follows from Lemmas \ref{lem:gi_bounds} and \ref{cor:wi}, and a direct computation gives $\w_3=O(n^{-1})$.
By \eqref{eqn:L4_hB0i} and \eqref{eqn:L4_hRi} and Corollary \ref{cor:wi},
\beqq
\sum_{i=3}^n\|\hB_{0i}\|_1\leq\sum_{i=3}^n \frac{C}{n(1-\w_i)}+\sum_{i=3}^n\frac{C \a^{\frac12}}{n(1-\w_i)^{\frac32}} = O(1).
\eeqq
Similarly, by \eqref{eqn:L4_hB1i},
\beqq
\sum_{i=3}^n\|\hB_{1i}\|_1\leq\sum_{i=3}^n\frac{C\a^{\frac12}}{n(1-\w_i)^{\frac32}} = O(1).
\eeqq
Lastly, 
\beqq
\sum_{i=3}^n\left\|\phi_{\frac2{n(1-\w_i)}}(B_{2i})\right\|_1\leq\sum_{i=3}^n\frac2{n(1-\w_i)}=O(1).
\eeqq

\subsection{Proof of Lemma \ref{lem:E2_bound}}
	Observe that $E_1=\frac{a_1^2-\g m}{|\rho_1^+|} = \a_1-1$. Hence,
	\beqq
	E_2 = E_1(R_2-1) = (1-\a_1)(1-R_2).
	\eeqq
	By Lemma \ref{lem:SG_boucheron}, we have with probability $1 - O(n^{-1})$, $|\a_1| = O(n^{-1/2}\log^{1/2}n)$ and $|R_2| = O(n^{-1/2})$. Thus there exists $C_1<0<C_2$ such for sufficiently large $n$,
	\beqq
	\log|E_2|=\log|1-\a_1|+\log|1-R_2|\in(C_1,C_2)
	\eeqq
	with probability $1-O(n^{-1})$.

\subsection{Proof of Lemma \ref{lem:B3i_star_hat}}
	We apply \eqref{eqn:hRvR} to replace $B_{3i}$ by $\hB_{3i}$ for every $i=3,\dots, n$, then show that $\sum_{i = 3}^n\hB_{3i}-B^*_{3i}=O(1)$ with probability $1-o(1)$. Recall 
	\beqq
	\hB_{3i}=\hR_{i}^{(3)}+\w_iR_{i-1}^{(3)}+\w_i\dots\w_4R_{3}^{(3)},
	\eeqq
	where $\hR_{i}^{(3)}=\w_i\phi_{n^{-1/3}}(\hR_{i-1})\hR_{i-1}$. Consider
	\beqq
	\hC_{3i} = (\omega_{i}\hR^2_{i-1}) + \omega_i(\omega_{i-1}\hR^2_{i-2}) + \dots + \omega_i\dots \omega_4 (\omega_{3}\hR^2_{2}).
	\eeqq
	By Lemma \ref{lem:unif_Ri} and the fact $R_i=\hR_i$ for all $2\leq i\leq n$ with probability $1-o(1)$, it holds with probability $1-o(1)$ that $\hB_{3i}=\hC_{3i}$ for all $3\leq i\leq n$. Hence, it is sufficient to show $\sum_{i=3}^n\|\hC_{3i}-B^*_{3i}\|_1 = O(1)$.
	\beq
	\|\hC_{3i}-B^*_{3i}\|_1
	\leq\sum_{j=3}^{i-1}\w_i\dots\w_j\|\hR^2_{j-1}-L^2_{j-1}\|_1\leq\frac{\w_i}{1-\w_i}\max_{2\leq j\leq i-1}\|\hR^2_{j-1}-L^2_{j-1}\|_1.
	\eeq
	By H\"{o}lder's inequality,
	\beqq
	\|\hR^2_{i}-L^2_{i}\|_1\leq \|\hR_i-L_i\|_2\|\hR_i+L_i\|_2.
	\eeqq
	Apply triangle inequality, we have
	\beq
	\begin{split}
		\|\hR_i - L_i\|_2 
		&\leq  \|\omega_{i} \dots \omega_3 \hR_2\|_2 + \|A_{0i}\|_2 + \|\hB_{0i}\|_2 + \|\hB_{1i}\|_2+ \left\|\phi_{\frac{2}{n(1 - \omega_{i})}}(B_{2i}) \right\|_2 + \|\hB_{3i}\|_2.
	\end{split}
	\eeq
	Since $\|X\|_2\leq\|X\|_4$ for all random variables $X\in L_4(\bP)$, we can apply the bounds on $L_4$-norms obtained in the proof of Lemma \ref{lem:AB_bound}. Thus, for some $C>0$ and sufficiently large $n$, the following four inequalities hold.
	\beqq
	\|\w_i\dots\w_3\hR_2\|_2 \leq Cn^{-\frac32}, \quad \|A_{0i}\|_2\leq\dfrac C{n(1-\w_i)^2}, \quad \|\hB_{0i}\|_2\leq\dfrac C{n(1-\w_i)}+\dfrac C{n(1-\w_i)^{\frac32}}, \quad \|\hB_{1i}\|_2\leq\dfrac C{n(1-\w_i)^{\frac32}}.
	\eeqq
	At the same time, since $\|\hR_i^{(3)}\|_2\leq\|\hR_i^2\|_2=\|\hR_i\|^2_4=O\left(\frac1{n(1-\w_i)}\right)$,
	\beqq
	\|\hB_{3i}\|_2\leq\frac1{1-\w_i}\cdot\max_{3\leq i\leq i}\|\hR_i^{(3)}\|_2\leq\frac C{n(1-\w_i)^2}.
	\eeqq 
	Hence, $\|\hR_i - L_i\|_2 = O\left(\frac1{n(1-\w_i)^2}\right)$. Similarly, by \eqref{eqn:L4_Li} and \eqref{eqn:L4_hRi},
	\beq
	\|\hR_i+L_i\|_2\leq \|\hR_i\|_4+\|L_i\|_4=O\left(\frac1{\sqrt{n(1-\w_i)}}\right).
	\eeq
	Therefore,
	\beqq
	\|\hR^2_{i}-L^2_{i}\|_1\leq \|\hR_i-L_i\|_2\|\hR_i+L_i\|_2=O\left(\frac1{n^{\frac32}(1-\w_i)^{\frac52}}\right),
	\eeqq
	and $\|\hC_{3i}-B^*_{3i}\|_1=O\left(\frac1{n^{\frac32}(1-\w_i)^{\frac72}}\right)$. By Lemma \ref{lem:wi}, we conclude
	\beq
	\begin{split}
		\sum_{i=3}^n&\|\hC_{3i}-B^*_{3i}\|_1
		=O\left(\sum_{i=3}^{n-n^{\frac13}\sigma_n}\frac1{n^{\frac32}}\left(\frac{n-i}{n}\right)^{-\frac74}+\sum_{i=n-n^{\frac13}\sigma_n}^n n^{-\frac32}(n^{\frac13}\sigma_n^{-\frac12})^{\frac72} \right)
		=O\left(\sigma_n^{-\frac34}\right)=o(1).
	\end{split}
	\eeq

\subsection{Proof of Lemma \ref{lem:B3i_smallstuff}}
We expand the terms inside the sum to have
\beq
\sum_{i=4}^n (g_i-1)\left[2Y_{i-1}(\alpha_{i-1}-\w_3\cdots\w_{i-1}\a_2)+(\alpha_{i-1}-\w_3\cdots\w_{i-1}\a_2)^2\right]=:P_1-P_2+P_3,
\eeq
where
\beq\begin{split}
P_1:&=\textstyle{\sum_{i=4}^n}\;2(g_i-1)Y_{i-1}\a_{i-1},\\
P_2:&=\textstyle{\sum_{i=4}^n}\;2(g_i-1)Y_{i-1}\w_3\cdots\w_{i-1}\a_2,\\
P_3:&=\textstyle{\sum_{i=4}^n}\;(g_i-1)(\a_{i-1}-\w_3\cdots\w_{i-1}\a_2)^2.
\end{split}\eeq
Recalling that
\beq\begin{split}
Y_{i-1}=\sum_{j=3}^{i-2}X_j\w_{j+1}\cdots\w_{i-1}+X_{i-1},
\qquad X_j=(1+\tau_{j-1})(\delta_j\a_{j-1}+\b_j),
\end{split}\eeq
we further decompose $P_1$ into
\beq\begin{split}
P_1=&\sum_{i=4}^n 2(g_i-1)\alpha_{i-1} \bigg(\sum_{j=3}^{i-2}(1+\tau_{j-1})\alpha_{j-1}\delta_j\omega_{j+1}\cdots\omega_{i-1}+(1+\tau_{i-2})\alpha_{i-2}\delta_{i-1}\bigg)\\
&\quad+\sum_{i=4}^n 2(g_i-1)\alpha_{i-1} \bigg(\sum_{j=3}^{i-2}(1+\tau_{j-1})\beta_j\omega_{j+1}\cdots\omega_{i-1}+(1+\tau_{i-2})\beta_{i-1}\bigg)\\
=&:P_{11}+P_{12}.
\end{split}\eeq
We now rewrite each part of the summation as quadratic forms and apply Lemma \ref{lem:HansonWright} as follows. Define
\beq\begin{split}
&\bfa = (\alpha_2, \alpha_3 \dots, \alpha_{n-1})^T , \quad \bfb = (\beta_3, \beta_4 \dots, \beta_{n-1})^T, \\
&\bfaback = (\a_3,\a_4, \dots, \a_{n-1})^T,\quad \bfafront = (\alpha_2, \alpha_3, \dots, \alpha_{n-2})^T.
\end{split}\eeq

\textbf{Bound for $P_{11}$ part:} Observe that 
\beq
P_{11}=(\bfaback)^T Z \bfafront
\eeq
where $Z$ is the lower triangular matrix
\beq\begingroup
\medmuskip=0mu
Z=2
\begin{pmatrix}
(g_4-1)(1+\tau_2)\delta_3 & && &\\
(g_5-1)(1+\tau_2)\delta_3\omega_4 & (g_5-1)(1+\tau_3)\delta_4 &&&\\
(g_6-1)(1+\tau_2)\delta_3\omega_4\omega_5 & (g_6-1)(1+\tau_3)\delta_4\omega_5 & (g_6-1)(1+\tau_4)\delta_5&&\\
\vdots & & & \ddots& \\
(g_n-1)(1+\tau_2)\delta_3\omega_4\cdots\omega_{n-1} &\cdots
& & \cdots & (g_n-1)(1+\tau_{n-2})\delta_{n-1}
\end{pmatrix}.
\endgroup\eeq
Alternatively, we can express this as a quadratic form
\beq
\mathbf{a}^T\tilde{Z}\mathbf{a}
\eeq
where $\tilde{Z}$ is the matrix $Z$ with a row of zeros appended at the top and a column of zeros appended at the right. That is, 
\beq
\tilde{Z}_{ij}=2(g_{i+2}-1)(1+\tau_{j+1})\delta_{j+2}\omega_{j+3}\cdots\omega_{i+1}\quad\text{for }i\geq j+1.
\eeq
Next, we observe that
\beq
\mathbf{a}^T\tilde{Z}\mathbf{a}=\frac12 \mathbf{a}^T(\tilde{Z}+\tilde{Z}^T)\mathbf{a}.
\eeq
Since $\tilde{Z}+\tilde{Z}^T$ is a symmetric matrix and $\mathbf{a}$ is a vector of independent random variables satisfying $\a_i\in SG(c_1n^{-1},c_2n^{-1})$ we can apply Lemma \ref{lem:HansonWright}.  Furthermore, since $\tilde{Z}$ has zeros along the diagonal, $\EE \bfa^T Z\bfa=0$ so we conclude that with probability $1-o(1)$,
\beq
\mathbf{a}^T\tilde{Z}\mathbf{a}=O\left(\frac{\nu_n}{n}\| \tilde{Z}+\tilde{Z}^T\|_{\HS} \right)
=O\left(\frac{\nu_n}{n}\| \tilde{Z}\|_{\HS} \right),
\eeq
where $\nu_n$ is some slowly growing function of $n$ (for example $\sqrt{\log n}$).  We observe that 
\beq
\|\tilde{Z}\|_{\HS}=\sqrt{\sum_{i=2}^{n-2}\sum_{j=1}^{i-1}\tilde{Z}_{ij}^2}.
\eeq
We bound the quantity $\tilde{Z}_{ij}^2$ as follows:
\beq\begin{split}
\tilde{Z}_{ij}^2&=4(g_{i+2}-1)^2(1+\tau_{j+1})^2\delta_{j+2}^2\omega_{j+3}^2\cdots\omega_{i+1}^2
\leq C\frac{1}{(1-\omega_{i+2})^2}\left(\frac{j+1}{n}\right)^2 \omega_{j+3}^2\cdots\omega_{i+1}^2\\
&< \frac{C(i+2)}{n(1-\omega_{i+2})^2}\omega_{j+3}^2\cdots\omega_{i+1}^2.
\end{split}\eeq
For fixed $i$, 
\beq\begin{split}
\sum_{j=1}^{i-1}\tilde{Z}_{ij}^2
&\leq \sum_{j=1}^{i-1} \frac{C(i+2)}{n(1-\omega_{i+2})^2}
\omega_{j+3}^2\cdots\omega_{i+1}^2
< \frac{C(i+2)}{n(1-\omega_{i+2})^2}
\cdot\frac{1}{1-\omega_{i+1}^2}
< \frac{C(i+2)}{n(1-\omega_{i+2})^3}.
\end{split}\eeq
We now sum this quantity over the indices $i$, treating separately the indices $i\leq n-n^{1/3}\sigma_n$ and $i\geq n-n^{1/3}\sigma_n$.  Since we care only about the order of this quantity we omit the initial constant $C$, although $c$ will show up later denoting some other constant.  For the sum over indices less that $n-n^{1/3}\sigma_n$, we get
\beq\begin{split}
\sum_{i=2}^{n-n^{1/3}\sigma_n}  \frac{i+2}{n(1-\omega_{i+2})^3}
&\leq\sum_{i=2}^{n-n^{1/3}\sigma_n} \frac{i+2}{n}\left(\frac{cn}{n-(i+2)}\right)^{3/2}\\
&=O\left(n\int_{n^{-2/3}\sigma_n}^1(1-x)x^{-3/2}dx
\right)
=O\left(n\int_{n^{-2/3}\sigma_n}^1x^{-3/2}dx\right)
=O(n^{4/3}\sigma_n^{-1/2}).
\end{split}\eeq
Meanwhile,
\beq\begin{split}
\sum_{i=n-n^{1/3}\sigma_n}^{n-2}  \frac{i+2}{n(1-\omega_{i+2})^3}
&< \sum_{i=n-n^{1/3}\sigma_n}^{n-2}  \frac{1}{(1-\omega_{i+2})^3}
\leq \sum_{i=n-n^{1/3}\sigma_n}^{n-2} (cn^{1/3}\sigma_n^{-1/2})^3\\
&=O\left(n^{1/3}\sigma_n\cdot n\sigma_n^{-3/2}\right)
=O\left(n^{4/3}\sigma_n^{-1/2}\right).
\end{split}\eeq
Putting the two sums together, we get
\beq
\|\tilde{Z}\|_{\HS}=\sqrt{\sum_{i=2}^{n-2}\sum_{j=1}^{i-1}\tilde{Z}_{ij}^2}=O(\sqrt{n^{4/3}\sigma_n^{-1/2}})=O(n^{2/3}\sigma_n^{-1/4}),
\eeq
and thus, with probability $1-o(1)$,
\beq
\mathbf{a}^T\tilde{Z}\mathbf{a}=O\left(\frac1n\cdot n^{2/3}\sigma_n^{-1/4}\nu_n\right)=O(n^{-1/3}\sigma_n^{-1/4}\nu_n)
\eeq
where $\nu_n$ is, again, some slowly growing function.

\textbf{Bound for $P_{12}$ part:}
Using the vectors defined above and the matrices $W,G,D$ from Definition \ref{def:HW_notations}, we write
	\begin{equation}
		P_{12}=2 (\bfa^{(\text{f})})^T GW D \bfb = \begin{pmatrix}
			(\bfa^{(\text{f})})^T & \bfb^T
		\end{pmatrix} \begin{pmatrix}
			O & GW D \\
			(GW D)^T & O \\
		\end{pmatrix} \begin{pmatrix}
			\bfa^{(\text{f})} \\
			\bfb
		\end{pmatrix}.
	\end{equation} 
	Since the matrix has zeros on the diagonal, $\EE P_{12}=0$. By Lemma \ref{lem:HansonWright}, with probability $1-o(1)$,
	\begin{equation}\label{eqn:step7}
		2 (\bfa^{(\text{f})})^T GW D \bfb  = O \left(\frac{\nu_n}{n} \left\| \begin{pmatrix}
			O & GW D \\
			(GW D)^T & O \\
		\end{pmatrix}\right\|_{\HS}\right) = O \left(\frac{\nu_n}{n} \|GWD\|_{\HS}\right).
	\end{equation}
	We have
	\begin{equation}\label{eqn:norm_GWD}
		\begin{split}
			\|GWD\|_{\HS}^2 
			&= \sum_{i = 1}^{n-3} \sum_{j = 1}^i (GWD)_{ij}^2
			= \sum_{i = 1}^{n-3} \sum_{j = 1}^i (g_{i+3}-1)^2 (1+\tau_{j+1})^2 \omega_{j+3}^2\dots \omega_{i+2}^2\\
			&\leq C \sum_{i = 1}^{n-3} \sum_{j = 1}^i (g_{i+3}-1)^2 \omega_{j+3}^2\dots \omega_{i+2}^2.
		\end{split}
	\end{equation}
	For indices $1 \leq i \leq n - n^{1/3}\sigma_n-3$, 
	\[
	(g_{i+3}-1)^2 \leq \frac{n }{n -(i+3)}, \quad \text{ and } \quad \frac{1}{ 1 - \omega_i} \leq \sqrt{\frac{c n }{n - (i+3)}}.
	\]
	Thus, 
	\begin{equation}
		\begin{split}
			&\sum_{i = 1}^{n - n^{1/3}\sigma_n-3}\sum_{j = 1}^i (g_{i+3}-1)^2\omega_{j+3}^2\dots \omega_{i+2}^2
			\leq \sum_{i = 1}^{n - n^{1/3}\sigma_n-3} \frac{n }{n -(i+3)} \frac{1}{1 - \omega_{i+2}}\\
			&\quad\leq c'\sum_{i = 1}^{n - n^{1/3}\sigma_n-3}\left(\frac{n }{n -(i+3)}\right)^{3/2}
			= O \left(n \int_{n^{-2/3} \sigma_n}^{1} x^{-3/2}dx \right) = O(n^{4/3}\sigma_n^{-1/2}).
		\end{split}
	\end{equation}
	
	The contribution from the remaining terms is 
	\begin{equation}
		\begin{split}
			\sum_{i = n - n^{1/3}\sigma_n-2}^{n -3}\sum_{j = 1}^i (g_{i+3}-1)^2 \omega_{j+3}^2\dots \omega_{i+2}^2
			&\leq  \sum_{i = n - n^{1/3}\sigma_n-2}^{n -3} \sum_{j = 1}^i (Cn^{1/3} \sigma_n^{-1/2})^2 \omega_{j+3}^2\dots \omega_{i+2}^2 \\
			&\leq  \sum_{i = n - n^{1/3}\sigma_n-2}^{n -3} \frac{n^{2/3} \sigma_n^{-1}}{1 - \omega_{i+3}} 
			= O(n^{4/3} \sigma_n^{-1/2}).
		\end{split}
	\end{equation}
	Thus, we get $\|GWD\|_{\HS}^2 = O(n^{4/3} \sigma_n^{-1/2})$. By \eqref{eqn:step7}, we conclude that, with probability $1-o(1)$, 
	\[
	2 (\bfa^{(\text{f})})^T GW D \bfb  = O \left(\frac{\nu_n}{n} \|GWD\|_{\HS}\right) =O \left(\frac{\nu_n}{n} \cdot n^{2/3} \sigma_n^{-1/4}  \right) =o(1).
	\]

\textbf{Bound for $P_2$ part:} We recall two facts.  First, from Lemma \ref{lem:unif_Yi}, $\max_{3\leq i\leq n}|Y_i|=o(n^{-1/3})$ with probability $1-o(1)$.  Second, $\a_2\in SG(v,u)$ with $v,u=O(n^{-1})$ so, by Lemma \ref{lem:SG_boucheron}, $\a_2=O(n^{-1/2+\e})$ with probability $1-o(1)$ for any $\e>0$.  Combining these two facts, we deduce that, with probability $1-o(1)$,
\beq
P_2=o\left(n^{-1/3}n^{-1/2+\e}\sum_{i=4}^n (g_i-1)\w_3\cdots\w_{i-1}\right).
\eeq
The above sum can be crudely bounded as 
\beq
\sum_{i=4}^n (g_i-1)\w_3\cdots\w_{i-1}<\sum_{i=4}^n (g_i-1)\w_3\w_4
=O\left(n\cdot n^{1/3}\sigma_n^{-1/2}\cdot n^{-2}\right)
=O(n^{-2/3}\sigma^{-1/2}),
\eeq
where we use Lemmas \ref{lem:gi_bounds} and \ref{lem:wi} to bound $g_i$ and the definition of $\w_i$ to bound $\w_3, \w_4$. We obtain $P_2=o(1)$.

\textbf{Bound for $P_3$ part:}  We now bound $\sum_{i=4}^n (g_i-1)(\alpha_{i-1}-\omega_3\cdots\omega_{i-1}\alpha_2)^2$. This can be expressed as a quadratic form
\beq
\mathbf{a}^TQ\mathbf{a}
\eeq
where $\mathbf{a}$ is the same vector from before and $Q$ is a symmetric matrix with non-zero entries only in the first row, first column, and along the diagonal.  More specifically, these entries are
\beq
Q_{ij}=\begin{cases}
g_{i+2}-1 & i=j\geq2,\\
-(g_{j+2}-1)\omega_3\cdots\omega_{j+1} & i=1, j\geq2,\\
-(g_{i+2}-1)\omega_3\cdots\omega_{i+1} & i\geq2, j=1,\\
-\sum_{k=2}^{n-2} (g_{k+2}-1)(\omega_3\cdots\omega_{k+1})^2 & i=j=1.
\end{cases}
\eeq
Again, by Lemma \ref{lem:HansonWright}, with probability $1-o(1)$,
\beq
\mathbf{a}^T Q\mathbf{a}-\mathbb{E}\mathbf{a}^T Q\mathbf{a}=O\left(\frac{\nu_n}{n}\| Q\|_{\HS} \right),
\eeq
where
\beq\label{eq:Qnorm_bound}\begin{split}
\| Q\|_{\HS}^2&=Q_{11}^2+\sum_{i=2}^{n-2}Q_{ii}^2+2\sum_{i=2}^{n-2}Q_{i1}^2
<\left(\omega_3^2\sum_{i=2}^{n-2} (g_{i+2}-1)\right)^2
+3\sum_{i=2}^{n-2}(g_{i+2}-1)^2\\
&< C\left(\frac{1}{n^2} \sum_{i=2}^{n-2} \frac{1}{1-\omega_{i+2}}\right)^2
+\sum_{i=2}^{n-2}\frac{1}{(1-\omega_{i+2})^2}.
\end{split}\eeq
The first sum satisfies
\beq\label{eq:sum_gi_bound}\begin{split}
\left(\frac{1}{n^2} \sum_{i=2}^{n-2} \frac{1}{1-\omega_{i+2}}\right)^2
=O\left(\left(\frac{1}{n^2}\cdot n \cdot n^{1/3}\sigma_n^{-1/2}\right)^2\right)
=o(1),
\end{split}\eeq
while the second one is 
\beq\begin{split}
\sum_{i=2}^{n-2} \frac{1}{(1-\omega_{i+2})^2}
&= \sum_{i=2}^{n-n^{1/3}\sigma_n} \frac{1}{(1-\omega_{i+2})^2}+ \sum_{n-n^{1/3}\sigma_n}^{n-2} \frac{1}{(1-\omega_{i+2})^2}\\
&=O\left(n\int_{n^{-2/3}\sigma_n}^1x^{-1}dx+ n^{1/3}\sigma_n\cdot n^{2/3}\sigma_n^{-1}\right)
=O(n\log n).
\end{split}\eeq
We conclude
\beq
\mathbf{a}^T Q\mathbf{a}-\mathbb{E}\mathbf{a}^T Q\mathbf{a}=O\left(\frac{\nu_n}{n}\sqrt{n\log n}\right)=O\left(n^{-1/2}\nu_n\sqrt{\log n}\right).
\eeq
It remains to evaluate the expectation.
\beq\begin{split}
\mathbb{E}\mathbf{a}^T Q\mathbf{a}
&=\mathbb{E}\sum_{i=4}^n(g_i-1)(\a_{i-1}-\omega_3\cdots\omega_{i-1}\a_2)^2
=\sum_{i=4}^n(g_i-1)\mathbb{E}(\a_{i-1}^2+\omega_3^2\cdots\omega_{i-1}^2\a_2^2).
\end{split}\eeq
We note that $\mathbb{E}(\a_{i-1}^2+\omega_3^2\cdots\omega_{i-1}^2\a_2^2)=O(n^{-1})$ and, in the course of the proof above (see \eqref{eq:Qnorm_bound} and \eqref{eq:sum_gi_bound}), we showed that $\frac1n \sum_{i=4}^n(g_i-1)=O(1).$  Therefore, $P_3=O(1)$ with probability $1-o(1)$.

\end{appendix}

\pagebreak
\begin{bibdiv}
\begin{biblist}

\bib{BaiSilverstein}{article}{
      author={Bai, Z.~D.},
      author={Silverstein, J.~W.},
       title={{CLT for linear spectral statistics of large-dimensional sample
  covariance matrices}},
        date={2004},
     journal={The Annals of Probability},
      volume={32},
      number={1A},
       pages={553\ndash 605},
         url={https://doi.org/10.1214/aop/1078415845},
}

\bib{BaikLeeSSK}{article}{
      author={Baik, Jinho},
      author={Lee, Ji~Oon},
       title={Fluctuations of the free energy of the spherical
  {S}herrington-{K}irkpatrick model},
        date={2016},
        ISSN={0022-4715},
     journal={J. Stat. Phys.},
      volume={165},
      number={2},
       pages={185\ndash 224},
         url={https://doi.org/10.1007/s10955-016-1610-0},
      review={\MR{3554380}},
}

\bib{BaikLeeBipartite}{article}{
      author={Baik, Jinho},
      author={Lee, Ji~Oon},
       title={Free energy of bipartite spherical {S}herrington--{K}irkpatrick
  model},
        date={2020},
     journal={Ann. Inst. Henri Poincar\'e Prob. Stat.},
      volume={56},
      number={4},
       pages={2897\ndash 2934},
      review={\MR{4164860}},
}

\bib{boucheron2013concentration}{book}{
      author={Boucheron, S.},
      author={Lugosi, G.},
      author={Massart, P.},
       title={Concentration inequalities: A nonasymptotic theory of
  independence},
   publisher={Oxford University Press},
        date={2013},
        ISBN={9780191747106},
         url={https://books.google.com/books?id=O3yoAQAACAAJ},
}

\bib{deift1999}{book}{
      author={Deift, Percy},
       title={Orthogonal polynomials and random matrices: A riemann-hilbert
  approach},
      series={Courant Lecture Notes in Mathematics},
   publisher={Courant Institute of Mathematical Sciences, New York University},
        date={1999},
        ISBN={9780821883440},
         url={https://books.google.com/books?id=SBR8yv0LkFgC},
}

\bib{DumitriuEdelman}{article}{
      author={Dumitriu, Ioana},
      author={Edelman, Alan},
       title={Matrix models for beta ensembles},
        date={2002},
     journal={Journal of Mathematical Physics},
      volume={43},
}

\bib{Etemadi}{article}{
      author={Etemadi, N.},
       title={On some classical results in probability theory},
        date={1985},
        ISSN={0581-572X},
     journal={Sankhy\={a} Ser. A},
      volume={47},
      number={2},
       pages={215\ndash 221},
      review={\MR{844022}},
}

\bib{ForresterRains}{article}{
      author={Forrester, Peter},
      author={Rains, Eric},
       title={Interrelationships between orthogonal, unitary and symplectic
  matrix ensembles},
        date={2001},
     journal={Random matrix models and their applications},
      volume={40},
}

\bib{GSS_HansonWright}{article}{
      author={G{\"o}tze, Friedrich},
      author={Sambale, Holger},
      author={Sinulis, Arthur},
       title={Concentration inequalities for polynomials in
  $\alpha$-sub-exponential random variables},
        date={2021},
     journal={Electronic Journal of Probability},
      volume={26},
       pages={1\ndash 22},
         url={https://doi.org/10.1214/21-EJP606},
}

\bib{GotzeTikhomirov}{article}{
      author={G{\"o}tze, Friedrich},
      author={Tikhomirov, Alexander},
       title={The rate of convergence for spectra of {GUE} and {LUE} matrix
  ensembles},
        date={2005/12/01},
     journal={Central European Journal of Mathematics},
      volume={3},
      number={4},
       pages={666\ndash 704},
         url={https://doi.org/10.2478/BF02475626},
}

\bib{Johansson_linstat}{article}{
      author={Johansson, Kurt},
       title={{On fluctuations of eigenvalues of random {H}ermitian matrices}},
        date={1998},
     journal={Duke Mathematical Journal},
      volume={91},
      number={1},
       pages={151\ndash 204},
         url={https://doi.org/10.1215/S0012-7094-98-09108-6},
}

\bib{JKOP1}{article}{
      author={Johnstone, Iain~M.},
      author={Klochkov, Yegor},
      author={Onatski, Alexei},
      author={Pavlyshyn, Damian},
       title={An edge {CLT} for the log determinant of {W}igner ensembles},
        date={2020},
     journal={arXiv preprint arXiv:2011.13723},
}

\bib{JKOP2}{article}{
      author={Johnstone, Iain~M.},
      author={Klochkov, Yegor},
      author={Onatski, Alexei},
      author={Pavlyshyn, Damian},
       title={Spin glass to paramagnetic transition in spherical
  {S}herrington-{K}irkpatrick model with ferromagnetic interaction},
        date={2021},
     journal={arXiv preprint arXiv:2104.07629},
}

\bib{JohnstoneOnatskiSpiked}{article}{
      author={Johnstone, Iain~M.},
      author={Onatski, Alexei},
       title={Testing in high-dimensional spiked models},
        date={2020},
     journal={Annals of Statistics},
      volume={48},
      number={3},
       pages={1231\ndash 1254},
         url={https://doi.org/10.1214/18-AOS1697},
}

\bib{lambertpaquette}{article}{
      author={Lambert, Gaultier},
      author={Paquette, Elliot},
       title={Strong approximation of {G}aussian $\beta$-ensemble
  characteristic polynomials: the edge regime and the stochastic {A}iry
  function},
        date={2021},
     journal={arXiv preprint arXiv:2009.05003},
}

\bib{Landon_crit}{article}{
      author={Landon, Benjamin},
       title={Free energy fluctuations of the two-spin spherical {SK} model at
  critical temperature},
        date={2022},
     journal={Journal of Mathematical Physics},
      volume={63},
      number={3},
         url={https://doi.org/10.1063/5.0054298},
}

\bib{Ma2012}{article}{
      author={Ma, Zongming},
       title={{Accuracy of the Tracy–Widom limits for the extreme eigenvalues
  in white Wishart matrices}},
        date={2012},
     journal={Bernoulli},
      volume={18},
      number={1},
       pages={322 \ndash  359},
         url={https://doi.org/10.3150/10-BEJ334},
}

\bib{Marcenko_1967}{article}{
      author={Mar{\v{c}}enko, V.~A.},
      author={Pastur, L.~A.},
       title={Distribution of eigenvalues for some sets of random matrices},
        date={1967},
     journal={Mathematics of the {USSR}-{S}bornik},
      volume={1},
      number={4},
       pages={457\ndash 483},
         url={https://doi.org/10.1070/sm1967v001n04abeh001994},
}

\bib{Nguyen_2014}{article}{
      author={Nguyen, Hoi~H.},
      author={Vu, Van},
       title={Random matrices: Law of the determinant},
        date={2014},
     journal={The Annals of Probability},
      volume={42},
      number={1},
}

\bib{Ram_rez_2011}{article}{
      author={Ram{\'{\i}}rez, Jos{\'{e}}},
      author={Rider, Brian},
      author={Vir{\'{a}}g, B{\'{a}}lint},
       title={Beta ensembles, stochastic {A}iry spectrum, and a diffusion},
        date={2011},
     journal={Journal of the American Mathematical Society},
      volume={24},
      number={4},
       pages={919\ndash 944},
}

\bib{Rio2009}{article}{
      author={Rio, Emmanuel},
       title={Moment inequalities for sums of dependent random variables under
  projective conditions},
        date={2009},
     journal={Journal of Theoretical Probability},
      volume={22},
       pages={146\ndash 163},
}

\bib{szego}{book}{
      author={Szeg{\H{o}}, G.},
       title={Orthogonal polynomials},
      series={American Math. Soc: Colloquium publ},
   publisher={American Mathematical Society},
        date={1939},
        ISBN={9780821810231},
         url={https://books.google.com/books?id=ZOhmnsXlcY0C},
}

\bib{TaoVu2012}{article}{
      author={Tao, Terence},
      author={Vu, Van},
       title={A central limit theorem for the determinant of a {W}igner
  matrix},
        date={2012},
        ISSN={0001-8708},
     journal={Advances in Mathematics},
      volume={231},
      number={1},
       pages={74\ndash 101},
  url={https://www.sciencedirect.com/science/article/pii/S0001870812001806},
}

\end{biblist}
\end{bibdiv}

\end{document}